\def\TLp#1{\mathrm{TL}^{#1}}
\def\dTLp#1{d_{\TLp{#1}}}
\def\Lp#1{\mathrm{L}^{#1}}
\def\Wp#1{\mathrm{W}^{#1}}
\def\dWp#1{d_{\Wp{#1}}}
\def\Ck#1{\mathrm{C}^{#1}}
\def\Ckc#1{\Ck{#1}_{\mathrm{c}}}
\def\Wkp#1#2{\mathrm{W}^{#1,#2}}
\def\spaceBar{\, | \,}
\def\EnergyS{\cE^{(s)}_\infty}
\def\EnergySneps{\cE_{n,\eps}^{(s)}}
\def\EnergySnepstrunc{\cE_{n,\eps,\text{trunc}}^{(s)}}
\def\EnergySnepsn{\cE_{n,\eps_n}^{(s)}}
\def\EnergySnepsntrunc{\cE_{n,\eps_n,\text{trunc}}^{(s)}}
\def\ceil#1{\left\lfloor{#1}\right\rfloor}
\def\dTorus{d_{\mathrm{man}}} 
\def\addmaths#1{\mathbf{\textcolor{darkgreen}{#1}}}
\def\removemaths#1{\mathbf{\textcolor{darkred}{\hbox{\st{$#1$}}}}}
\def\commentOut#1{}
\title{Consistency of Fractional Graph-Laplacian Regularization in Semi-Supervised Learning with Finite Labels}
\author[1]{Adrien Weihs}
\author[1,2]{Matthew Thorpe}
\date{July 2023}
\affil[1]{School of Mathematics,\protect\\ University of Manchester,\protect\\ Manchester, M13 9PL, UK. \vspace{\baselineskip}}
\affil[2]{The Alan Turing Institute\protect\\ London, NW1 2DB}
\begin{document}

\maketitle

\begin{abstract}
Laplace learning is a popular machine learning algorithm for finding missing labels from a small number of labelled feature vectors using the geometry of a graph.
More precisely, Laplace learning is based on minimising a graph-Dirichlet energy, equivalently a discrete Sobolev $\Wkp{2}{1}$ semi-norm, constrained to taking the values of known labels on a given subset.
The variational problem is asymptotically ill-posed as the number of unlabeled feature vectors goes to infinity for finite given labels due to a lack of regularity in minimisers of the continuum Dirichlet energy in any dimension higher than one.
In particular, continuum minimisers are not continuous.
One solution is to consider higher-order regularisation, which is the analogue of minimising Sobolev $\Wkp{s}{2}$ semi-norms.
In this paper we consider the asymptotics of minimising a graph variant of the Sobolev $\Wkp{s}{2}$ semi-norm with pointwise constraints.
We show that, as expected, one needs $s>d/2$ where $d$ is the dimension of the data manifold.
We also show that there must be an upper bound on the connectivity of the graph; that is, highly connected graphs lead to degenerate behaviour of the minimiser even when $s>d/2$.
\end{abstract}

\keywords{fractional Laplacian, non-parametric regression, semi-supervised learning, asymptotic consistency, PDEs on graphs, nonlocal variational problems}

\subjclass{49J55, 49J45, 62G20, 65N12}

\section{Introduction} \label{sec:Intro}

A typical question in machine learning is the one of asymptotic consistency.
Suppose that a machine learning algorithm is formulated as a variational problem depending on a parameter $\eps$, on a set of $n$ feature vectors $\Omega_n = \{x_i\}_{i=1}^n\subset \bbR^d$ where we assume that $x_i\iid \mu\in\cP(\Omega)$: the algorithm aims to minimize a discrete objective $\cE_{n,\eps}(u_n)$ over $u_n:\Omega_n \mapsto \bbR^m$.
It is a natural problem to consider the asymptotics of the minimizers of $\cE_{n,\eps}(\cdot)$: in what sense and to what do the latter converge as $n \to \infty$ and $\eps_n = \eps \to 0$?
Furthermore, by taking the appropriate limit of the objectives $\cE_{n,\eps}(\cdot)$, we obtain a limiting continuum objective $\cE_{\infty}(\cdot)$ which is defined for functions $u:\Omega \subseteq \bbR^d \mapsto \bbR^m$: it is also relevant to ask how the minimizers of $\cE_\infty(\cdot)$ relate to the limit of the minimizers of $\cE_{n,\eps}(\cdot)$. Answering these questions allows one to gain rigorous insights in the design of machine learning algorithms.

The specific problem we consider throughout the paper is the regression problem associated to a higher-order variant of Laplace learning. In order to develop intuition, we start by introducing Laplace learning \cite{LapRef}.
Given labels $\{\ell_i\}_{i=1}^N \subset \{0,1\}$ and feature vectors $\{x_i\}_{i=1}^n$, semi-supervised algorithms aim to find the missing labels for feature vectors $\{x_i\}_{i=N+1}^n$.
On an undirected graph $G_{n,\eps} = (\Omega_n,W_{n,\eps})$ with vertices $\Omega_n$ and edge-weight matrix $W_{n,\eps}=(w_{\eps,ij})_{i,j=1}^n$, the Laplace learning method computes the missing labels by solving the variational problem

\begin{align} \label{eq:Intro:LaplaceLearning}
\begin{split}
& u_n \in \argmin_{u_n:\Omega_n\to \bbR} \cE_{n,\eps}(u_n) \quad \text{such that } u_n(x_i) = \ell_i \text{ for } i \leq N \\
& \text{where } \cE_{n,\eps}(u_n) = \frac{1}{2}\sum_{i,j = 1}^n w_{\eps,ij} \l u_n(x_i) - u_n(x_j)\r^2.
\end{split}
\end{align}
The motivation behind this formulation is that vertices $x_i$ and $x_j$ that are close in the graph --- i.e. $w_{\eps,ij}$ is large --- should have similar labels: we impose a discrete regularity requirement on $u_n$.
From the latter equation we note that $u_n(x_i)$ will take values in $\bbR$ whereas, for our purposes of binary classification, we have labels $\ell_i\in\{0,1\}$.
Therefore, in order to classify our vertices, the proposed classification rule for $N < i \leq n$ is:
\begin{equation} \label{eq:Intro:LapThresh}
\hat{\ell}_i = \begin{cases} 0 \quad \text{if } u_n(x_i) < 0.5 \\ 1 \quad \text{else.} \end{cases}
\end{equation}

Laplace learning is defined, via~\eqref{eq:Intro:LaplaceLearning} and~\eqref{eq:Intro:LapThresh}, as a variational problem given the graph $G_{n,\eps}$.
In some applications, rather than being given a graph, one has to define the edge-weight matrix $W_{n,\eps}$.
Since our aim will be to study asymptotic consistency with $n \to \infty$, we need to define $W_{n,\eps}$ in a scalable way.
A natural construction is to use the feature vectors and define $w_{\eps,ij}$ as follows:
\begin{equation} \label{eq:Intro:Weights}
w_{\eps,ij} = \frac{1}{\eps^d}\eta\l\frac{\vert x_i - x_j \vert}{\eps}\r
\end{equation}
for a non-decreasing function $\eta:[0,\infty) \mapsto [0,\infty)$. A typical choice of $\eta$ would be $\eta = \one_{[0,1]}$, the indicator function of the set $[0,1]$: this ensures that whenever $x_i$ and $x_j$ are further apart than $\eps$, we have $w_{\eps,ij} = 0$.
Defining the edge-weights by~\eqref{eq:Intro:Weights} has two advantages. 

On the geometrical side, this allows us to link the intrinsic geometry defined in the graph with the extrinsic Euclidean geometry from the ambient space.
In particular, this will allow for closeness in the graph to be linked to closeness in Euclidean distance and thus, the weighted finite differences $w_{\eps,ij}\vert u(x_i) - u(x_j)\vert^2$ can be approximated by $\vert \nabla u(x_i)\vert^2$ when $u:\Omega\to\bbR$ is sufficiently smooth. 

On the asymptotic side, as we let $n$ tend to infinity, which means that we increase the number of vertices in the graph, it is then natural to let $\eps=\eps_n$ tend to 0 as there is increasingly more local information available at each point which allows one to resolve the geometry in the graph at finer scales. Since the numerical cost often correlates with the number of neighbours (or the density of the matrix $W_{n,\eps}$), scaling $\eps_n\to 0$ also has the advantage of decreasing computation time.
From an analysis point-of-view scaling $\eps_n\to 0$ allows us to replace the discrete objective $\cE_{n,\eps_n}$ based on finite differences with a continuum objective $\cE_{\infty}$ based on derivatives. For example, up to appropriate scaling which is detailed in \eqref{eq:Back:Rel:pLapDisProb}, the continuum energy associated to Laplace learning using weights defined in \eqref{eq:Intro:Weights} is a $\Wkp{1}{2}$ semi-norm. We recall that for $1 \leq k \in \bbN$, $p \geq 1$, $\alpha$ a multi-index and $u \in \Wkp{k}{p}(\Omega)$, the norm and semi-norm of $\Wkp{k}{p}(\Omega)$ can be defined as $\Vert u \Vert_{\Wkp{k}{p}} =  \Vert u \Vert_{\Lp{p}} + \sum_{1 \leq \vert \alpha \vert \leq k } \Vert \partial^\alpha u \Vert_{\Lp{p}}$ and $\sum_{\vert \alpha \vert = k } \Vert \partial^\alpha u \Vert_{\Lp{p}}$ respectively.

While we will rigorously fix notations in Section \ref{subsec:Main:Not}, we now introduce the objective functions that we will be considering. 
Given the graph Laplacian $\Delta_{n,\eps}$ defined in \eqref{eq:Main:Not:Setting:Deltaneps} and the discrete $\Lp{2}(\Omega_n)$ inner product $\langle\cdot,\cdot\rangle_{\Lp{2}(\Omega_n)}$ defined in \eqref{eq:Main:Not:Setting:InnerProduct}, we define the fractional Laplacian energy \cite{Stuart} for $u_n:\Omega_n \mapsto \bbR$ and $s > 0$:
\[
\EnergySneps(u_n) = \langle u_n, \Delta_{n,\eps}^s u_n \rangle_{\Lp{2}(\Omega_n)}.
\]
The associated continuum energy is an approximate $\Wkp{s}{2}$ semi-norm (as $s \in \bbR$, the fractional Sobolev semi-norms are defined differently to the above and we refer to Remarks \ref{rem:Main:Res:Sobolev} and \ref{rem:Main:Res:Approximating} for further discussions) for $u \in \cH^s(\Omega)$ where $\cH^s$ is defined in \eqref{eq:Main:Not:Setting:H}:
\[
\EnergyS(u) = \langle u, \Delta_\rho^s u \rangle_{\Lp{2}(\Omega)},
\]
and $\Delta_\rho$ is the continuum weighted Laplacian defined in \eqref{eq:Main:Not:WeightLap}. For $s = 1$, we recover the discrete and continuum energies of Laplace learning.

The underlying question when studying asymptotic consistency is the one of the appropriate notion of convergence.
For a function $u:\Omega \mapsto \bbR^m$, we define $u\lfloor_{\Omega_n}$ to be the restriction of $u$ onto the sample points $\Omega_n=\{x_1,\hdots,x_n\}$.
The first type of convergence that has been analysed in the literature is concerned with the notion of \emph{pointwise} convergence of energies: for $u:\Omega \mapsto \bbR^m$, one is interested if $\cE_{n,\eps}(u\lfloor_{\Omega_n}) \to \cE_{\infty}(u)$ as $n\to \infty$ and $\eps_n \to 0$ at an appropriate rate.
This has for example been studied in \cite{NIPS2006_5848ad95}, \cite{COIFMAN20065}, ~\cite{Gine}, \cite{10.1007/11503415_32}, \cite{10.1007/11776420_7}, \cite{Singer} and~\cite{10.5555/3104322.3104459}.
Another convergence type is \emph{spectral} convergence, where one analyses the convergence of the eigenpairs of the discrete operator to the continuum one. 
Convergence of eigenvalues has for example been studied in \cite{NIPS2006_5848ad95}, \cite{Trillos} and \cite{CALDER2022123}.
Convergence of eigenvectors is more subtle and we return to this below.

Usually one really wants to establish convergence of minimisers; that is, if $u_n$ is the constrained minimiser of $\cE_{n,\eps_n}^{(s)}$ and $u_\infty$ is the constrained minimiser of $\cE_{\infty}^{(s)}$ then does $u_n$ converge to $u_\infty$?
Since $u_n$ is a discrete function defined on $\Omega_n$ and $u_\infty$ is a continuum function defined on $\Omega$ we need to define a notion of convergence that allows one to compare functions on different domains.
There are two natural ways to do this.
In the first we restrict $u_\infty$ to the domain $\Omega_n$ and compare to $u_n$, i.e. we treat $u_\infty\lfloor_{\Omega_n} - u_n$ which is a function from $\Omega_n\to \bbR^m$.
However, note that we must have sufficient regularity of $u_\infty$ in order to define it's restriction to $\Omega_n$.
There are certain settings where one has enough regularity of the continuum minimiser, e.g. for $p$-Laplacian regularisation, in order to be able to define pointwise evaluation, see for example \cite{Slepcev}, \cite{calder2020rates}, \cite{CALDER2022123}, and~\cite{FLORES202277}. 

When one doesn't have enough regularity to define $u_\infty\lfloor_{\Omega_n}$, we need to consider another approach. 
The second notion of convergence, rather than restricting $u_\infty$ to $\Omega_n$, extends $u_n$ to $\Omega$.
One way to define this extension is via an optimal transport map $T_n:\Omega\to\Omega_n$ between the empirical measure $\mu_n=\frac{1}{n}\sum_{i=1}^n \delta_{x_i}$ and the continuum measure $\mu$.
This defines a partitioning of the state space $\Omega$. 
The extension of $u_n$ to $\Omega$ can then be defined by $\tilde{u}_n=u_n\circ T_n:\Omega\to\bbR^m$.
One can then compare $\tilde{u}_n-u_\infty$.
If one uses an $\Lp{p}$ norm on $\tilde{u}_n-u_\infty$ then this defines the $\TLp{p}$ distance, introduced by~\cite{GARCIATRILLOS2018239} and reviewed in Section~\ref{subsec:Back:TLp}, which can be viewed as a metric in an appropriate space.
In this work it is the latter notion of convergence, i.e. $\TLp{p}$, that we consider.

We point out that, since eigenvectors of $\cE_{n,\eps}^{(s)}$ are discrete functions and eigenvectors of $\cE_{\infty}^{(s)}$ are continuum functions, the convergence of eigenvectors uses one of the two notions of convergence described above.
In particular, uniform convergence (which falls into the first type of convergence) was studied in \cite{10.1214/009053607000000640}, \cite{JMLR:v12:pelletier11a}, \cite{10.1093/imaiai/iaw016}, \cite{https://doi.org/10.48550/arxiv.1510.08110} and \cite{CALDER2022123},  and $\TLp{p}$ (or related) convergence was studied in \cite{GARCIATRILLOS2018239} and \cite{Trillos}. 

Having described the numerous types of convergence at our disposal, we now come back to our question of interest: when do constrained minimizers of $\cE_{n,\eps_n}^{(s)}(\cdot)$ converge to the constrained minimizers of $\cE_{\infty}^{(s)}(\cdot)$?  
Our answer is given in Theorem \ref{thm:Main:Res:ConsFracLap} and is broken into two cases: the \emph{ill-posed} case where we provide conditions on the scaling $\eps_n\to 0$ (depending on $s$ and $\dim(\Omega)$) for constrained minimizers of $\cE_{n,\eps_n}^{(s)}$ to converge to constants, and the \emph{well-posed} case where we provide conditions on the scaling of $\eps_n\to 0$ for constrained minimizers of $\cE_{n,\eps_n}^{(s)}$ to converge to constrained minimizers of $\cE_{\infty}^{(s)}$.
Conditions for the ill-posed case in a closely related setting can be found in \cite{Stuart}.
Our main contribution is to add conditions for the well-posed case.

The paper is set out as follows.
In the next section we present some background material, including a description of the metric space $\TLp{p}$ in which we work, an overview of $\Gamma$-convergence, and a brief description of related works.
The main results are presented in Section~\ref{sec:Main} along with our assumptions and notation.
The proofs are given in Section~\ref{sec:Proofs}.
There is a small gap in the rate at which $\eps_n\to 0$ that is not covered by our theoretical results, and so we provide some numerical experiments in Section~\ref{sec:NumExp} to give insights into what can be expected in this regime.

\section{Background}

In this section we review some background material.
Namely, we include a description of the $\TLp{p}$ topology, a brief overview of $\Gamma$-convergence, and a short review of related works.

\subsection{The \texorpdfstring{$\TLp{p}$}{TLp} Space} \label{subsec:Back:TLp}

Let $\cP(\Omega)$ be the set of probability measures on $\Omega$ and $\cP_p(\Omega)$ be the set of probability measures on $\Omega$ with finite $p$th-moment.
The set of functions $u$ that are measurable with respect to $\mu$ and such that $\int_{\Omega} \vert u(x)\vert^p \, \dd \mu(x) < +\infty$ is denoted by $\Lp{p}(\mu)$ .
The pushforward of a measure $\mu\in\cP(\Omega)$ by a map $T:\Omega\to \cZ$ is the measure $\nu\in\cP(\cZ)$ defined by
\[ \nu(A) = T_{\#} \mu(A) := \mu(T^{-1}(A)) = \mu\l \lb x\spaceBar T(x)\in A \rb\r \qquad \text{for all measurable sets } A. \]
For $\mu,\nu\in\cP_p(\Omega)$ we denote by $\Pi(\mu,\nu)$ the set of all probability measures on $\Omega\times\Omega$ such that the first marginal is $\mu$ and the second marginal is $\nu$, i.e. $(P_X)_{\#}\pi=\mu$ and $(P_Y)_{\#}\pi=\nu$ where $P_X: \Omega\times\Omega \ni (x,y) \mapsto x \in \Omega$ and $P_Y: \Omega\times\Omega \ni (x,y) \mapsto y \in \Omega$.
We start by recalling the definition of the $\TLp{p}$ space and metric from~\cite{Trillos3}.

\begin{mydef}
For an underlying domain $\Omega$, define the set
\[ \TLp{p} = \lb (\mu,u) \spaceBar \mu \in \cP_p(\Omega), u \in \Lp{p}(\mu) \rb. \]
For $(\mu,u),(\nu,v) \in \TLp{p}$, we define the $\TLp{p}$ distance $\dTLp{p}$ as follows:
\[ \dTLp{p}((\mu,u),(\nu,v)) = \inf_{\pi \in \Pi(\mu,\nu)} \int_{\Omega \times \Omega} \vert x-y \vert^p + \vert u(x) - v(y) \vert^p \,\dd\pi(x,y). \]
\end{mydef} 

The key property of the $\TLp{p}$ distance is that $\dTLp{p}((\mu,u),(\nu,v))$ is equal to the $p$-Wasserstein distance between the measures $\mu$ and $\nu$ raised to the graphs of $u$ and $v$, i.e. $\dTLp{p}((\mu,u),(\nu,v))=\dWp{p}((\Id\times u)_{\#}\mu,(\Id\times v)_{\#}\nu)$ where $\dWp{p}$ is the $p$-Wasserstein distance, see~\cite{Trillos3}.
This is useful, since it allows one to leverage the properties of the well-studied Wasserstein distances.
We refer to \cite{villani2009,Santambrogio} for a treatment of optimal transport and Wasserstein distances.

We will say that a sequence $\{(\mu_n,u_n)\}_{n\in\bbN}\subset \TLp{p}$ converges in $\TLp{p}$ to some $(\mu,u)\in\TLp{p}$ if
\[ \dTLp{p}((\mu_n,u_n),(\mu,u))\to 0. \]
As shown in~\cite{Trillos3}, by leveraging the connection between the $\TLp{p}$ distance and the $p$-Wasserstein distance we have the following equivalent notions of convergence. A version of the following proposition can also be stated for non-absolutely continuous measures.

\begin{proposition}
\label{prop:Back:TLp}
Let $(\mu,u) \in \TLp{p}$ where $\mu$ is absolutely continuous with respect to Lebesgue measure and let $\{(\mu_n,u_n)\}_{n=1}^\infty$ be a sequence in $\TLp{p}$. The following are equivalent:
\begin{enumerate}
    \item $(\mu_n,u_n)$ converges to $(\mu,u)$ in $\TLp{p}$;
    \item $\mu_n$ converges weakly to $\mu$ and there exists a sequence of transport maps $\{T_n\}_{n=1}^\infty$ with $(T_{n})_\# \mu = \mu_n$ and $\int_\Omega \vert x - T_n(x) \vert \, \dd \mu(x) \to 0$ such that
    \[
    \int_\Omega \vert u(x) - u(T_n(x))\vert^p \, \dd \mu(x) \to 0;
    \]
    \item $\mu_n$ converges weakly to $\mu$ and for any sequence of transport maps $\{T_n\}_{n=1}^\infty$ with $(T_{n})_\#\mu = \mu_n$ and $\int_\Omega \vert x - T_n(x) \vert \, \dd \mu(x) \to 0$, we have
    \[
    \int_\Omega \vert u(x) - u(T_n(x))\vert^p \, \dd \mu(x) \to 0.
    \]
    
\end{enumerate}

\end{proposition}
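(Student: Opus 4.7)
The plan is to establish the cycle (2) $\Rightarrow$ (1) $\Rightarrow$ (3) $\Rightarrow$ (2). The central tool is the identity $\dTLp{p}((\mu,u),(\nu,v)) = \dWp{p}((\Id\times u)_\#\mu,(\Id\times v)_\#\nu)$ recalled just above the statement, which reduces every step to a question about Wasserstein convergence of the lifted measures on $\Omega\times\bbR$.

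The implication (2) $\Rightarrow$ (1) is immediate: given $T_n$ as in (2), the coupling $\pi_n = (\Id,T_n)_\#\mu \in \Pi(\mu,\mu_n)$, plugged into the infimum defining $\dTLp{p}$, produces the upper bound $\int(\vert x-T_n(x)\vert^p + \vert u(x)-u_n(T_n(x))\vert^p)\,\dd\mu$, which vanishes by hypothesis. For (3) $\Rightarrow$ (2), since $\mu$ is absolutely continuous, classical optimal-transport theory produces transport maps $T_n^{\ast}$ with $(T_n^{\ast})_\#\mu = \mu_n$ whose cost $\int\vert x-T_n^{\ast}(x)\vert\,\dd\mu$ vanishes because $\mu_n\to\mu$ in $\mathrm{W}^1$ (Wasserstein convergence of the base marginals is inherited from the lifted convergence via the $1$-Lipschitz projection onto $\Omega$). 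Applying (3) to $T_n^{\ast}$ yields (2).

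The main difficulty is (1) $\Rightarrow$ (3): upgrading ``some transport maps work'' to ``every admissible sequence works''. Weak convergence $\mu_n \to \mu$ follows by the same projection argument. Given an arbitrary $T_n$ with $(T_n)_\#\mu = \mu_n$ and $\int \vert x - T_n(x)\vert\,\dd\mu\to 0$, fix $\delta>0$ and pick $\tilde u \in C(\bar\Omega)$ with $\|u-\tilde u\|_{\Lp{p}(\mu)}<\delta$ (density of continuous functions in $\Lp{p}(\mu)$). The triangle inequality in $\Lp{p}(\mu)$ splits
\[
\|u - u_n\circ T_n\|_{\Lp{p}(\mu)} \leq \|u-\tilde u\|_{\Lp{p}(\mu)} + \|\tilde u - \tilde u\circ T_n\|_{\Lp{p}(\mu)} + \|\tilde u\circ T_n - u_n\circ T_n\|_{\Lp{p}(\mu)}.
\]
The first term is $<\delta$; the second vanishes because $\tilde u$ is uniformly continuous on the compact $\bar\Omega$ while $\vert x - T_n(x)\vert\to 0$ in $\mu$-measure (pointwise a.e.\ along a subsequence, then dominated convergence); and the third equals $\|\tilde u - u_n\|_{\Lp{p}(\mu_n)}$ by change of variables.

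To control this last term, extract from (1) a \emph{second} sequence of transport maps $S_n$: absolute continuity of $\mu$ ensures that optimal couplings realising $\dTLp{p}((\mu,u),(\mu_n,u_n))\to 0$ concentrate on maps $S_n$ with $(S_n)_\#\mu = \mu_n$ along which simultaneously $\int \vert x - S_n(x)\vert^p\,\dd\mu\to 0$ \emph{and} $\int\vert u(x) - u_n(S_n(x))\vert^p\,\dd\mu\to 0$. A second triangle inequality, now with $S_n$ in place of $T_n$, bounds $\|\tilde u - u_n\|_{\Lp{p}(\mu_n)} = \|\tilde u\circ S_n - u_n\circ S_n\|_{\Lp{p}(\mu)}$ by $\|\tilde u - u\|_{\Lp{p}(\mu)} < \delta$ plus two vanishing terms. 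Sending $\delta\to 0$ closes the argument. The subtle point is precisely this self-bootstrapping step: one must leverage (1) to produce an auxiliary map $S_n$ through which the \emph{unrelated} map $T_n$ is controlled, and the absolute continuity of $\mu$ is essential to ensure that optimal plans reduce to maps.
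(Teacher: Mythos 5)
The paper does not give its own proof of this proposition: it is quoted from the reference \cite{Trillos3} (García Trillos and Slepčev) with the remark ``As shown in [Trillos3]...''. So there is no paper proof to compare against, and your argument is evaluated as a free-standing one. Your overall plan (the cycle $(2)\Rightarrow(1)\Rightarrow(3)\Rightarrow(2)$, hinging on the identity $\dTLp{p}((\mu,u),(\nu,v))=\dWp{p}((\Id\times u)_{\#}\mu,(\Id\times v)_{\#}\nu)$) is the right one and matches the strategy in the cited reference, and the implications $(2)\Rightarrow(1)$ and $(3)\Rightarrow(2)$ are essentially correct.

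There is, however, a genuine gap in $(1)\Rightarrow(3)$. You assert that, because $\mu$ is absolutely continuous, ``optimal couplings realising $\dTLp{p}((\mu,u),(\mu_n,u_n))\to 0$ concentrate on maps $S_n$''. This does not follow. The source measure in the lifted Wasserstein problem is $(\Id\times u)_{\#}\mu$, and since it is supported on the graph of $u$ (a Lebesgue-null subset of $\Omega\times\bbR$), it is singular in $\Omega\times\bbR$, so the Brenier/Gangbo--McCann theorem giving existence of optimal maps is not applicable. Equivalently, viewing the lifted problem as a Monge--Kantorovich problem on $\Omega\times\Omega$ with the cost $c(x,y)=\vert x-y\vert^p+\vert u(x)-u_n(y)\vert^p$, the regularity or twist conditions that ensure plans reduce to maps fail for generic $u\in\Lp{p}$. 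The good news is that your bound does not need $S_n$ to be a map at all. From (1) you obtain a sequence of couplings $\pi_n\in\Pi(\mu,\mu_n)$ with $\int(\vert x-y\vert^p+\vert u(x)-u_n(y)\vert^p)\,\dd\pi_n\to 0$, and the critical term can be estimated directly against the plan:
\[
\Vert\tilde u - u_n\Vert_{\Lp{p}(\mu_n)}^p = \int\vert\tilde u(y)-u_n(y)\vert^p\,\dd\pi_n(x,y) \leq 3^{p-1}\left(\int\vert\tilde u(y)-\tilde u(x)\vert^p\,\dd\pi_n + \Vert\tilde u - u\Vert_{\Lp{p}(\mu)}^p + \int\vert u(x)-u_n(y)\vert^p\,\dd\pi_n\right),
\]
where each term is handled exactly as in your sketch (uniform continuity of $\tilde u$ with $\int\vert x-y\vert^p\,\dd\pi_n\to 0$, the choice of $\tilde u$, and (1), respectively). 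Replacing your map $S_n$ by the plan $\pi_n$ throughout closes the argument. A second, more minor issue: in $(3)\Rightarrow(2)$ you justify $\mu_n\to\mu$ in $\mathrm{W}^1$ by projecting the ``lifted convergence'', but in that implication you only have the weak convergence assumed in (3), not $\TLp{p}$ convergence; the correct justification is that on a bounded domain (implicit in this setting) weak convergence implies $\mathrm{W}^1$ convergence directly.
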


In our work, $\mu_n$ will denote the empirical measures of our samples $\{x_i\}_{i=1}^n$ and $\mu$ will be the measure from which the points are sampled. 
Furthermore, $\{u_n\}_{n=1}^\infty$ and $u$  will respectively be the minimizers of our discrete objectives $\cE_{n,\eps_n}(\cdot)$ and continuum objective $\cE_{\infty}(\cdot)$.
Since (in our setting with probability one) $\mu_n\weakstarto \mu$ then with a small abuse of notation we say that $u_n$ converges to $u$ in $\TLp{p}$ if $(\mu_n,u_n)$ converges to $(\mu,u)$ in $\TLp{p}$.

We recall the following result, the proof of which is a simple consequence of~\cite[Theorem 2]{Trillos} (in the special case where $\Omega$ is a torus) with the Borel--Cantelli Lemma, for use in our analysis.

\begin{theorem}
\label{thm:Back:TLp:LinftyMapsRate}
Existence of transport maps.
Assume that $\Omega$ is a torus, $x_i\iid\mu\in\cP(\Omega)$ where $\mu$ has a density that is bounded above and below by positive constants.
Then, there exists a constant $C > 0$ such that $\bbP$-a.s., there exists a sequence of transport maps $\{T_n:\Omega \mapsto \Omega_n \}_{n=1}^\infty$ from $\mu$ to $\mu_n$ such that:
\begin{equation}
\label{eq:Back:TLp:LinftyMapsRate}
\begin{cases}
\limsup_{n \to \infty} \frac{n^{1/2} \Vert \Id - T_n \Vert_{\Lp{\infty}} }{\log(n)^{3/4}} \leq C & \text{if } d = 2; \\
\limsup_{n \to \infty} \frac{n^{1/d} \Vert \Id - T_n \Vert_{\Lp{\infty}} }{\log(n)^{1/d}} \leq C &\text{if } d \geq 3.
\end{cases}
\end{equation}
\end{theorem}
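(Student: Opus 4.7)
The plan is to obtain the almost-sure $\limsup$ bound by applying the high-probability transport map estimate of \cite{Trillos} at each scale $n$ and then upgrading the resulting ``with high probability'' statement to an almost-sure statement via the Borel--Cantelli Lemma. Nothing new needs to be proved about the geometry of the transport problem; the work is entirely in bookkeeping the tail probabilities so that they are summable.

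First, I would quote \cite[Theorem 2]{Trillos} in its quantitative form: for the torus $\Omega$ with a density bounded above and below, there exist constants $C,C',\alpha > 0$, with $\alpha$ depending on $C$ and free to be made as large as desired by enlarging $C$, such that for each $n$ one may construct a transport map $T_n:\Omega\to\Omega_n$ with $(T_n)_\#\mu = \mu_n$ satisfying
\[
\bbP\l \Vert \Id - T_n \Vert_{\Lp{\infty}} > C \beta_n \r \leq C' n^{-\alpha},
\]
where the scale $\beta_n$ is $\log(n)^{3/4}/n^{1/2}$ when $d=2$ and $\log(n)^{1/d}/n^{1/d}$ when $d \geq 3$. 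These are exactly the rates appearing on the right-hand side of \eqref{eq:Back:TLp:LinftyMapsRate} after dividing through.

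Second, I would fix $C$ large enough to ensure $\alpha > 1$, define the bad events
\[
A_n = \lb \Vert \Id - T_n \Vert_{\Lp{\infty}} > C \beta_n \rb,
\]
and note that $\sum_{n=1}^\infty \bbP(A_n) \leq C' \sum_{n=1}^\infty n^{-\alpha} < \infty$. By the Borel--Cantelli Lemma we get $\bbP(A_n \text{ i.o.}) = 0$, so that $\bbP$-almost surely there exists $N=N(\omega)$ finite with $\Vert \Id - T_n \Vert_{\Lp{\infty}} \leq C \beta_n$ for every $n \geq N$. Rearranging this inequality into the ratios appearing in \eqref{eq:Back:TLp:LinftyMapsRate} and taking $\limsup$ yields the claimed bound, with the constant on the right-hand side being exactly the $C$ chosen above.

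The only potentially delicate step is verifying that the tail estimate from \cite{Trillos} is indeed polynomially summable with the stated rate $\beta_n$ (so that Borel--Cantelli applies without any further intervention). Once this is in hand, the rest of the argument is essentially cosmetic; no additional probabilistic or analytic machinery is required. Since \cite{Trillos} is stated on general compact manifolds and the torus is a special case with a uniform lower bound on the density, the hypotheses of their theorem are automatically satisfied under our assumptions.
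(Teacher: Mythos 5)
Your proposal is correct and matches the paper's approach exactly: the paper explicitly states that the result ``is a simple consequence of \cite[Theorem 2]{Trillos} (in the special case where $\Omega$ is a torus) with the Borel--Cantelli Lemma,'' which is precisely the argument you give, including the step of enlarging the constant to make the tail probabilities summable.
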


In terms of the assumptions we introduce later, the conditions in the above theorem are given by~\ref{ass:Main:Ass:S1}, \ref{ass:Main:Ass:M1}, \ref{ass:Main:Ass:M2} and~\ref{ass:Main:Ass:D1}.

\subsection{\texorpdfstring{$\Gamma$}{Gamma}-Convergence} \label{subsec:Back:Gamma}

The appropriate framework to describe the convergence of variational problems is $\Gamma$-convergence from the calculus of variations.
We refer to~\cite{gammaConvergence} for a detailed treatment of the subject and only recall the key properties we later need here.
We begin with the definition of $\Gamma$-convergence.

\begin{mydef}
Let $(Z,d_Z)$ be a metric space and $E_n:Z \to \bbR$ a sequence of functionals.
We say that $E_n$ $\Gamma$-converges to $E$ with respect to $d_Z$ if:
\begin{enumerate}
\item For every $z \in Z$ and every sequence $\{z_n\}$ with $d_Z(z_n,z) \to 0$:
\[ \liminf_{n \to \infty} E_n(z_n) \geq E(z); \]
\item For every $z \in Z$, there exists a sequence $\{z_n\}$ with $d_Z(z_n,z) \to 0$ and
\[ \limsup_{n\to \infty} E_n(z_n) \leq E(z). \]
\end{enumerate}
\end{mydef}

The notion of $\Gamma$-convergence allows one to derive the convergence of minimizers from compactness.

\begin{mydef}
We say that a sequence of functionals $E_n:Z \to \bbR$ has the compactness property if the following holds: if $\{n_k\}_{k \in \bbN}$ is an increasing sequence of integers and $\{z_k\}_{k \in \bbN}$ is a bounded sequence in $Z$ for which $\sup_{k\in \bbN} E_{n_k}(z_k) < \infty$, then the closure of $\{z_k\}$ has a convergent subsequence. 
\end{mydef}

\begin{proposition}
\label{prop:Back:Gamma:minimizers}
Convergence of minimizers.
Let $E_n:Z \mapsto [0,\infty]$ be a sequence of functionals which are not identically equal to $\infty$.
Suppose that the functionals satisfy the compactness property and that they $\Gamma$-converge to $E:Z \mapsto [0,\infty]$.
Then
\[ \lim_{n\to \infty} \inf_{z\in Z} E_n(z) = \min_{z \in Z} E(z). \]
Furthermore, the closure of every bounded sequence $\{z_n\}$ for which \begin{equation} \label{eq:Back:Gamma:MinConv}
\lim_{n \to \infty} \left(E_n(z_n) - \inf_{z \in Z} E_n(z) \right) = 0
\end{equation}
has a convergent subsequence and each of its cluster points is a minimizer of $E$.
In particular, if $E$ has a unique minimizer, then any sequence satisfying~\eqref{eq:Back:Gamma:MinConv} converges to the unique minimizer of $E$.
\end{proposition}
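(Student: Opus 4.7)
The plan is to split the proposition into three steps, combining the recovery-sequence ($\Gamma$-limsup) and $\Gamma$-liminf inequalities with the compactness property. \textbf{Step A} establishes $\limsup_{n\to\infty} \inf_{z\in Z} E_n(z) \leq \inf_{z\in Z} E(z)$; \textbf{Step B} shows that bounded near-minimizing sequences have convergent subsequences, all of whose limits are minimizers of $E$, thereby simultaneously proving attainment of $\min E$ and the reverse inequality $\liminf_{n\to\infty} \inf_Z E_n \geq \min_Z E$; \textbf{Step C} deduces the uniqueness conclusion by a standard subsequence-of-subsequence contradiction.

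For Step A, I fix any $w \in Z$ with $E(w) < \infty$ (if no such $w$ exists, the inequality is vacuous) and invoke the $\Gamma$-limsup condition to produce a recovery sequence $w_n \to w$ in $Z$ with $\limsup_{n\to\infty} E_n(w_n) \leq E(w)$. Since $\inf_{z\in Z} E_n(z) \leq E_n(w_n)$ for each $n$, taking $\limsup$ and then infimum over such $w$ yields the desired bound.

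For Step B, let $\{z_n\} \subset Z$ be bounded and satisfy \eqref{eq:Back:Gamma:MinConv}. Since $E_n(z_n) - \inf_Z E_n \to 0$, Step A gives $\limsup_{n\to\infty} E_n(z_n) \leq \inf_Z E < \infty$, so after passing to a tail the sequence $\{z_n\}$ has uniformly bounded energy and the compactness hypothesis applies: along any subsequence one may extract a further subsequence $z_{n_k} \to z^* \in Z$. For any $w \in Z$ with $E(w) < \infty$ and associated recovery sequence $w_n \to w$, the $\Gamma$-liminf inequality combined with \eqref{eq:Back:Gamma:MinConv} yields
\[
E(z^*) \leq \liminf_{k\to\infty} E_{n_k}(z_{n_k}) = \liminf_{k\to\infty} \inf_{z\in Z} E_{n_k}(z) \leq \liminf_{k\to\infty} E_{n_k}(w_{n_k}) \leq E(w),
\]
so $z^*$ minimizes $E$; the minimum is thus attained and $\liminf_{n\to\infty} \inf_Z E_n \geq E(z^*) = \min_Z E$. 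Combined with Step A this proves the first displayed equality in the proposition.

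Step C is then routine: if $E$ has a unique minimizer $z^*$ and some bounded sequence $\{z_n\}$ satisfying \eqref{eq:Back:Gamma:MinConv} did \emph{not} converge to $z^*$, some subsequence would stay outside a fixed ball around $z^*$, yet Step B would extract a further subsequence converging to a minimizer of $E$, which must be $z^*$, a contradiction. The main conceptual subtlety throughout is the tension between needing a bounded near-minimizing sequence in order to invoke compactness and using that same compactness to establish existence of a minimizer of $E$; Step B resolves this cleanly by taking boundedness of $\{z_n\}$ as part of the hypothesis, which is what implicitly justifies writing $\min_{z\in Z}E(z)$ rather than $\inf_{z\in Z}E(z)$ in the statement.
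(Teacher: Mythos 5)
The paper does not actually supply a proof of Proposition~\ref{prop:Back:Gamma:minimizers}; it is quoted as a standard result and the reader is referred to \cite{gammaConvergence} (Dal Maso) for the general theory. So there is no ``paper's proof'' to compare against; what you have written is essentially the textbook argument, and it is correct.

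Two small remarks on the write-up. First, in Step~B the displayed chain delivers $E(z^*)\le \liminf_{k}\inf_{Z}E_{n_k}$ only \emph{along the particular subsequence} $\{n_k\}$ you extracted, whereas the conclusion you draw is $\liminf_{n}\inf_{Z}E_n\ge E(z^*)$ for the full sequence. The standard way to close this is to start the extraction from a subsequence $\{n_j\}$ along which $\inf_Z E_{n_j}\to\liminf_n\inf_Z E_n$ and note that every further subsequence then retains that limit; since you have already observed that you can extract ``along any subsequence,'' the repair is one clause, but as written the reader has to supply it. Second, you correctly flag that the unconditional statement $\lim_n\inf_Z E_n=\min_Z E$ in the proposition implicitly presupposes the existence of a bounded sequence satisfying~\eqref{eq:Back:Gamma:MinConv}: with the compactness hypothesis phrased as in the paper (requiring a bounded sequence as input), this existence cannot be derived, so your reading of the statement as being relative to such a sequence is the right one. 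That is a genuine subtlety of the paper's phrasing, not of your argument.

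Overall: correct, standard, and you identified the one real soft spot in the proposition's formulation.
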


In our work, the setup will be as follows: we will show that our discrete objective $\cE_{n,\eps_n}$ $\Gamma$-converges, with respect to the $\TLp{2}$ topology, to the continuum objective $\cE_{\infty}$. 
Then, we will show that the sequence of minimizers of $\cE_{n,\eps_n}$, $\{u_n\}_{n=1}^\infty$, are precompact in $\TLp{2}$ --- its closure has a convergent subsequence.
Using \eqref{eq:Back:Gamma:MinConv}, we will obtain that the minimizers of $\cE_{n,\eps_n}$ converge to the minimizers of $\cE_{\infty}$.

\subsection{Related Works} \label{subsec:back:related}

The framework described in Sections~\ref{subsec:Back:TLp} and~\ref{subsec:Back:Gamma} has been used to show the convergence of minimizers for different objective functions.
Namely, in~\cite{Trillos3} it is shown that the minimizers of the graph total variation
\[ \cE_{n,\eps_n}(u_n) = \frac{1}{\eps_n n^2}\sum_{i=1,j=1}^{n} w_{\eps_n,ij} \vert u_n(x_i) - u_n(x_j) \vert \]
converge to the minimizers of the weighted total variation
\[ \cE_{\infty}(u) = \TV(u,\rho^2) = \sup_{\phi \in \Ckc{\infty}(\Omega,\bbR^d)} \lb \int_\Omega u \Div(\phi) \, \dd x \,:\, \phi\in\Ckc{\infty}(\Omega;\bbR^d), \vert \phi(x) \vert \leq \rho^2(x)\rb \]
where $\rho$ is the Lebesgue density of $\mu \in \cP(\Omega)$ (from which the points $x_i$ are sampled) and $\Ckc{\infty}(\Omega,\bbR^d)$ are smooth functions with compact support from $\Omega$ to $\bbR^d$.

Other examples include various graph cut problems and their continuum counterparts in 
\cite{JMLR:v17:14-490}, \cite{trillos2017estimating}, \cite{thorpeCheeger} and \cite{doi:10.1137/16M1098309}, the Mumford-Shah functional in \cite{Caroccia_2020} and an application in empirical risk minimization \cite{garcia_trillos_murray_2017}.

Not necessarily relying on the $\TLp{p}$-framework, other functionals have been used for clustering and semi-supervised learning. In particular, the Ginzburg-Landau functional which has been extended to graphs in \cite{doi:10.1137/16M1070426} and  $\Gamma$-convergence of (variations of) the latter have been considered in \cite{Gennip}, \cite{cristoferi_thorpe_2020} and \cite{thorpe2019asymptotic}. The eikonal equation on graphs is studied through a viscosity solutions approach in \cite{doi.org/10.48550/arxiv.2202.08789} and \cite{doi.org/10.48550/arxiv.2105.01977} and one finds the same technical tools applied in \cite{Calder_2018} too.

The authors in~\cite{GARCIATRILLOS2018239} extended the spectral convergence results cited in Section \ref{sec:Intro} to include convergence of eigenvectors using the $\TLp{p}$ topology.
They show that eigenvectors of the graph Laplacian converge to eigenfunctions of a weighted continuum Laplacian.
A consequence of this result is that minimizers of
\[ \cE_{n,\eps_n}(u_n) = \frac{1}{\eps^2_n n^2} \sum_{i,j=1}^n w_{\eps_n,ij} (u_n(x_i)-u_n(x_j))^2 \]
converge to the minimizers of
\[ \cE_{\infty}(u) = \begin{cases} \int_\Omega \vert \nabla u(x) \vert^2 \rho(x)^2 \, \dd x & \text{if } u \in \Wkp{1}{2} \\ +\infty & \text{if } u \in \Lp{2} \setminus \Wkp{1}{2}. \end{cases} \]

While the authors in \cite{GARCIATRILLOS2018239} were interested in the machine learning problem of spectral clustering and its consistency, the semi-supervised problem requires pointwise constraints as in \eqref{eq:Intro:LaplaceLearning}. This has led to poor performance of Laplace learning (see \cite{10.5555/2984093.2984243} and \cite{pmlr-v49-elalaoui16}). We briefly elaborate on the intuition stemming from Sobolev embeddings which motivates the use of higher-order variations of Laplace learning to overcome these issues. As was shown in \cite{GARCIATRILLOS2018239} and noted above, minimizers of Laplace learning converge to minimizers of a $\Wkp{1}{2}$ semi-norm. If we now constrain our minimizer $u \in \Wkp{1}{2}$ to take certain values $\ell_i$ at $x_i$ for $i \leq N$, we need $u$ to be at least continuous which is satisfied only for $d = 1$. In most applications, we have $d > 1$ and consequently, with a large data set --- which theoretically resembles the large data limit setting --- the minimizer of Laplace learning will not be regular and, in fact, be an almost constant function developing spikes at the imposed labels (see for example \cite[Figure 1]{98b487bb64994720ba648f45328e2135}). Since our use of the graph structure relies on the idea that connected points in the graph should have similar labels, it is clear that an irregular function cannot be used for the purpose of labelling the rest of the dataset and justifies the need for other learning algorithms. In particular, one approach to solve this problem is to consider methods for which the continuum limit describes a higher-order Sobolev semi-norm since the general Sobolev embeddings guarantee that $u \in \Wkp{s}{p}$ is continuous whenever $sp > d$ allowing for greater flexibility than Laplace learning.

Fixing $s = 1$ and considering $p > 1$ leads to $p$-Laplacian regularization defined as
\begin{equation} \label{eq:Back:Rel:pLapDisProb}
\cE_{n,\eps_n}(u_n) = \frac{1}{\eps^p_n n^2} \sum_{i=1,j=1}^n w_{\eps_n,ij} \vert u_n(x_i) - u_n(x_j) \vert^p \quad \text{subject to } u_n(x_i) = \ell_i \text{ for } i \leq N.
\end{equation}
In~\cite{Slepcev}, it is shown that minimizers of \eqref{eq:Back:Rel:pLapDisProb} converge to the minimizers of either
\begin{equation} \label{eq:Back:Rel:pLapContNoCons}
\cE_{\infty}(u) = \begin{cases} \int_\Omega \vert \nabla u \vert^p \rho^2(x) \,\dd x & \text{if } u \in \Wkp{1}{p}, \\ +\infty & \text{else} \end{cases}
\end{equation}
or
\begin{equation} \label{eq:Back:Rel:pLapContCons}
\cE_{\infty}^{(\mathrm{con})}(u) = \begin{cases} \int_\Omega \vert \nabla u \vert^p \rho^2(x) \, \dd x & \text{if } u \in \Wkp{1}{p} \text{ and } u(x_i) = \ell_i \text{ for } i \leq N, \\ +\infty & \text{else.} \end{cases}
\end{equation} 
The parameter that controls the convergence to either functional is $\eps_n$ (intuitively one might expect that $p>d$ is enough for the constrained minimizers of~\eqref{eq:Back:Rel:pLapDisProb} to converge to minimizers of~\eqref{eq:Back:Rel:pLapContCons} and whilst this is necessary it turns out not to be sufficient~\cite{Slepcev}): indeed, as we will present later on, $\eps_n$ has to tend to $0$ but the rate at which it does so is crucial in the analysis. 

Taking $p \to \infty$ in $p$-Laplacian regularization, one can consider the $\infty$-Laplacian and the associated Lipschitz learning problem \cite{pmlr-v40-Kyng15}. Various convergence results related to the latter are presented in \cite{doi:10.1137/18M1199241}, \cite{Bungert} and \cite{doi.org/10.48550/arxiv.2111.12370}. It is also worth noting that if one decides to fix a constant localization parameter $\eps_n = \eps$ in $p$-Laplace regularization, then we are in the setting of non-local $p$-Laplacian regularization and convergence results related to the limit of \eqref{eq:Back:Rel:pLapDisProb} are discussed in \cite{doi:10.1137/17M1123596}, \cite{refId0} and \cite{https://doi.org/10.48550/arxiv.2010.08697}.

Another higher-order variant of Laplace learning that has been introduced in \cite{pmlr-v15-zhou11b} and studied in \cite{Stuart} is fractional Laplacian regularization (see \eqref{eq:Main:Not:Setting:DisEnergy} for the discrete energy and \eqref{eq:Main:Not:DisMin} for the associated semi-supervised learning problem). In this case, the idea is to fix $p = 2$ and to let $s$ vary. This leads to the continuum problem introduced in \eqref{eq:Main:Not:ContEnergy} which can be shown to be related to a $\Wkp{s}{2}$ semi-norm \cite[Lemma 17]{Stuart}. Similar to the convergence result in \cite{GARCIATRILLOS2018239} for Laplace learning, it is shown in \cite{Stuart} that minimizers of \eqref{eq:Main:Not:Setting:DisEnergy} converge to minimizers of \eqref{eq:Main:Not:ContEnergy}. It is precisely this setting that we aim to extend to semi-supervised learning by adding pointwise constraints. The results of our analysis are stated in Theorem \ref{thm:Main:Res:ConsFracLap}.

The advantage of fractional Laplace regularization over $p$-Laplacian regularization comes from the formulation of the discrete energy \eqref{eq:Main:Not:Setting:DisEnergy} in terms of the eigenpair decomposition of the graph Laplacian as detailed in \eqref{eq:Main:Not:Setting:DisEnergyDecomposition}. The latter allows one to use Lagrange multipliers in order to find an exact discrete minimizer to the semi-supervised problem while one has to rely on gradient descent to compute an approximate solution to $p$-Laplacian regularization as described in \cite{Slepcev}. We refer to Remark \ref{rem:Main:Res:Approximating} for a further discussion on higher-order variations of Laplace learning.

Lastly, we point out other approaches that have been proposed to deal with the regularity problems of Laplace learning in semi-supervised learning. In particular, we refer to game theoretic $p$-Laplacian regularization \cite{Calder_2018} and Poisson learning \cite{98b487bb64994720ba648f45328e2135}.
One can also study estimators which are required to lie in a space spanned by a finite number of eigenvectors of the graph Laplacian as is done in \cite{10.5555/2968618.2968737} and \cite{sslManifolds} or 
properly re-weight the Laplacian as in~\cite{wnll} and~\cite{calderSlepcev}.

\section{Main Results} \label{sec:Main}

In this section we present our main results.
We start by defining the notation we will use, then we state our assumptions followed by the main results.
The proofs of the main results are in the following section.

\subsection{Notation and Setting} \label{subsec:Main:Not}

We now want to formalize the notation used throughout the paper and describe our precise setting. 

\subsubsection{Graph Setting}

Given a space $\Omega\subset\bbR^d$, a measure $\mu$ on $\Omega$ that has density $\rho$, iid samples $\Omega_n = \{x_i\}_{i=1}^n$ from $\mu$, a weight function $\eta$ and a length-scale $\eps$, we will define a random geometric graph as follows.
For $\eps>0$, we write $\eta_\eps(\cdot) = \eps^{-d}\eta(\cdot/\eps)$.
We define a graph $G_{n,\eps} = (\Omega_n,W_{n,\eps})$ where $\Omega_n$ are the vertices and $W_{n,\eps}=(w_{\eps,ij})_{i,j=1}^n$ is the edge weight matrix with entries $w_{\eps,ij} = \eta_\eps(\vert x_i - x_j\vert)$. 

Let $D_{n,\eps}$ be the diagonal matrix with entries $d_{n,\eps,ii} = \sum_{j = 1}^n w_{\eps,ij}$ and define
\[ \sigma_\eta = \frac{1}{d}\int_{\bbR^d} \eta(\vert h \vert) \vert h \vert^2 \,\dd h < \infty. \]
The graph Laplacian is defined as
\begin{equation} \label{eq:Main:Not:Setting:Deltaneps}
\Delta_{n,\eps} := \frac{2}{\sigma_\eta n\eps^2} (D_{n,\eps} - W_{n,\eps}).
\end{equation}
With a small abuse of notation we can interpret $\Delta_{n,\eps}$ as a matrix $\Delta_{n,\eps}\in\bbR^{n\times n}$ or as an operator $\Delta_{n,\eps}:\Lp{2}(\mu_n)\to\Lp{2}(\mu_n)$ where $\mu_n=\frac{1}{n}\sum_{i=1}^n \delta_{x_i}$ is the empirical measure.

\subsubsection{Variational Problems}

Given functions $u_n,v_n: \Omega_n \to \bbR$, we define the $\Lp{2}(\mu_n)$ inner product: 
\begin{equation} \label{eq:Main:Not:Setting:InnerProduct}
\langle u_n, v_n \rangle_{\Lp{2}(\mu_n)} = \frac{1}{n} \sum_{i=1}^n u_n(x_i)v_n(x_i).
\end{equation} 
Such functions can be considered vectors in $\bbR^n$ and with an abuse of notation we will understand $u_n$ as both a function $u_n:\Omega_n\to \bbR$ and a vector $\bbR^n$.

We denote the eigenpairs of $\Delta_{n,\eps}$ by $\{(\lambda_{n,\eps,k},\psi_{n,\eps,k})\}_{k=1}^n$ where $\lambda_{n,\eps,k}$ are in increasing order, $0 = \lambda_{n,\eps,1} < \lambda_{n,\eps,2} \leq \lambda_{n,\eps,3} \leq \hdots \leq \lambda_{n,\eps,n}$, (where strict inequality between $\lambda_{n,\eps,1}$ and $\lambda_{n,\eps,2}$ follows when the graph $G_{n,\eps}$ is connected) and note that $\{\psi_{n,\eps,k}\}_{k=1}^n$ form for a basis of $\Lp{2}(\mu_n)$.

For $s>0$, we will be considering the following energies on the graph:
\begin{equation} \label{eq:Main:Not:Setting:DisEnergy}
\EnergySneps(u_n) = \langle u_n, \Delta_{n,\eps}^s u_n\rangle_{\Lp{2}(\mu_n)}.
\end{equation}
Note that with the eigenfunction decomposition, we can write
\begin{equation}\label{eq:Main:Not:Setting:DisEnergyDecomposition}
\EnergySneps(u_n) = \sum_{k=1}^n \lambda_{n,\eps,k}^s \langle u_n, \psi_{n,\eps,k}\rangle_{\Lp{2}(\mu_n)}^2.
\end{equation} 
\commentOut{
This formulation also allows us to define a truncated version of the energy:
\[
\EnergySnepstrunc(u_n) = \sum_{k=1}^{\lfloor K_n \rfloor - 1} \lambda_{n,\eps,k}^s \langle u_n, \psi_{n,\eps,k}\rangle_{\Lp{2}(\mu_n)}^2
\]
where the definition of $K_n$ will be made precise in Proposition \ref{prop:Proofs:Back:DisReg:EVecAlt}. The truncated energy permits to circumvent many of the difficulties arising from the inclusion of the tail of the eigenvalues: indeed, we investigate the control of $\{\lambda_{n,l}\}_{l=1}^{\lfloor K_n \rfloor}$ in Lemma \ref{lem:Proofs:Back:EvalTails:Jn}, Remark \ref{rem:Proofs:Back:EvalTails:eigenvalueBounds} and Proposition \ref{prop:Proofs:Back:DisReg:EVecAlt}.
}

Suppose now that we are given $\{\ell_i\}_{i=1}^N \subseteq \bbR$ with $N < n$ labels for the first $N$ samples $\{x_i\}_{i=1}^N$.
The problem we consider is to find a function $u_n$ defined as:
\begin{equation} \label{eq:Main:Not:DisMin}
u_n \in \argmin_{v_n \in \Lp{2}(\mu_n)} \EnergySnepsn(v_n) \quad \text{such that } v_n(x_i) = \ell_i \text{ for } i \leq N.
\end{equation}
Using Lagrange multipliers, one can show the well-posedness of \eqref{eq:Main:Not:DisMin}.

This discrete problem has a continuum analogue.
Namely, let $\Delta_\rho$ be the continuum weighted Laplacian operator defined by
\begin{equation} \label{eq:Main:Not:WeightLap}
\Delta_\rho u(x) = -\frac{1}{\rho(x)}\Div(\rho^2\nabla u)(x), \, x \in \Omega \quad \quad \quad \frac{\partial u}{\partial n} = 0, \, x \in \partial \Omega.
\end{equation}
and let $\{(\lambda_k,\psi_k)\}_{k=1}^\infty$ be its associated eigenpairs where $\lambda_1 = 0 < \lambda_2 \leq \lambda_3 \leq \hdots$ and note that $\{\psi_k\}_{k=1}^\infty$ form a basis of $\Lp{2}(\mu)$.
The continuum energy is then defined as
\[
\EnergyS(u) = \langle u, \Delta^s_\rho u \rangle_{\Lp{2}(\mu)}
\]
or, using the eigenfunction decomposition, 
\begin{equation} \label{eq:Main:Not:ContEnergy}
\EnergyS(u) = \sum_{k=1}^\infty \lambda_k^s \langle u, \psi_k \rangle_{\Lp{2}(\mu)}^2.
\end{equation}
We look at the associated problem, namely to find $u$:
\begin{equation} \label{eq:Main:Not:ContMin}
u \in \argmin_{v\in\Lp{2}(\mu_n)} \EnergyS(v) \quad \text{such that } v(x_i) = \ell_i \text{ for } i \leq N.
\end{equation}
Well-posedness of \eqref{eq:Main:Not:ContMin} is derived from Theorem \ref{thm:Main:Res:ConsFracLap} and the convexity of $\EnergyS(\cdot)$.

We lastly define the energies for which we will prove $\Gamma$-convergence.
We consider the set
\begin{equation} \label{eq:Main:Not:Setting:H}
\cH^s(\Omega) = \{ h \in \Lp{2}(\mu) \spaceBar \EnergyS(h) < +\infty \}. 
\end{equation}
As pointed out in Section \ref{subsec:back:related}, $\cH^s(\Omega)$ is closely related to the Sobolev space $\Wkp{s}{2}(\Omega)$.

Consider for $(\nu,v) \in \TLp{2}$:
\begin{equation} \label{eq:Main:Not:Fn}
\cF_{n,\eps_n}( (\nu,v) ) = \begin{cases} \EnergySnepsn(v) &\text{if } \nu = \mu_n \text{ and for } i\leq N, v(x_i) = \ell_i \\ +\infty & \text{else.} \end{cases} 
\end{equation}

\commentOut{and
\begin{equation} \label{eq:Main:Not:FnTrunc}
\cF_{n,\eps_n,\text{trunc}}( (\nu,v) ) = \begin{cases} \EnergySnepsntrunc(v) &\text{if } \nu = \mu_n \text{ and for } i\leq N, v(x_i) = \ell_i \\ +\infty & \text{else.} \end{cases} 
\end{equation}
}

The continuum-limit of this energy is either
\begin{equation} \label{eq:Main:Not:F}
\cF((\nu,v)) = \begin{cases} \EnergyS(v) & \text{if } \nu = \mu, v \in \cH^s(\Omega) \text{ and for } i\leq N, v(x_i) = \ell_i \\ +\infty & \text{else} \end{cases}
\end{equation}
or
\[ \cG((\nu,v)) = \begin{cases} \EnergyS(v) & \text{if } \nu = \mu \text{ and } v \in \cH^s(\Omega), \\ +\infty & \text{else} \end{cases} \]
depending on the asymptotic behaviour of $\eps_n$ which we detail in Theorem \ref{thm:Main:Res:ConsFracLap}.

\subsubsection{Probability setting}

In the sequel, some of our results will depend on the data set $\{x_i\}_{i=1}^\infty$. For example, given $n$, a length-scale $\eps_n$ and a weight construction as in \eqref{eq:Intro:Weights}, as we will point out in Section \ref{subsec:main:assumputions}, the graph is only connected for some subset of all sequences $\{x_i\}_{i=1}^n$. Since we have the modeling assumption $x_i \iid \mu $, we can characterize how large this subset of sequences is in a probabilistic manner. In particular, we can create a probability space $(\Psi,\cF,\bbP)$ in which each element is a sequence $\{x_i\}_{i=1}^\infty$ and our aim will be to show that there exists $\Psi^\prime\subseteq \Psi$ with $\bbP(\Psi') = 1$ such that some property holds for all sequences in $\Psi'$.

We now detail the construction of $(\Psi,\cF,\bbP)$. Consider the probability space $(\Omega, \mathcal{B}(\Omega), \mu)$ where $\cB(\Omega)$ is the Borel $\sigma$-algebra on $\Omega$. Let $\Psi = \Omega^{\mathbb{N}}$.
For any set $J \subseteq \mathbb{N}$ with $ 0 < \vert J \vert < \infty$, we define the coordinate maps $X_J:\Psi \mapsto \Omega^{\vert J \vert}$ by 
\[
X_J(Y) = (\psi_{J_1},\hdots,\psi_{J_{\vert J \vert}})
\]
where $J = \{J_1,\hdots,J_{\vert J \vert}\}$ and $\psi \in \Psi$.

By classical measure theoretical arguments, for any set $J \subseteq \mathbb{N}$ with $\vert J \vert < \infty$ and an appropriate product $\sigma$-field $\mathcal{B}^{\vert J \vert}(\Omega)$ -- the smallest $\sigma$-field that makes the coordinate maps $\{X_{\{j\}}\}_{j \in J}$ measurable, we can define a probability space $(\Omega^{\vert J \vert}, \mathcal{B}^{\vert J \vert }(\Omega),\mathbb{P}^J)$ where $\mathbb{P}^J$ is the product measure satisfying $\mathbb{P}^J(A_{J_1} \times \hdots \times A_{J_{\vert J \vert}}) = \mu(A_{J_1}) \times \hdots \times \mu(A_{J_{\vert J \vert}})$ for $A_{J_i} \in \mathcal{B}(\Omega)$.

By an application of the Daniell--Kolmogorov Theorem \cite[Theorem 2.4.3]{tao2011introduction} (and because $(\Omega,\mathbb{B}(\Omega))$ is Polish), there exists a measure space $(\Psi,\cF,\bbP)$ such that
$\mathbb{P}(X_J^{-1}(A)) = \mathbb{P}^J(A)$ for all $A \in \mathcal{B}^{\vert J \vert}(\Omega)$. It is also straight-forward to show that the coordinate maps $\{X_{\{i\}}: \Psi \mapsto \Omega \}_{i=1}^\infty$ are independent and have distribution $\mu$. For any $\{x_i\}_{i=1}^\infty$, there exists $\psi \in \Psi$ such that $x_i = X_{\{i\}}(\psi)$ for all $i \in \bbN$.

\subsection{Assumptions} \label{subsec:main:assumputions}

We start by listing the assumptions on the space $\Omega$.

\begin{assumptions}
We make the following assumption on the space.

\begin{enumerate}[label=\textbf{S.\arabic*}]
\item The feature vector space $\Omega$ is the unit torus $\sfrac{\bbR^d}{\bbZ^d}$. We write $\dTorus$ for the manifold distance. \label{ass:Main:Ass:S1}
\end{enumerate}
\end{assumptions}

The assumption that $\Omega$ is the torus greatly simplifies our analysis in several places.
Firstly, we don't have to analyse the boundary as we would in a bounded Euclidean domain (the pointwise rates of convergence in the graph-to-continuum Laplacian are different close to the boundary compared with the interior).
And secondly, we don't have to take into account curvature, as we would on a manifold. However, we do not expect significant changes in the proofs if the latter were to be considered.

We make the following assumptions on the measure.

\begin{assumptions}
Assumptions on the measure.
\begin{enumerate}[label=\textbf{M.\arabic*}]
\item The measure $\mu$ is a probability measure on $\Omega$. \label{ass:Main:Ass:M1} 
\item There is a strictly positive Lipschitz continuous Lebesgue density $\rho$ of $\mu$. \label{ass:Main:Ass:M2}
\end{enumerate}
\end{assumptions}

Since the density $\rho$ is strictly positive and continuous on a compact domain then we can infer that $\rho$ is bounded from above and below by strictly positive constants, i.e.
\[ 0< \min_{x\in\Omega} \rho(x) \leq \max_{x\in\Omega} \rho(x) < +\infty. \]
We also have that $\mu$ has finite $p$th moment.

The data consists of feature vectors $\{x_i\}_{i=1}^n$ and labels $\{\ell_i\}_{i=1}^N$ and we make the following assumptions.

\begin{assumptions}
Assumptions on the data.
\begin{enumerate}[label=\textbf{D.\arabic*}]
\item Feature vectors $\Omega_n = \{x_i\}_{i=1}^n$ are iid samples from a measure $\mu$ satisfying \ref{ass:Main:Ass:M1}.
We denote by $\mu_n$ the empirical measure associated to our samples. \label{ass:Main:Ass:D1}
\item There are $N$ labels $\{\ell_i\}_{i=1}^N\subset \bbR$ corresponding to the first $N$ feature vectors $\{x_i\}_{i=1}^N$. \label{ass:Main:Ass:D2}
\end{enumerate}
\end{assumptions}

It is straightforward to generalise to the case where labels are multidimensional, i.e. $\ell_i\in\bbR^m$, with only an additional notational burden on the presentation.

The weight function $\eta$ is assumed to satisfy the following assumptions.

\begin{assumptions}
Assumptions on the weight function or kernel.
\begin{enumerate}[label=\textbf{W.\arabic*}]
\item The function $\eta:[0,\infty) \to [0,\infty)$ is decreasing, with compact support, Lipschitz continuous on $[0,1]$, and has $\eta(0.5) > 0.5$ and $\eta(1) = 0$. 
\label{ass:Main:Ass:W1}
\item The function $\eta$ integrates to unity over $\bbR^d$, i.e. $\int_{\bbR^d} \eta(\vert x \vert) \,\dd x = 1$. \label{ass:Main:Ass:W2}
\end{enumerate}
\end{assumptions}

The assumption that $\eta$ is Lipschitz continuous on $[0,1]$, $\eta(1/2)>0$, and integrates to unity (Assumption~\ref{ass:Main:Ass:W2}) are not important and could be replaced by the assumption that $\eta$ is Lipschitz continuous on $[0,c]$ for some $c>0$ and $\eta(0)>0$.
Fixing $c=1$, and assuming $\eta(1/2) > 0$ along with~\ref{ass:Main:Ass:W2} simplifies the presentation.
We also note that Assumption~\ref{ass:Main:Ass:W1} implies that
\[ \int_0^\infty \eta(r) r^{d+1} \,\dd r < +\infty. \]
The assumption that $\eta$ is Lipschitz continuous in a closed interval around 0 is slightly stronger than what is usually assumed; we include the assumption here so as to be able to apply the results from~\cite{Trillos}.

We finally have the assumptions on the length scale $\eps=\eps_n$ which we scale with the number of feature vectors.

\begin{assumptions}
Assumptions on the length-scale.
\begin{enumerate}[label=\textbf{L.\arabic*}]
\item The length scale $\eps=\eps_n$ is positive and converges to 0, i.e. $0<\eps_n \to 0$. \label{ass:Main:Ass:L1}
\item The length scale $\eps=\eps_n$ satisfies either the lower bound (in the ill-posed case)
\begin{equation} \label{eq:Main:Ass:epsLBIllPosed} \tag{\ref{ass:Main:Ass:L2}.I}
\begin{split} 
\lim_{n \to \infty} \frac{\log(n)}{n \eps_n^{d}} & = 0 \qquad \text{if } d\geq 3 \\
\lim_{n \to \infty} \frac{(\log(n))^{3/2}}{n \eps_n^2} & = 0 \qquad \text{if } d=2
\end{split}
\end{equation}
or the lower bound (in the well-posed case)
\begin{equation} \label{eq:Main:Ass:epsLBWellPosed} \tag{\ref{ass:Main:Ass:L2}.W}
\lim_{n \to \infty} \frac{\log(n)}{n \eps_n^{d+4}} = 0.
\end{equation}
\label{ass:Main:Ass:L2}
\end{enumerate}
\end{assumptions}

Assumption~\ref{ass:Main:Ass:L2} guarantees that (with probability one) that there exists $N_1$ such that for all $n\geq N_1$ the graph $G_{n,\eps_n}=(\Omega_n,W_{n,\eps_n})$ is connected (see \cite{goel} or \cite{DBLP:books/ox/P2003}).

\subsection{Main Results}

The condition on $\eps_n$ stated in \ref{ass:Main:Ass:L2} is a lower bound on $\eps_n$. 
Our results are mostly concerned with finding an upper bound on $\eps_n$, just as is found in~\cite[Theorem 2.1]{Slepcev} for the $p$-Laplacian.
Indeed, it can be shown that if $\eps_n$ goes to $0$ too slowly, the minimizers of~\eqref{eq:Back:Rel:pLapDisProb} converge to a minimizer of~\eqref{eq:Back:Rel:pLapContNoCons}.
Minimizers of~\eqref{eq:Back:Rel:pLapContNoCons} are simply the constant functions and, in particular, do not consider the information from the labels $\{\ell_i\}_{i=1}^N$.
We can therefore consider this regime as degenerate or asymptotically ill-posed.
Conversely, if we have that $\eps_n\to 0$ sufficiently quickly (which for the $p$-Laplacian means $n\eps_n^p \to 0$), then the minimizers of \eqref{eq:Back:Rel:pLapDisProb} converge to the minimizers of \eqref{eq:Back:Rel:pLapContCons}: the latter takes into account the labelling information.

\begin{remark} \label{rem:Main:Res:LowerBoundS}
Lower bounds on $s$.
Suppose that we are able to find an upper bound on $\eps_n$ of the form $n\eps_n^{h(s)} \leq C$ for some $0 \neq h:\bbR \mapsto \bbR$ and $C >0$. Combined with a lower bound of the form $\lim_{n\to \infty} \log(n)n^{-1}\eps_n^{-g(d)} = 0$ for some function $g:\bbN \mapsto (0,\infty)$, this results in an lower bound for $s$. Indeed, the latter two conditions imply that for $n$ large enough
\[
\l \frac{1}{n} \r^{1/g(d)} \ll \eps_n \ll \l \frac{1}{n} \r^{1/h(s)}
\]
or equivalently
\[
h(s) \geq g(d).
\]
In the case of Theorem \ref{thm:Main:Res:ConsFracLap}, with $h(s) = s/2 - 1/2$ and $g(d) = d+4$ (from \eqref{eq:Main:Ass:epsLBWellPosed}), solving the latter yields $s>2d + 9$. We will now implicitly assume that this condition is satisfied whenever we are in the well-posed regime.
\end{remark}

The main result of our work is the following theorem that can be considered the analogue of~\cite[Theorem 2.1]{Slepcev} to the fractional Laplacian case.

\begin{theorem}
\label{thm:Main:Res:ConsFracLap}
Consistency of fractional Laplacian learning.
Assume that~\ref{ass:Main:Ass:S1}, \ref{ass:Main:Ass:M1}, \ref{ass:Main:Ass:M2}, \ref{ass:Main:Ass:D1}, \ref{ass:Main:Ass:D2}, \ref{ass:Main:Ass:W1}, \ref{ass:Main:Ass:W2}, and~\ref{ass:Main:Ass:L1} hold.
Let $(\mu_n,u_n)$ be a sequence of minimizers of $\cF_{n,\eps_n}(\cdot)$, assume that $\rho \in \Ck{\infty}$.
Then, $\bbP$-a.e.:
\begin{enumerate}
\item Let $K_n$ be as in Proposition \ref{prop:Proofs:Back:DisReg:EVecAlt} and assume that $\Vert \psi_{n,k} \Vert_{\Lp{\infty}} \leq C_{\psi} \lambda_{k}^{\alpha}$ for all $k \leq \ceil{K_n}$ and some $\alpha > 0$. Let $s > 2\alpha + 2 + d/2$ and assume that $n\eps_n^{s/2 - 1/2}$ is bounded and that $\eps_n$ satisfies \eqref{eq:Main:Ass:epsLBWellPosed}. Then
\begin{enumerate}
\item  $(\mu_n,u_n)$ has a subsequence $\{(\mu_{n_m},u_{n_m})\}_{m=1}^\infty$ converging to some $(\mu,u)$ in $\TLp{2}$;
\item  $u$ is a continuous function and we have
\begin{equation} \label{eq:Main:Res:mainConvergence}
\max_{i \leq n_m} \vert u_{n_m}(x_i) - u(x_i) \vert \to 0;
\end{equation}
\item $(\mu,u)$ is a minimizer of $\cF(\cdot)$;
\item the whole sequence $(\mu,u_n)$ converges to $(\mu,u)$ in $\TLp{2}$ and as in \eqref{eq:Main:Res:mainConvergence}. 
\end{enumerate}
\item If $n\eps_n^{2s} \to \infty$, $\sup_{n \in \bbN} \Vert u_n \Vert_{\Lp{2}}$ is bounded and~\eqref{eq:Main:Ass:epsLBIllPosed} holds: 
\begin{enumerate}
\item $(\mu_n,u_n)$ has a subsequence $\{(\mu_{n_m},u_{n_m})\}_{m=1}^\infty$ converging to some $(\mu,u)$ in $\TLp{2}$;
\item $(\mu,u)$ is a minimizer of $\cG(\cdot)$.
\end{enumerate}
\end{enumerate}
\end{theorem}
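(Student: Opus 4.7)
The overall strategy is to apply Proposition~\ref{prop:Back:Gamma:minimizers}: I would establish (i) $\Gamma$-convergence of $\cF_{n,\eps_n}$ in the $\TLp{2}$ topology to $\cF$ (well-posed case) or $\cG$ (ill-posed case), and (ii) the compactness property along sequences of bounded energy. Once both hold, the subsequential convergence statements (1a,1c) and (2a,2b) follow immediately, and (1d) follows since any subsequence has a further subsequence converging to a minimizer. The uniform convergence \eqref{eq:Main:Res:mainConvergence} in (1b) is the part that does \emph{not} come from the abstract framework and requires the $\Lp{\infty}$ eigenfunction bound together with the embedding $s>d/2$.

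The spectral toolkit is the backbone of everything. Using the eigenfunction decompositions \eqref{eq:Main:Not:Setting:DisEnergyDecomposition} and \eqref{eq:Main:Not:ContEnergy}, I would invoke the quantitative convergence of the discrete eigenpairs $(\lambda_{n,\eps_n,k},\psi_{n,\eps_n,k})$ to their continuum analogues $(\lambda_k,\psi_k)$, presumably supplied by Proposition~\ref{prop:Proofs:Back:DisReg:EVecAlt}. For the liminf inequality, given $(\mu_n,v_n) \to (\mu,v)$ in $\TLp{2}$ with bounded energy, decomposing $v_n$ into its first $\lceil K_n\rceil$ spectral coefficients plus a tail (controlled via the eigenvalue lower bounds) lets one pass to the limit term-by-term using $\TLp{2}$-convergence of eigenvectors; the tail is bounded by $\lambda_{K_n}^{-s}\EnergySnepsn(v_n)$, which vanishes under our scalings. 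The limsup is a recovery sequence argument: for $u \in \cH^s$ (satisfying the constraints in the well-posed case) the natural candidate $u_n=u\lfloor_{\Omega_n}$ (well-defined by Sobolev embedding) has $\EnergySnepsn(u_n)\to\EnergyS(u)$ by the same spectral comparison; in the ill-posed case one instead discretises the continuum limit $u$ (which need not satisfy the constraints) and then locally perturbs it on the $N$ labelled points, showing the perturbation cost is $O(N/(n\eps_n^{2s}))\to 0$ by the tail spectral estimates.

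The hard part is \textbf{passing the pointwise constraint $v(x_i)=\ell_i$ to the limit} in the well-posed case, since $\TLp{2}$-convergence alone certainly does not control pointwise values. This is precisely where the hypothesis $s>2\alpha+2+d/2$ and the assumed bound $\|\psi_{n,k}\|_{\Lp{\infty}}\leq C_\psi \lambda_k^\alpha$ enter. My plan is to show that any sequence $v_n$ with $\cF_{n,\eps_n}(v_n)$ bounded admits a uniform-in-$n$ decomposition $v_n = \sum_{k} a_{n,k}\psi_{n,\eps_n,k}$ with $\sum_k \lambda_{n,\eps_n,k}^s a_{n,k}^2 \leq M$; truncating at $\lceil K_n\rceil$, estimating the truncated part's $\Lp{\infty}$ norm via Cauchy--Schwarz using $\sum_k \lambda_k^{2\alpha-s}<\infty$ (which holds precisely when $s>2\alpha+d/2$, using Weyl-type growth $\lambda_k\sim k^{2/d}$), and controlling the tail through the eigenvalue lower bounds yields equicontinuity at sample points and hence \eqref{eq:Main:Res:mainConvergence} along the convergent subsequence. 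In particular, $u$ is continuous and the discrete constraints pass to $u(x_i)=\ell_i$, legitimising membership in the domain of $\cF$.

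Compactness is the remaining input, and I would derive it in the same spectral way: a uniform bound on $\EnergySnepsn(v_n)$ gives a uniform bound on $\|v_n\|_{\Lp{2}(\mu_n)}$ (via the Poincaré-type inequality for $\Delta_{n,\eps_n}$, using that $v_n(x_1)=\ell_1$ fixes the mean in the well-posed case, and is assumed in the ill-posed case), and then the spectral truncation $v_n=P_{K_n}v_n+(v_n-P_{K_n}v_n)$ combined with Theorem~\ref{thm:Back:TLp:LinftyMapsRate} and the $\TLp{2}$-convergence of finitely many eigenvectors yields precompactness. Assembling the liminf/limsup/compactness trilogy via Proposition~\ref{prop:Back:Gamma:minimizers} yields the theorem; the switch between the well-posed and ill-posed conclusions is dictated solely by whether the upper bound $n\eps_n^{s/2-1/2}=O(1)$ keeps the cost of enforcing constraints in the recovery sequence finite (so that $\cF$ survives in the limit), or whether $n\eps_n^{2s}\to\infty$ makes the $N$-point constraint cost negligible (so that only $\cG$ remains).
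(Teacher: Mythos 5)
Your high-level plan is essentially the paper's: establish $\Gamma$-convergence and compactness in $\TLp{2}$, deduce convergence of minimizers, and separately prove the pointwise constraint passes to the limit via an $\Lp{\infty}$-type estimate. You also correctly identify the central technical difficulty --- $\TLp{2}$-convergence does not see the constraints, and the eigenfunction bound $\|\psi_{n,k}\|_{\Lp{\infty}}\leq C_\psi\lambda_k^\alpha$ plus a Weyl-law Cauchy--Schwarz truncation is the vehicle for upgrading to uniform convergence at sample points. However, there are two substantive gaps.

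First, your spectral truncation by itself does not deliver compactness. The Cauchy--Schwarz estimate you describe corresponds to the paper's Theorem~\ref{thm:Proofs:Back:DisReg:DisRegAlt}, which yields a discrete Morrey-type modulus of continuity for functions on $\Omega_n$. To convert that into precompactness one must first chain the local estimate to a global one on the torus (Corollary~\ref{cor:Proofs:Back:DisReg:GlobReg}), then extend $u_n$ to the continuum by a mollification $\tilde u_n=J_{\eps_n}\ast(u_n\circ T_n)$ (Lemma~\ref{lem:Proofs:Compactness:RegAlt}), and finally invoke an Ascoli--Arzel\`a argument (Theorem~\ref{thm:Proofs:Compactness:ArzelaAscoli}). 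The alternative you sketch --- truncating $v_n=P_{K_n}v_n+(I-P_{K_n})v_n$ and using $\TLp{2}$-convergence of ``finitely many eigenvectors'' --- is problematic because $K_n\to\infty$, so one is not dealing with finitely many eigenvectors at any stage and a diagonal/uniformity argument in $k$ would be required, which is precisely what the mollification route avoids. Theorem~\ref{thm:Back:TLp:LinftyMapsRate} (transport map rates) is not a substitute here.

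Second, the quantitative thresholds are misattributed. Your Cauchy--Schwarz argument as written only uses $\sum_k\lambda_k^{2\alpha-s}<\infty$, i.e. $s>2\alpha+d/2$; the stated hypothesis $s>2\alpha+2+d/2$ reflects an additional sum $\sum_k\lambda_k^{2\alpha+2-s}$ coming from an extra factor $\lambda_{n,k}$ when the Calder pointwise regularity result is applied to the eigenfunctions (the term $B$ in Theorem~\ref{thm:Proofs:Back:DisReg:DisRegAlt}). Relatedly, you attribute the hypothesis that $n\eps_n^{s/2-1/2}$ is bounded to ``the cost of enforcing constraints in the recovery sequence''; in fact the well-posed $\limsup$ inequality (Proposition~\ref{prop:Proofs:GConvergence:WellPosed:limsup}) does not use it at all --- it uses Lemma~\ref{lem:Proofs:GConvergence:WellPosed:Bound} via the lower bound~\eqref{eq:Main:Ass:epsLBWellPosed}. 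The condition $n\eps_n^{s/2-1/2}=O(1)$ enters through the tail term $D$ of the discrete Morrey estimate, i.e. it is a compactness hypothesis, not a recovery-sequence hypothesis. Your discussion of the ill-posed limsup via perturbing at the $N$ labelled points by Dirac deltas is correct and matches Lemma~\ref{lem:Proofs:GConvergence:IllPosed:diracEnergies}.
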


\begin{remark}
Ill-posed case.
We want to comment on the results of Theorem \ref{thm:Main:Res:ConsFracLap} in the ill-posed case. We start by discussing the requirement that $\sup_{n\in \bbN} \Vert u_n \Vert_{\Lp{2}} \leq C$ for some constant $C$. If the latter does not hold, then there must exist at least one subsequence so that $\Vert u_{n_m} \Vert_{\Lp{2}} \to \infty$. In that case, this subsequence clearly cannot converge to a function and this is why we choose to exclude this possibility in our results. 
Regarding the assumption that $n\epsilon^{2s} \to \infty$, we note that it also covers the case where $s \leq d/2$. Indeed, by \eqref{eq:Main:Ass:epsLBIllPosed}, we have 
\[
    \infty = \lim_{n \to \infty} \frac{n^{1/d}}{\log(n)^{1/d}} \eps_n \leq \lim_{n \to \infty} n^{1/d} \eps_n \leq \lim_{n \to \infty} n^{1/2s} \eps_n.
\]
\end{remark}

Theorem \ref{thm:Main:Res:ConsFracLap} shows that fractional Laplacian regularization behaves just like the $p$-Laplacian one, although we do not identify the critical rate here. 
Indeed, there is a gap in the rate at which $\eps_n\to0$ between our established ill-posed and well-posed regimes.
We will further elaborate on the rate in subsequent remarks.

A significant variable in the statement of Theorem \ref{thm:Main:Res:ConsFracLap} is the constant $\alpha$ for which $\Vert \psi_{n,k} \Vert_{\Lp{\infty}} \leq C_{\psi} \lambda_{k}^{\alpha}$ for all $k \leq \ceil{K_n}$. While we expect $\|\psi_{n,k}\|_{\Lp{\infty}}\leq C\lambda_{k}^{\frac{d-1}{4}}$ to be the best we can achieve (see for example~\cite{sogge01} for $\Lp{\infty}$ bounds on a continuum class of Laplacian operators), Proposition \ref{prop:Proofs:Back:DisReg:EVecAlt} shows that we get $\|\psi_{n,k}\|_{\Lp{\infty}}\leq C\lambda_{k}^{d+1}$. We refer to Section \ref{subsec:linear} for numerical experiments investigating the optimal values of $\alpha$ and $K_n$.
This yields the following straight-forward Corollary.

\begin{corollary}
\label{cor:Main:Res:ConsFracLap}
Consistency of fractional Laplacian learning.
Assume that~\ref{ass:Main:Ass:S1}, \ref{ass:Main:Ass:M1}, \ref{ass:Main:Ass:M2}, \ref{ass:Main:Ass:D1}, \ref{ass:Main:Ass:D2}, \ref{ass:Main:Ass:W1}, \ref{ass:Main:Ass:W2}, and~\ref{ass:Main:Ass:L1} hold.
Let $(\mu_n,u_n)$ be a sequence of minimizers of $\cF_{n,\eps_n}(\cdot)$, assume that $\rho \in \Ck{\infty}$.
Then, $\bbP$-a.e.:
\begin{enumerate}
\item Let $s > 5d/2 + 4$ and assume that $n\eps_n^{s/2 - 1/2}$ is bounded and that $\eps_n$ satisfies \eqref{eq:Main:Ass:epsLBWellPosed}. Then
\begin{enumerate}
\item  $(\mu_n,u_n)$ has a subsequence $\{(\mu_{n_m},u_{n_m})\}_{m=1}^\infty$ converging to some $(\mu,u)$ in $\TLp{2}$;
\item  $u$ is a continuous function and we have
\begin{equation} \label{eq:Main:Res:mainConvergenceCor}
\max_{i \leq n_m} \vert u_{n_m}(x_i) - u(x_i) \vert \to 0;
\end{equation}
\item $(\mu,u)$ is a minimizer of $\cF(\cdot)$;
\item the whole sequence $(\mu,u_n)$ converges to $(\mu,u)$ in $\TLp{2}$ and as in \eqref{eq:Main:Res:mainConvergenceCor}. 
\end{enumerate}
\item If $n\eps_n^{2s} \to \infty$, $\sup_{n \in \bbN} \Vert u_n \Vert_{\Lp{2}}$ is bounded and~\eqref{eq:Main:Ass:epsLBIllPosed} holds: 
\begin{enumerate}
\item $(\mu_n,u_n)$ has a subsequence $\{(\mu_{n_m},u_{n_m})\}_{m=1}^\infty$ converging to some $(\mu,u)$ in $\TLp{2}$;
\item $(\mu,u)$ is a minimizer of $\cG(\cdot)$.
\end{enumerate}
\end{enumerate}
\end{corollary}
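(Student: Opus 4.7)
The plan is straightforward: Corollary~\ref{cor:Main:Res:ConsFracLap} is a direct specialization of Theorem~\ref{thm:Main:Res:ConsFracLap} obtained by substituting the eigenfunction $\Lp{\infty}$-bound provided by Proposition~\ref{prop:Proofs:Back:DisReg:EVecAlt} into the abstract hypothesis on $\alpha$. Since the ill-posed case (item 2) has identical hypotheses and conclusions in both statements, no work is needed there; I would simply invoke Theorem~\ref{thm:Main:Res:ConsFracLap}(2) verbatim.

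For the well-posed case (item 1), the key step is to verify that the hypothesis $\Vert \psi_{n,k}\Vert_{\Lp{\infty}} \leq C_\psi \lambda_k^{\alpha}$ for $k \leq \ceil{K_n}$ is automatically satisfied with the explicit exponent $\alpha = d+1$, by appealing to Proposition~\ref{prop:Proofs:Back:DisReg:EVecAlt}. With this choice of $\alpha$ in hand, the threshold $s > 2\alpha + 2 + d/2$ appearing in Theorem~\ref{thm:Main:Res:ConsFracLap} becomes
\[
s > 2(d+1) + 2 + \tfrac{d}{2} = \tfrac{5d}{2} + 4,
\]
which is precisely the condition stated in the corollary. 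All remaining hypotheses of the corollary --- namely the structural assumptions~\ref{ass:Main:Ass:S1}--\ref{ass:Main:Ass:W2}, the smoothness assumption $\rho \in \Ck{\infty}$, the upper bound asserting that $n\eps_n^{s/2 - 1/2}$ is bounded, and the lower bound~\eqref{eq:Main:Ass:epsLBWellPosed} --- match those of Theorem~\ref{thm:Main:Res:ConsFracLap}(1) word for word, so no additional verification is required.

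Given these observations, the proof reduces to a single-line argument: apply Theorem~\ref{thm:Main:Res:ConsFracLap} with $\alpha = d+1$ (justified by Proposition~\ref{prop:Proofs:Back:DisReg:EVecAlt}), and the four subconclusions (a)--(d) on subsequential $\TLp{2}$-convergence, continuity and uniform convergence of the limit on $\Omega_{n_m}$, minimality of $(\mu,u)$ for $\cF$, and convergence of the whole sequence transfer directly. There is no genuine obstacle here --- the only minor bookkeeping is checking the arithmetic $2(d+1) + 2 + d/2 = 5d/2 + 4$, and confirming that Proposition~\ref{prop:Proofs:Back:DisReg:EVecAlt} indeed yields the exponent $d+1$ for all eigenvectors with index up to $\ceil{K_n}$. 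The real content of the result lies entirely in Theorem~\ref{thm:Main:Res:ConsFracLap} and Proposition~\ref{prop:Proofs:Back:DisReg:EVecAlt}; the corollary's role is to package them into a hypothesis-free statement by eliminating the abstract parameter $\alpha$.
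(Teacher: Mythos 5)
Your proposal is correct and is exactly the paper's own argument: the text immediately preceding the corollary explains that Proposition~\ref{prop:Proofs:Back:DisReg:EVecAlt} supplies the bound $\|\psi_{n,k}\|_{\Lp{\infty}}\leq C\lambda_k^{d+1}$, and substituting $\alpha=d+1$ into the threshold $s>2\alpha+2+d/2$ of Theorem~\ref{thm:Main:Res:ConsFracLap} gives $s>5d/2+4$, with the ill-posed case carried over verbatim.
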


\begin{remark} \label{rem:Main:Res:Sobolev}
Sobolev embeddings.
Suppose that we had $\alpha = 0$ in the statement of Theorem \ref{thm:Main:Res:ConsFracLap} (equivalently suppose that the discrete eigenfunctions are uniformly bounded) then we would require $s > \max\{d/2 + 2,2d+9\}$. The condition $s > d/2 + 2$ can also be considered from the point of view of Sobolev embeddings. By \cite[Lemma  4]{Stuart}, we know that this implies $\cH^s(\Omega) \subseteq \Ck{0,\gamma}(\Omega)$ for some $\gamma > 0$. This is a natural consequence of being in the well-posed regime as we set pointwise constraints in our continuum problem \eqref{eq:Main:Not:F}, hence requiring the minimizer of the latter in $\cH^s(\Omega)$ to be at least continuous. However, we note that the same embedding would apply with the tighter condition $s > d/2$. We therefore conjecture that the extra +2 term in our result is an artifact of our proof of Theorem \ref{thm:Proofs:Back:DisReg:DisRegAlt}.
\end{remark}

\begin{remark} \label{rem:Main:Res:Relationship}
The relationship between the asymptotics of $\eps_n$ and Sobolev embeddings. 
Assuming the conjecture in Remark \ref{rem:Main:Res:Sobolev}, one is able to deduce what the optimal bounds on $\eps_n$ should be. Indeed, using the equivalence discussed in Remark \ref{rem:Main:Res:LowerBoundS}, we expect 
\[
 \l \frac{1}{n} \r^{1/d}  \ll \eps_n \ll \l \frac{1}{n}\r^{1/2s}.
\]
This implies the following two things about our results for the well-posed regime: firstly, we should be able to only impose the sharper lower bound \eqref{eq:Main:Ass:epsLBIllPosed} which is related only to the connectivity of the graph; secondly, our requirement that $n\eps^{s/2-1/2}$ is bounded is not sharp and should be replaced with $n\eps_n^{2s}$ being bounded. On the other hand, the condition $n\eps_n^{2s} \to \infty$ is sharp for the ill-posed regime. The above considerations imply the existence of a gap in our analysis, namely the regime when
\[
\l \frac{1}{n} \r^{1/(s/2-1/2)} \ll \eps_n \ll \l \frac{1}{n}\r^{1/2s}.
\]
While we postulate that this still corresponds to the well-posed regime, we investigate this hypothesis in Section~\ref{sec:NumExp}.
\end{remark}

\begin{remark}\label{rem:Main:Res:LowerBoundGap}
Lower bound gap.
As was pointed out in Remark \ref{rem:Main:Res:Relationship}, replacing the lower bound $n^{-1/(d+4)}  \ll \eps_n$ with $n^{-1/d}  \ll \eps_n$ amounts to using \eqref{eq:Main:Ass:epsLBIllPosed} instead of \eqref{eq:Main:Ass:epsLBWellPosed}. By considering the proofs in Section \ref{sec:Proofs}, we note that this would require the reformulation of the results of Theorem \ref{thm:Proofs:Back:DisReg:DisRegCalder} and Theorem \ref{thm:Proofs:Back:DisReg:LinftyCalder}. Paraphrasing \cite[Remark 2.7]{Calder}, this does not seem unreasonable but represents non-trivial work. Given the latter fact, we will not consider this in greater detail in Section \ref{sec:NumExp}.  
\end{remark}

\begin{remark} \label{rem:Main:Res:Approximating}
Approximating Sobolev semi-norms and numerical schemes. 
As succinctly mentioned in Section \ref{sec:Intro}, Assumption \ref{ass:Main:Ass:L1} allows one to transition from finite differences to derivatives. Informally (see \cite{Calder_2018} for a rigorous approach to the problem based on Taylor expansion), consider \eqref{eq:Back:Rel:pLapDisProb} and assume that $\eps_n = \eps$ and $u_n = u \in \Ck{1}$. Then, as $n \to\infty$, the expression in \eqref{eq:Back:Rel:pLapDisProb} converges to
\begin{equation} \label{eq:Main:Res:pFiniteDifference}
\frac{1}{\eps^p} \int \int \frac{1}{\eps^d} \eta \l \frac{\vert y - x \vert}{\eps} \r \vert u(y) - u(x) \vert^p \, \dd y \dd x = \frac{1}{\eps^p} \int \int \eta(\vert z \vert) \vert u(x + \eps z) - u(x) \vert^p \, \dd z \dd x. 
\end{equation}
Now, assuming that $\eta(x) = 1/x^p$, we obtain
\[
\frac{1}{\eps^p} \int \int \frac{\vert u(x + \eps z) - u(x) \vert^p}{\vert z \vert^p} \, \dd z \dd x \approx \int \vert \nabla u(x) \vert^p \, \dd x.  
\]
This shows that \eqref{eq:Back:Rel:pLapDisProb} is essentially an approximation of a $\Wkp{1}{p}$ semi-norm by finite differences on a non-regular grid -- the graph: this result is made rigorous in \cite{Slepcev}. Disregarding the discrete-to-continuum aspect as well as the pointwise constraints in \cite{Slepcev}, i.e. if we consider \eqref{eq:Main:Res:pFiniteDifference} directly, it is well-known (see \cite{Bourgain01anotherlook}) that the latter approximates the $\Wkp{1}{p}$ norm with minimal conditions on the kernel $\eta$ which we partly require in Assumptions \ref{ass:Main:Ass:W1} and \ref{ass:Main:Ass:W2}. In fact, the finite differences can be used as a characterization of $\Wkp{1}{p}$ (see \cite[Theorem 11.75]{leoni2009first}).

Analogously, in the case of the fractional Laplacian, we know by \cite[Lemma 4]{Stuart} that $\EnergyS(\cdot)$ is essentially a $\Wkp{s}{2}$ semi-norm and therefore, by Theorem \ref{thm:Main:Res:ConsFracLap}, $\EnergySnepsn(\cdot)$ can be viewed as a finite difference approximation of $\Wkp{s}{2
}$ semi-norm on a non-regular grid. Continuing the parallel with $p$-Laplacian regularization, it is the subject of further research to generalize the results of \cite[Theorem 2]{Bourgain01anotherlook} to $\Wkp{s}{2}$ (or even $\Wkp{k}{p}$ with arbitrary $k$ and $p$).

Considering the above, the authors believe that both \cite[Theorem 2.1]{Slepcev} and Theorem \ref{thm:Main:Res:ConsFracLap} can be considered justifications for the use of numerical schemes based on discretizations on a graph for solving certain variational problems. Extending these results to more general variational problems will be undertaken in future research.
Furthermore, the above-mentioned results are asymptotic and it would of interest to obtain convergence rates (see \cite{Calder_2018}, \cite{calder2020rates} or \cite{https://doi.org/10.48550/arxiv.2010.08697} for examples of quantitative rates for such numerical schemes).
\end{remark}

\section{Proofs} \label{sec:Proofs}

In this section we present the proofs of the main result, Theorem~\ref{thm:Main:Res:ConsFracLap}.
We start by including some background results on Weyl's law, convergence of eigenvalues and discrete regularity. 
We then prove compactness of minimisers followed by the $\Gamma$-convergence of the energy $\cF_{n,\eps_n}$ in the well-posed and ill-posed regimes.
In Section~\ref{subsec:Proofs:Linfty} we prove minimisers are bounded in $\Lp{\infty}$. 
The final part of this section proves the main result, Theorem~\ref{thm:Main:Res:ConsFracLap}.

\subsection{Background Results} 

In this section we include some background results that will be useful in the sequel.
In particular, we start by recalling Weyl's law for the scaling of eigenvalues for weighted Laplacian's.
We then adapt some results on the convergence of eigenvalues to our setting.
In Section~\ref{subsubsec:Proofs:Back:DisReg} we include some discrete regularity results, in particular a discrete Morrey's type inequality.

\subsubsection{Weyl's Law} 

Weyl's law is well known in Euclidean and manifold settings with uniform density.
It states that eigenvalues $\{\lambda_k\}_{k\in\bbN}$ of the unweighted Laplacian, i.e. in Euclidean domains $\Delta:=\Delta_1=\sum_{i=1}^d \frac{\partial^2}{\partial x_i^2}$ and on manifolds $\Delta$ is the Laplace-Beltrami operator, scale as $k^{2/d}$.
The version of Weyl's law proven below is analogous to~\cite[Lemma 28]{Stuart} but adapted to our setting.

\begin{proposition}
\label{prop:Proofs:Back:Weyl:Law}
Weyl's law.
Let Assumptions~\ref{ass:Main:Ass:S1} and~\ref{ass:Main:Ass:M2} hold.
Define $\Delta_\rho$ by~\eqref{eq:Main:Not:WeightLap} and let $\{\lambda_k\}_{k\in\bbN}$ be the eigenvalues of $\Delta_\rho$ arranged in increasing order.
Then, there exists constants $0 < c_W \leq C_W$ such that \[ c_W k^{2/d} \leq  \lambda_k \leq C_W k^{2/d}. \]
\end{proposition}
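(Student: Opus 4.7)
The plan is to reduce the weighted Weyl estimate to the classical Weyl law on the torus via the Courant--Fischer min-max characterization, exploiting that $\rho$ is pinched between positive constants. First, I would observe that $\Delta_\rho$ is self-adjoint and non-negative on $\Lp{2}(\mu)$: integration by parts on the torus (where there is no boundary contribution) yields
\[
\langle \Delta_\rho u, v\rangle_{\Lp{2}(\mu)} = \int_\Omega \rho^2 \nabla u \cdot \nabla v \, \dd x,
\]
so the associated Dirichlet form is $Q(u) = \int_\Omega \rho^2 |\nabla u|^2 \, \dd x$ with form domain $\Hk{1}(\Omega)$ (which coincides with $\Wkp{1}{2}(\Omega,\mu)$ since $\rho$ is bounded above and below by positive constants by~\ref{ass:Main:Ass:M2}).

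Next, by the min-max principle applied to $Q$ on $\Lp{2}(\mu)$,
\[
\lambda_k = \min_{\substack{V \subset \Hk{1}(\Omega) \\ \dim V = k}} \; \max_{u \in V \setminus \{0\}} \frac{\int_\Omega \rho^2 |\nabla u|^2 \, \dd x}{\int_\Omega u^2 \rho \, \dd x}.
\]
Writing $0 < \rho_- := \min_\Omega \rho \leq \max_\Omega \rho =: \rho_+ < \infty$, the pointwise inequalities $\rho_-^2 |\nabla u|^2 \leq \rho^2 |\nabla u|^2 \leq \rho_+^2 |\nabla u|^2$ and $\rho_- u^2 \leq \rho u^2 \leq \rho_+ u^2$ show that the weighted Rayleigh quotient is sandwiched between $\rho_-^2/\rho_+$ and $\rho_+^2/\rho_-$ times the unweighted Rayleigh quotient $\int_\Omega |\nabla u|^2 \, \dd x \big/ \int_\Omega u^2 \, \dd x$. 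Taking the min-max over the same family of $k$-dimensional subspaces of $\Hk{1}(\Omega)$ therefore yields
\[
\frac{\rho_-^2}{\rho_+} \tilde\lambda_k \leq \lambda_k \leq \frac{\rho_+^2}{\rho_-} \tilde\lambda_k,
\]
where $\{\tilde\lambda_k\}_{k \in \bbN}$ are the eigenvalues of the standard Laplacian $-\Delta$ on $\Omega = \bbR^d/\bbZ^d$, arranged in increasing order.

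Finally, I would invoke the classical Weyl law on the torus: the spectrum of $-\Delta$ on $\bbR^d/\bbZ^d$ is precisely $\{4\pi^2 |m|^2 : m \in \bbZ^d\}$, and a standard lattice point count gives $\tilde c\, k^{2/d} \leq \tilde\lambda_k \leq \tilde C\, k^{2/d}$. Combining with the sandwich bound produces the claimed two-sided estimate with $c_W = \rho_-^2 \tilde c / \rho_+$ and $C_W = \rho_+^2 \tilde C / \rho_-$. There is no substantive obstacle here: the key analytic inputs are the uniform positivity of $\rho$ (which identifies form domains and bounds Rayleigh quotients uniformly in $u$) and the classical Weyl asymptotics, after which the argument is a routine application of Courant--Fischer. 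The only point worth double-checking is that $\Delta_\rho$ is genuinely self-adjoint with the stated quadratic form, which follows from the Friedrichs extension of the closed, symmetric, positive form $Q$ on $\Lp{2}(\mu)$.
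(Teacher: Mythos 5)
Your argument is correct and is essentially the same as the paper's: both prove the sandwich inequality $\frac{\rho_-^2}{\rho_+}\tilde\lambda_k \leq \lambda_k \leq \frac{\rho_+^2}{\rho_-}\tilde\lambda_k$ by comparing the weighted Rayleigh quotient to the unweighted one pointwise and applying Courant--Fischer, then invoking the classical Weyl law for the flat torus. The only cosmetic differences are that you use the min-max formulation while the paper uses the equivalent max-min one, and you justify the torus Weyl law by an explicit lattice-point count whereas the paper cites a reference.
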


\begin{proof}
Define $c = \min_{x\in\Omega}\rho(x)>0$ and $C = \max_{x\in\Omega} \rho(x)<+\infty$.
Define
\[ \cV_{k-1} = \{ V \subset \Wkp{1}{2}(\Omega) \spaceBar \dim(V) = k-1\} \]
and let $V \in \cV_{k-1}$.
For $u \in (\Wkp{1}{2} \cap V^\perp)\setminus \{0\} $, we therefore have
\[ \frac{c^2}{C} \frac{\int_\Omega \vert \nabla u \vert^2 \,\dd x  }{\int_\Omega u^2 \,\dd x} \leq \frac{\int_\Omega \vert \nabla u \vert^2 \rho^2 \,\dd x}{\int_\Omega u^2 \rho \, \dd x} \leq \frac{C^2}{c} \frac{\int_\Omega \vert \nabla u \vert^2 \, \dd x  }{\int_\Omega u^2 \,\dd x}. \]
From the latter, we can deduce that:
\begin{align*}
\frac{c^2}{C} \sup_{V \in \cV_{k-1}} \inf_{u \in (\Wkp{1}{2} \cap V^\perp)\setminus \{0\}} \frac{\int_\Omega \vert \nabla u \vert^2 \, \dd x  }{\int_\Omega u^2 \, \dd x} & \leq \sup_{V \in \cV_{k-1}} \inf_{u \in (\Wkp{1}{2} \cap V^\perp)\setminus \{0\}} \frac{\int_\Omega \vert \nabla u \vert^2 \rho^2 \, \dd x  }{\int_\Omega u^2 \rho \, \dd x} \\
 & \leq \frac{C^2}{c} \sup_{V \in \cV_{k-1}} \inf_{u \in (\Wkp{1}{2} \cap V^\perp)\setminus \{0\}} \frac{\int_\Omega \vert \nabla u \vert^2 \, \dd x}{\int_\Omega u^2 \, \dd x}.
\end{align*}
Since~\ref{ass:Main:Ass:S1} holds then the unweighted Laplacian, $\Delta_1$, is a compact self-adjoint operator, and hence we can apply the Courant–Fisher characterization of eigenvalues \cite[Max-Min theorem]{Chavel} to infer
\begin{equation} \label{eq:Proofs:Back:Weyl:EValBound} 
\frac{c^2}{C} \lambda_k^{\rmu} \leq \lambda_k \leq \frac{C^2}{c} \lambda_k^{\rmu}, 
\end{equation}
where $\lambda_k^{\rmu}$ is the $k$-th eigenvalue of $\Delta_1$. 
By~\cite[Corrolary p.~218]{Craioveanu}, we have that
\begin{equation} \label{eq:Proofs:Back:Weyl:UnweightedManifold}
\lambda_k^{\rmu} \sim k^{2/d}
\end{equation}
and hence, injecting~\eqref{eq:Proofs:Back:Weyl:UnweightedManifold} into \eqref{eq:Proofs:Back:Weyl:EValBound}, we obtain for $k$ large enough:\[ c_W k^{2/d} \leq  \lambda_k \leq C_W k^{2/d} \]
for some constants $c_W$ and $C_W$. 
\end{proof}

\subsubsection{Convergence Of Eigenvalues} 

Results, such as \cite[Theorem 4]{Trillos}, give convergence of eigenvalues $\lambda_{n,k}\to \lambda_k$ as $n\to\infty$ and moreover, the rate of convergence can be bounded for $k\ll n$.
In other words, there exists $J_n$ such that the limit as $n\to\infty$ of $\sup_{k\in\{1,2,\dots,J_n\}} \vert\lambda_{n,k} - \lambda_k\vert$ can be controlled.
In this section we will derive bounds on $J_n$.
In particular we will show that $J_n\gtrsim \eps_n^{-d}$. 

\begin{lemma}
\label{lem:Proofs:Back:EvalTails:Jn}
Existence of $J_n$.
Let Assumptions~\ref{ass:Main:Ass:S1}, \ref{ass:Main:Ass:M1}, \ref{ass:Main:Ass:M2}, \ref{ass:Main:Ass:W1}, \ref{ass:Main:Ass:W2}, \ref{ass:Main:Ass:D1}, and~\ref{ass:Main:Ass:L1} hold and assume $\eps_n$ satisfies the lower bound in~\eqref{eq:Main:Ass:epsLBIllPosed}. 
Then, for $n$ large enough, there exists an integer $J_n$ and positive constants $C_0,C_1, C_2, C_3$ such that:
\begin{enumerate}
\item $C_0 \eps_n^{-d} \geq J_n \geq C_1\eps_n^{-d}$;
\item $\vert \lambda_{n,k} - \lambda_{k} \vert \leq C_2\lambda_k\l\eps_n + \sqrt{\lambda_{k}}\eps_n + \frac{\dWp{\infty}(\mu_n,\mu)}{\eps_n}\r$ for all $k\in\{1,\hdots,J_n\}$, $\bbP$-a.e.;
\item $n \lambda_{n,J_n}^{-s} \leq C_3 n \eps^{2s}$, $\bbP$-a.e.
\end{enumerate}
\end{lemma}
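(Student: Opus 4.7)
The plan is to combine the cited eigenvalue convergence theorem from \cite{Trillos} with Weyl's law (Proposition~\ref{prop:Proofs:Back:Weyl:Law}) and the almost-sure $\Lp{\infty}$-transport bound from Theorem~\ref{thm:Back:TLp:LinftyMapsRate}. The quoted result of \cite[Theorem 4]{Trillos} produces exactly the error estimate of item (2), but only for indices $k$ for which the right-hand side makes sense, namely those $k$ with $\sqrt{\lambda_k}\,\eps_n$ sufficiently small. Weyl's law converts this into a bound on $k$: since $\lambda_k \leq C_W k^{2/d}$, the condition $\sqrt{\lambda_k}\,\eps_n \leq \delta$ is implied by $k \leq (\delta^2/C_W)^{d/2}\eps_n^{-d}$. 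This motivates defining $J_n = \lfloor C_1 \eps_n^{-d}\rfloor$ for a constant $C_1$ chosen small enough (depending only on $C_W$ and the implicit constant in the eigenvalue convergence theorem) so that the error in item (2) is bounded by $\tfrac{1}{2}\lambda_k$ for all $k \leq J_n$. This immediately yields item (1) with $C_0$ taken as any upper constant (e.g.\ $C_0 = C_1$).

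For the second ingredient I would verify that the term $\dWp{\infty}(\mu_n,\mu)/\eps_n$ does not destroy the argument. By Theorem~\ref{thm:Back:TLp:LinftyMapsRate}, $\bbP$-a.s.\ we have $\dWp{\infty}(\mu_n,\mu) \lesssim (\log n / n)^{1/d}$ (or the $d=2$ analogue), and the lower bound \eqref{eq:Main:Ass:epsLBIllPosed} on $\eps_n$ exactly ensures $\dWp{\infty}(\mu_n,\mu)/\eps_n \to 0$. Combined with $\eps_n \to 0$ and the already chosen control of $\sqrt{\lambda_k}\,\eps_n$ for $k \leq J_n$, this gives item (2) for all $k \leq J_n$ on a full probability event (obtained either directly from \cite[Theorem 4]{Trillos} or by Borel--Cantelli applied to its non-asymptotic version).

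Item (3) then follows from the other two. By Weyl's law, $\lambda_{J_n} \geq c_W J_n^{2/d} \geq c_W (C_1/2)^{2/d}\eps_n^{-2}$ for $n$ large, and by item (2) the relative error is at most $\tfrac{1}{2}$ at $k = J_n$, so
\begin{equation*}
\lambda_{n,J_n} \geq \tfrac{1}{2}\lambda_{J_n} \geq c'\eps_n^{-2}.
\end{equation*}
Raising to the power $-s$ and multiplying by $n$ yields $n\lambda_{n,J_n}^{-s} \leq (c')^{-s} n\eps_n^{2s}$, which is the desired bound with $C_3 = (c')^{-s}$.

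The main obstacle I foresee is constant-tracking in the choice of $C_1$: one must pick $C_1$ small enough that the prefactor $C_2\sqrt{\lambda_{J_n}}\,\eps_n$ appearing in item (2) is bounded by, say, $1/2$ uniformly in $n$, while simultaneously keeping $J_n \geq C_1\eps_n^{-d}$ large enough to be useful. This is a self-referential choice that hinges on the upper Weyl constant $C_W$ and the constant from \cite[Theorem 4]{Trillos}; once these are fixed, $C_1$ is determined, and the existence of a corresponding $C_0$ is automatic. No single step is deep, but the coupling between the threshold on $k$ for which the convergence theorem applies and the desired lower bound on $\lambda_{n,J_n}$ must be arranged consistently for the three items to hold simultaneously.
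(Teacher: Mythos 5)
Your proposal is correct and takes essentially the same approach as the paper: you define $J_n \sim \eps_n^{-d}$ with the constant chosen via the upper Weyl bound so that the threshold $\sqrt{\lambda_k}\,\eps_n < c_{\mathrm{eig}}$ from the cited eigenvalue-convergence theorem is met for all $k \leq J_n$, you invoke Theorem~\ref{thm:Back:TLp:LinftyMapsRate} together with the lower bound~\eqref{eq:Main:Ass:epsLBIllPosed} to control $\dWp{\infty}(\mu_n,\mu)/\eps_n$, and you obtain item (3) by combining the lower Weyl bound $\lambda_{J_n}\gtrsim\eps_n^{-2}$ with the controlled relative error $\lambda_{n,J_n}\geq\tfrac12\lambda_{J_n}$. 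The constant-tracking concern you flag at the end is exactly the point the paper handles by introducing the auxiliary constants $\delta$, $\delta'$, $D$, and it is resolved the way you describe.
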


\begin{proof}
By Assumptions~\ref{ass:Main:Ass:S1}, \ref{ass:Main:Ass:M1}, \ref{ass:Main:Ass:M2}, \ref{ass:Main:Ass:W1}, \ref{ass:Main:Ass:W2}, and~\ref{ass:Main:Ass:D1}, 
 we can apply \cite[Theorem 4]{Trillos}. 
In particular, there exists positive constants $c_{\mathrm{eig}},C_{\mathrm{eig}}$, $\eps_0$ and $c$ dependent on $\Omega$, $\rho$ and $\eta$ such that
\begin{equation} \label{eq:Proofs:Back:EvalTails:Trillos4}
\sqrt{\lambda_{k}}\eps < c_{\mathrm{eig}} \text{ and } c\dWp{\infty}(\mu_n,\mu) \leq \eps_n \leq \eps_0 \Rightarrow \vert \lambda_{n,k} - \lambda_k \vert \leq C_{\mathrm{eig}} \lambda_k \ls \eps_n + \sqrt{\lambda_k}\eps_n + \frac{\dWp{\infty}(\mu_n,\mu)}{\eps_n}  \rs.
\end{equation}
By Theorem~\ref{thm:Back:TLp:LinftyMapsRate} and Assumption~\ref{ass:Main:Ass:L1} and~\eqref{eq:Main:Ass:epsLBIllPosed} we can assume that $c\dWp{\infty}(\mu_n,\mu) \leq \eps_n \leq \eps_0$ holds for $n$ sufficiently large.
Without loss of generality we may assume that $C_{\mathrm{eig}} c_{\mathrm{eig}} \geq 1$ (we may always increase the value of $C_{\mathrm{eig}}$ in~\eqref{eq:Proofs:Back:EvalTails:Trillos4}).
By Assumptions~\ref{ass:Main:Ass:S1} and~\ref{ass:Main:Ass:M2} there exists constant $c_{\rmW}$ and $C_{\rmW}$ such that the conclusions of Proposition~\ref{prop:Proofs:Back:Weyl:Law} hold.
We choose constants $0 < \delta < 1$ and $D>0$ such that $\delta C_{\mathrm{eig}} c_{\mathrm{eig}} < 1$ and
\begin{equation} \label{eq:Proofs:Back:EvalTails:firstCaseK}
D < \delta c_{\mathrm{eig}}\frac{\sqrt{c_{\rmW}}}{\sqrt{C_{\rmW}}}.
\end{equation}
We define
\begin{equation} \label{eq:Proofs:Back:EvalTails:firstCaseJn}
J_n = \ceil{ \frac{D}{\eps_n\sqrt{c_{\rmW}}}}^d + 1 \lb 
\begin{array}{l} 
\geq \l\frac{D}{\eps_n \sqrt{c_{\rmW}}} \r^d =: \frac{C_1}{\eps_n^d} \\
\leq 2 \l\frac{D}{\eps_n \sqrt{c_{\rmW}}} \r^d =: \frac{C_0}{\eps_n^d}
\end{array} \rd
\end{equation}
which proves the first statement of the lemma. 

For the second statement we can estimate as follows: for $k\in\{1,\hdots,J_n\}$,
\begin{align}
\sqrt{\lambda_{k}} \eps_n & \leq \sqrt{C_{\rmW}} J_n^{1/d} \eps_n\label{eq:Proofs:Back:EvalTails:firstCaseUBineq1} \\
 & \leq \l \frac{D}{\sqrt{c_{\rmW}}\eps_n} + 1 \r \sqrt{C_{\rmW}} \eps_n \label{eq:Proofs:Back:EvalTails:firstCaseUBineq2} \\
 & = \frac{D\sqrt{C_{\rmW}}}{\sqrt{c_{\rmW}}} + \eps_n \sqrt{C_{\rmW}} \notag \\
 & < \delta c_{\mathrm{eig}} + \eps_n \sqrt{C_{\rmW}} \label{eq:Proofs:Back:EvalTails:firstCaseUBineq4}
\end{align}
where we used Proposition~\ref{prop:Proofs:Back:Weyl:Law} for~\eqref{eq:Proofs:Back:EvalTails:firstCaseUBineq1}, the upper bound in~\eqref{eq:Proofs:Back:EvalTails:firstCaseJn} for~\eqref{eq:Proofs:Back:EvalTails:firstCaseUBineq2}, and~\eqref{eq:Proofs:Back:EvalTails:firstCaseK} for~\eqref{eq:Proofs:Back:EvalTails:firstCaseUBineq4}.
Recall that $0<\delta<1$ so that $\delta c_{\mathrm{eig}} < c_{\mathrm{eig}}$ and, as $\eps_n \to 0$, for $n$ large enough, the second term in~\eqref{eq:Proofs:Back:EvalTails:firstCaseUBineq4} can be made arbitrarily small, so that we achieve:
\begin{equation} \label{eq:Proofs:Back:EvalTails:firstCaseUB}
\sqrt{\lambda_{k}} \eps_n < \delta^\prime c_{\mathrm{eig}} < c_{\mathrm{eig}}
\end{equation} 
for some $\delta <\delta^\prime<1$.
Inequality~\eqref{eq:Proofs:Back:EvalTails:firstCaseUB} will allow us to apply \eqref{eq:Proofs:Back:EvalTails:Trillos4} and hence proves the second statement of the lemma.
Also, note that $\delta^\prime$ can be chosen arbitrarily close to $\delta$ so that we might assume that
\begin{equation} \label{eq:Proofs:Back:EvalTails:firstCaseDeltaPrime}
\delta C_{\mathrm{eig}} c_{\mathrm{eig}} < \delta^\prime C_{\mathrm{eig}}c_{\mathrm{eig}} < 1.
\end{equation} 

For the final statement we let 
$\{T_n\}_{i=1}^\infty$ be the sequence of transport maps from $\mu$ to $\mu_n$ (that exists $\bbP$ almost surely by Theorem~\ref{thm:Back:TLp:LinftyMapsRate}) such that
\[ \begin{cases}
\limsup_{n \to \infty} \frac{n^{1/2} \Vert \Id - T_n \Vert_{\Lp{\infty}} }{\log(n)^{3/4}} \leq C & \text{if } d = 2, \\
\limsup_{n \to \infty} \frac{n^{1/d} \Vert \Id - T_n \Vert_{\Lp{\infty}}}{\log(n)^{1/d}} \leq C &\text{if } d \geq 3.
\end{cases} \]
In particular, we have the following estimate for $d \geq 3$ (and analogously for $d = 2$):
\begin{equation} \label{eq:Proofs:Back:EvalTails:ratioLimit}
\limsup_{n \to \infty} \frac{\dWp{\infty}(\mu_n,\mu)}{\eps_n} \leq \limsup_{n\to \infty} \frac{\Vert \Id- T_n \Vert_{\Lp{\infty}} n^{1/d}}{\log(n)^{1/d}} \frac{\log(n)^{1/d}}{n^{1/d}\eps_n} = 0
\end{equation}
where we used the fact that $ \dWp{\infty}(\mu,\mu_n) \leq \Vert \Id-T_n \Vert_{\Lp{\infty}}$ for the inequality and~\eqref{eq:Main:Ass:epsLBIllPosed} and~\eqref{eq:Back:TLp:LinftyMapsRate} for the equality.

We directly verify that
\begin{equation} \label{eq:Proofs:Back:EvalTails:firstCaseLB}
\sqrt{\lambda_{J_n}} \eps_n \geq \sqrt{c_{\rmW}} J_n^{1/d} \eps_n \geq D
\end{equation}
where we used Proposition~\ref{prop:Proofs:Back:Weyl:Law} for the first inequality and~\eqref{eq:Proofs:Back:EvalTails:firstCaseJn} for the second one. 
Let us now estimate as follows:
\begin{align}
A & := \frac{n}{\lambda_{n,J_n}^{s}} \leq n \lambda_{J_n}^{-s}\ls (1-o(1) - C_{\mathrm{eig}}\sqrt{\lambda_{J_n}}\eps_n\rs^{-s} \label{eq:Proofs:Back:EvalTails:firstCaseLimitIneq1} \\
 & \leq \frac{n \eps_n^{2s}}{D^{2s}} \ls 1 -  o(1)  - C_{\mathrm{eig}}\sqrt{\lambda_{J_n}}\eps_n\rs^{-s}\label{eq:Proofs:Back:EvalTails:firstCaseLimitIneq2}
\end{align}
where we used \eqref{eq:Proofs:Back:EvalTails:Trillos4} and \eqref{eq:Proofs:Back:EvalTails:ratioLimit} for \eqref{eq:Proofs:Back:EvalTails:firstCaseLimitIneq1}, and~\eqref{eq:Proofs:Back:EvalTails:firstCaseLB} for~\eqref{eq:Proofs:Back:EvalTails:firstCaseLimitIneq2}.
Now, using \eqref{eq:Proofs:Back:EvalTails:firstCaseUB} and \eqref{eq:Proofs:Back:EvalTails:firstCaseDeltaPrime} we deduce that for $n$ large enough
\begin{equation} \label{eq:Proofs:Back:EvalTails:firstCaseBoundSecondTerm}
1 - o(1) - C_{\mathrm{eig}}\sqrt{\lambda_{J_n}}\eps_n > 1 - o(1) - \delta^\prime c_{\mathrm{eig}}C_{\mathrm{eig}} > \delta_0
\end{equation}
for some $\delta_0>0$.
This is equivalent to
\begin{equation} \label{eq:Proofs:Back:EvalTails:firstCaseBound}
(1 - o(1) -C_{\mathrm{eig}}\sqrt{\lambda_{J_n}}\eps_n)^{-s} < \delta_0^{-s} 
\end{equation}
Finally, using \eqref{eq:Proofs:Back:EvalTails:firstCaseBound}, we obtain
\[ A \leq \frac{n\eps_n^{2s}}{D^{2s}\delta_0^s} =: C_3 n\eps_n^{2s} \]
as required.
\end{proof}

\begin{remark}
\label{rem:Proofs:Back:EvalTails:eigenvalueBounds}
Eigenvalue bounds for $k \leq J_n$.
Let $k\in\{1,\hdots,J_n\}$ then 
we can apply the second statement in Lemma~\ref{lem:Proofs:Back:EvalTails:Jn}, i.e. 
\begin{equation} \label{eq:Proofs:Back:EvalTails:theorem4Remark}
\lambda_{n,k} \geq \lambda_{k}\l 1 - o(1) -  C_2 \sqrt{\lambda_{k}}\eps_n \r \geq \lambda_{k}\l 1 - o(1) -  C_2 \sqrt{\lambda_{J_n}}\eps_n \r.
\end{equation}
Inserting~\eqref{eq:Proofs:Back:EvalTails:firstCaseBoundSecondTerm} (recalling that $C_2=C_{\mathrm{eig}}$) 
into~\eqref{eq:Proofs:Back:EvalTails:theorem4Remark}, we obtain
\[
\lambda_{n,k} \geq \lambda_{k}\delta_0.
\]
\end{remark}

\begin{corollary}
\label{cor:Proofs:Back:EvalTails:Kn}
Let Assumptions~\ref{ass:Main:Ass:S1}, \ref{ass:Main:Ass:M1}, \ref{ass:Main:Ass:M2}, \ref{ass:Main:Ass:W1}, \ref{ass:Main:Ass:W2}, \ref{ass:Main:Ass:D1}, and~\ref{ass:Main:Ass:L1} hold and assume $\eps_n$ satisfies the lower bound in~\eqref{eq:Main:Ass:epsLBIllPosed}. Let $K_n = \alpha \eps_n^{-d/2} + 1$ for some constant $\alpha > 0$ and assume that $s > 1$. Then, for $n$ large enough and a positive constant $C$: 
\[
\lambda_{n,\ceil{K_n}}^{-1} \leq C \eps_n, \text{ } \bbP\text{-a.e.}
\]
\end{corollary}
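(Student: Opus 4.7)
The plan is to combine Weyl's law for continuum eigenvalues (Proposition \ref{prop:Proofs:Back:Weyl:Law}) with the lower bound on discrete eigenvalues derived in Remark \ref{rem:Proofs:Back:EvalTails:eigenvalueBounds}. Since the remark gives $\lambda_{n,k}\geq \delta_0\lambda_k$ for every $k\leq J_n$, and Weyl's law gives $\lambda_k\geq c_W k^{2/d}$, the whole argument reduces to verifying the index condition $\ceil{K_n}\leq J_n$ and then reading off the scaling.

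First I would note that Lemma \ref{lem:Proofs:Back:EvalTails:Jn} gives $J_n\geq C_1\eps_n^{-d}$, while by construction $\ceil{K_n}\leq \alpha\eps_n^{-d/2}+2$. Since $d\geq 2$ (and $\eps_n\to 0$ by \ref{ass:Main:Ass:L1}) we have $\eps_n^{-d/2}=o(\eps_n^{-d})$, so for all $n$ sufficiently large
\[
\ceil{K_n}\leq 2\alpha\eps_n^{-d/2}\leq C_1\eps_n^{-d}\leq J_n.
\]
This allows us to invoke Remark \ref{rem:Proofs:Back:EvalTails:eigenvalueBounds} at $k=\ceil{K_n}$, yielding $\lambda_{n,\ceil{K_n}}\geq \delta_0\lambda_{\ceil{K_n}}$ almost surely.

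Next I apply Proposition \ref{prop:Proofs:Back:Weyl:Law} to the continuum eigenvalue:
\[
\lambda_{\ceil{K_n}}\geq c_W \ceil{K_n}^{2/d}\geq c_W\bigl(\alpha\eps_n^{-d/2}\bigr)^{2/d}=c_W\alpha^{2/d}\eps_n^{-1}.
\]
Combining the two inequalities gives $\lambda_{n,\ceil{K_n}}\geq \delta_0 c_W\alpha^{2/d}\eps_n^{-1}$, so that
\[
\lambda_{n,\ceil{K_n}}^{-1}\leq \frac{1}{\delta_0 c_W\alpha^{2/d}}\,\eps_n=:C\eps_n,
\]
which is the desired conclusion.

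The only genuinely delicate point is the index comparison $\ceil{K_n}\leq J_n$: everything else is a routine chaining of the quoted bounds. The hypothesis $s>1$ is not used for this particular estimate but is likely inherited from the context in which the corollary is applied (where the eigenvalue tail $n\lambda_{n,J_n}^{-s}$ of Lemma \ref{lem:Proofs:Back:EvalTails:Jn} enters); our proof works uniformly in $s$.
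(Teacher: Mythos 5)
Your proof is correct and follows essentially the same route as the paper: check $\ceil{K_n}\leq J_n$, invoke Remark~\ref{rem:Proofs:Back:EvalTails:eigenvalueBounds} to pass from $\lambda_{n,\ceil{K_n}}$ to $\lambda_{\ceil{K_n}}$, and then apply Weyl's law (Proposition~\ref{prop:Proofs:Back:Weyl:Law}) to bound $\lambda_{\ceil{K_n}}^{-1}$ by a multiple of $\eps_n$. You are a little more explicit about the index comparison (the paper simply asserts $\ceil{K_n}\leq K_n\leq J_n$ for $n$ large), and your observation that $s>1$ is not needed for this particular estimate is accurate.
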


\begin{proof}
In the proof $C>0$ ($c > 0$) will denote a constant that can be arbitrarily large (small), independent of $n$ and $k$ that may change from line to line.

Since for $n$ large enough $\ceil{K_n} \leq K_n \leq J_n$, we have that $\lambda_{n,\ceil{K_n}} \geq \lambda_{\ceil{K_n}} \delta_0$ by Remark \ref{rem:Proofs:Back:EvalTails:eigenvalueBounds}. Furthermore, by Proposition \ref{prop:Proofs:Back:Weyl:Law}: 
\[ 
\lambda_{\ceil{K_n}}^{1/2} \geq c \ceil{K_n}^{1/d} \geq c \eps_n^{-1/2}. 
\]
Rearranging the latter, we obtain: 
\[
\lambda_{n,\ceil{K_n}}^{-1} \leq C \lambda_{\ceil{K_n}}^{-1} \leq C \eps_n. \qedhere
\]
\end{proof}

\subsubsection{Discrete Regularity} \label{subsubsec:Proofs:Back:DisReg}

Similarly to \cite[Lemma 4.1]{Slepcev}, we can show a regularity result for the discrete functions.
Our result can be seen as a discrete analogue of a Morrey-type inequality.
We start by recalling the regularity result in~\cite{Calder} (more precisely we apply the Borel--Cantelli lemma to Theorem 2.1 in~\cite{Calder} -- as we do in the proof of Proposition \ref{prop:Proofs:Back:DisReg:EVecAlt} -- to deduce the same conclusions with probability one, rather than a high probability bound).

\begin{theorem}
\label{thm:Proofs:Back:DisReg:DisRegCalder}
Regularity of functions I~\cite[Theorem 2.1]{Calder}.
Assume Assumptions~\ref{ass:Main:Ass:S1}, \ref{ass:Main:Ass:M1}, \ref{ass:Main:Ass:M2}, \ref{ass:Main:Ass:D1}, \ref{ass:Main:Ass:W1} ~\ref{ass:Main:Ass:W2}, \ref{ass:Main:Ass:L1} hold and $\rho\in\Ck{2}$. Then, $\bbP$-a.s., there exists $C > 0$ such that for $n$ large enough, we have
\[
\vert u(x) - u(y) \vert \leq C(\Vert u \Vert_{\Lp{\infty}} + \Vert \Delta_{n,\eps_n}u \Vert_{\Lp{\infty}}) (\dTorus(x,y) + \eps_n)
\]
for any $u:\Omega_n \mapsto \bbR$ and $x,y \in \Omega_n$.
\end{theorem}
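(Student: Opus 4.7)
The result is quoted from \cite[Theorem 2.1]{Calder}, which establishes the bound with probability at least $1 - Cn^{-\beta}$ for some $\beta > 1$. My plan is therefore to sketch the strategy behind Calder's high-probability estimate and then upgrade it to the almost-sure statement via Borel--Cantelli, as the authors indicate.

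For the underlying estimate, I would first introduce the nonlocal population operator
\[
\cL_{\eps_n} v(x) = \frac{2}{\sigma_\eta \eps_n^2} \int_\Omega \eta_{\eps_n}(\vert x-y\vert)(v(x)-v(y))\rho(y)\,\dd y,
\]
obtained by replacing the empirical measure in $\Delta_{n,\eps_n}$ by $\mu$. The proof of the Morrey-type bound then splits into two pieces. First, a concentration-of-measure step (Bernstein's inequality combined with a union bound over $x \in \Omega_n$, using that $\eta$ is Lipschitz on $[0,1]$ by~\ref{ass:Main:Ass:W1}) controls $\max_{x \in \Omega_n}\vert \Delta_{n,\eps_n}u(x) - \cL_{\eps_n}u(x)\vert$ with probability at least $1 - Cn^{-\beta}$; the scaling of $\eps_n$ through~\ref{ass:Main:Ass:L1} ensures that each ball of radius $\eps_n$ collects sufficiently many samples for the variance estimate. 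Second, I would prove the Morrey estimate at the nonlocal level using a barrier argument: given $x,y \in \Omega$, build a quadratic test function $\phi(z) = A\dTorus(z,x)^2 + B\eps_n^2$ on a ball of radius $\dTorus(x,y) + C\eps_n$, exploit $\rho \in \Ck{2}$ to compute $\cL_{\eps_n}\phi$ explicitly via Taylor expansion (the constant depending only on $A$ and $\Vert \rho \Vert_{\Ck{2}}$), tune $A$ and $B$ so that $u(x) + \phi$ dominates $u$ on the nonlocal boundary layer of the ball, and conclude by the comparison principle for $\cL_{\eps_n}$.

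With the discrete estimate in hand at probability $1 - Cn^{-\beta}$ for $\beta > 1$, the Borel--Cantelli lemma applied to the events that the inequality fails at scale $n$ gives the almost-sure conclusion for all $n$ large enough, since $\sum_n n^{-\beta} < \infty$. This is the same upgrade pattern the authors plan to employ later in Proposition~\ref{prop:Proofs:Back:DisReg:EVecAlt}.

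The main obstacle I anticipate lies in the barrier / comparison step: because $\cL_{\eps_n}$ is nonlocal at scale $\eps_n$, the classical pointwise maximum-principle argument must be adapted to accommodate a boundary layer of width $\eps_n$, and it is precisely this layer that produces the additive $\eps_n$ term on the right-hand side (preventing genuine Lipschitz regularity at the discrete scale). A secondary difficulty is that the constant $C$ in the concentration step must be independent of $n$ and of $u$; this requires a careful covering argument for the class of test functions, again leveraging the Lipschitz regularity of $\eta$ from~\ref{ass:Main:Ass:W1}.
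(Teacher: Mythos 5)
The paper itself does not prove this result: it quotes \cite[Theorem~2.1]{Calder} and, as stated in the paragraph preceding the theorem, upgrades the high-probability bound there to an almost-sure one via Borel--Cantelli. You correctly identify this mechanism, so at the level of what the paper actually does, your plan matches.

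That said, there are two issues worth flagging. First, in your sketch of the underlying argument you place a concentration step \emph{before} the barrier argument and apply it to an arbitrary $u:\Omega_n\to\bbR$, claiming to control $\max_{x\in\Omega_n}\vert\Delta_{n,\eps_n}u(x)-\cL_{\eps_n}u(x)\vert$. This is not possible uniformly over arbitrary $u$: the discrepancy between the empirical and population operators depends on $u$ in a way that no Bernstein-plus-union-bound argument can tame without some a priori regularity. The concentration must instead be applied only to the fixed, smooth barrier $\phi$ (or to a finite/compact family of such barriers), after which the discrete maximum principle for $\Delta_{n,\eps_n}$ transfers the conclusion to arbitrary $u$. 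Your closing remark about a ``covering argument for the class of test functions'' gestures at this, but as stated the first step of your two-step decomposition would fail. Second, Calder's probability of failure is not of the form $Cn^{-\beta}$: it is an expression involving $\eps_n^{-a}\exp(-cn\eps_n^{d+4})$. Under Assumption~\ref{ass:Main:Ass:L1} alone ($\eps_n\to 0$ with no rate) that quantity need not be summable, so Borel--Cantelli does not apply; one also needs the lower bound \eqref{eq:Main:Ass:epsLBWellPosed} (Assumption~\ref{ass:Main:Ass:L2}) so that the failure probability decays fast enough. The theorem as stated in the paper lists only~\ref{ass:Main:Ass:L1}, but it is always invoked in contexts where~\eqref{eq:Main:Ass:epsLBWellPosed} holds; your write-up should make this dependence explicit rather than asserting a generic polynomial tail.
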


We recall a second result which will allow us to provide an $\Lp{\infty}$ bound on the eigenfunctions $\{\psi_{n,k}\}_{k=1}^{J_n}$.

\begin{theorem}
\label{thm:Proofs:Back:DisReg:LinftyCalder}
Bounds in $\Lp{\infty}$~\cite[Corollary 2.5]{Calder}.
Assume Assumptions~\ref{ass:Main:Ass:S1}, \ref{ass:Main:Ass:M1}, \ref{ass:Main:Ass:M2}, \ref{ass:Main:Ass:D1}, \ref{ass:Main:Ass:W1},\ref{ass:Main:Ass:W2} and~\ref{ass:Main:Ass:L1} hold and $\rho\in\Ck{2}$.
There exists $\eps_0>0$, $C,c>0$ such that for any $\Lambda>0$, with probability at least $1 - C\eps^{-6d}e^{-cn\eps^{d+4}} - 2ne^{-cn(\Lambda+1)^{-d}}$, we have $\|u\|_{\Lp{\infty}}\leq C(\Lambda+1)^{d+1}\|u\|_{\Lp{1}(\mu_n)}$ for all $0<\eps\leq \frac{\eps_0}{\Lambda+1}$ and $u:\Omega_n\to \bbR$ satisfying
\[ \frac{\|\Delta_{n,\eps} u\|_{\Lp{\infty}}}{\|u\|_{\Lp{\infty}}} \leq \Lambda. \]
\end{theorem}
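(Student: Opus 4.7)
The plan is to combine the Lipschitz-type control of Theorem~\ref{thm:Proofs:Back:DisReg:DisRegCalder} with a concentration estimate for the empirical measure near the maximum of $u$. The intuition is standard elliptic regularity in a discrete guise: a function whose Laplacian is small relative to its sup-norm cannot concentrate on a small set, and therefore its $\Lp{\infty}$ norm is controlled by its $\Lp{1}$ average.

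Normalise so that $\|u\|_{\Lp{\infty}}=1$ and pick $x^*\in\Omega_n$ with $u(x^*)=1$ (flipping the sign of $u$ if necessary). By hypothesis $\|\Delta_{n,\eps}u\|_{\Lp{\infty}}\leq \Lambda$, so Theorem~\ref{thm:Proofs:Back:DisReg:DisRegCalder} gives, for $n$ large,
\[ |u(x^*)-u(y)| \leq C(1+\Lambda)\bigl(\dTorus(x^*,y)+\eps\bigr), \qquad y\in\Omega_n. \]
Choose $R=c/(\Lambda+1)$ with $c$ small enough, and use the hypothesis $\eps\leq \eps_0/(\Lambda+1)$ (with $\eps_0$ small) to absorb the $\eps$-contribution. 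Then $u(y)\geq 1/2$ on the whole discrete ball $B(x^*,R)\cap\Omega_n$.

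The remaining ingredient is sampling concentration. Assumption~\ref{ass:Main:Ass:M2} ensures $\mu(B(x^*,R))\geq cR^d\sim (\Lambda+1)^{-d}$, and a multiplicative Chernoff bound for the Bernoulli random variables $\mathbf{1}_{\{x_i\in B(x^*,R)\}}$ yields $\mu_n(B(x^*,R))\geq \tfrac{1}{2}\mu(B(x^*,R))$ with probability at least $1-2e^{-cn(\Lambda+1)^{-d}}$; a union bound over the $n$ possible maximum points $x^*\in\Omega_n$ produces exactly the $2ne^{-cn(\Lambda+1)^{-d}}$ term in the advertised probability. On this event,
\[ \|u\|_{\Lp{1}(\mu_n)} \;\geq\; \tfrac{1}{2}\mu_n\bigl(B(x^*,R)\bigr) \;\geq\; c(\Lambda+1)^{-d}\|u\|_{\Lp{\infty}}, \]
which is the desired estimate up to the power of $\Lambda$.

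The main obstacle is recovering the advertised exponent $(\Lambda+1)^{d+1}$ rather than the $(\Lambda+1)^{d}$ produced naively by the scheme above, and simultaneously generating the remaining probability term $C\eps^{-6d}e^{-cn\eps^{d+4}}$. Both originate from the non-asymptotic form of Theorem~\ref{thm:Proofs:Back:DisReg:DisRegCalder}: its Lipschitz constant actually carries an extra dependence on $\Lambda$ through the $\eps_n^{-2}$ normalisation of $\Delta_{n,\eps}$ in~\eqref{eq:Main:Not:Setting:Deltaneps}, and its high-probability hypotheses propagate the $\eps^{-6d}e^{-cn\eps^{d+4}}$ failure probability. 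Careful bookkeeping of these dependencies, rather than any new conceptual ingredient, is what delivers the sharp exponent and probabilistic tail stated in the theorem.
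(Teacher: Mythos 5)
This theorem is not proved in the paper: it is cited verbatim as \cite[Corollary 2.5]{Calder}, and the paper's role is only to record the statement for later use. There is therefore no in-paper argument to compare your proof against, and a review has to assess your sketch on its own terms.

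Your scheme --- restrict to a ball around a maximiser on which the Lipschitz control forces $u\geq 1/2$, then use a Chernoff bound plus a union bound over the $n$ candidate maximisers to lower-bound the empirical mass of that ball --- is a sound and standard way to derive such an $\Lp{\infty}$-to-$\Lp{1}$ estimate, and matches the structure one would expect from Calder's paper. However, two things need fixing. First, you invoke Theorem~\ref{thm:Proofs:Back:DisReg:DisRegCalder}, which in this paper is the \emph{asymptotic} Borel--Cantelli consequence of Calder's Theorem 2.1: it holds $\bbP$-a.s.\ only for $n$ sufficiently large and carries no explicit failure probability. The statement you are proving is a \emph{non-asymptotic} high-probability bound for fixed $n$ and $\eps$, and the tail $C\eps^{-6d}e^{-cn\eps^{d+4}}$ must come from Calder's original quantitative Theorem 2.1, not from the a.s.\ version. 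You acknowledge this in the last paragraph but do not actually perform the substitution; as written the probability accounting is incomplete. Second, your worry about the exponent is misdirected: $(\Lambda+1)^{d}$ would be a \emph{stronger} conclusion than the stated $(\Lambda+1)^{d+1}$, not a weaker one, so ``recovering'' the advertised exponent is not an obstacle --- proving the stated bound with exponent $d$ implies it with exponent $d+1$ automatically. The real question, which you leave open, is whether the non-asymptotic Lipschitz constant from Calder's Theorem 2.1 really scales as $C(1+\Lambda)$ at the scale $\eps\lesssim(\Lambda+1)^{-1}$; if it carries an extra $(1+\Lambda)$ factor that is where the $d+1$ comes from, and this is the bookkeeping you would need to do to make the argument complete.
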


We now use Theorem~\ref{thm:Proofs:Back:DisReg:LinftyCalder} to derive an $\Lp{\infty}$ bound on the first $K_n$ eigenvectors, where we scale $K_n\sim \eps^{-d/2}_n$.

\begin{proposition}
\label{prop:Proofs:Back:DisReg:EVecAlt}
Assume Assumptions~\ref{ass:Main:Ass:S1}, \ref{ass:Main:Ass:M1}, \ref{ass:Main:Ass:M2}, \ref{ass:Main:Ass:D1}, \ref{ass:Main:Ass:W1}, \ref{ass:Main:Ass:W2} and~\ref{ass:Main:Ass:L1} hold, $\eps_n$ satisfies~\eqref{eq:Main:Ass:epsLBWellPosed}, and $\rho\in\Ck{2}$.
Let $\psi_{n,k}$ be the ordered eigenfunctions of $\Delta_{n,\eps_n}$ defined by~\eqref{eq:Main:Not:Setting:Deltaneps}.
Let $K_n=\alpha\eps_n^{-d/2} + 1$.
Then, $\bbP$-a.s., there exists $C>0$ and $\alpha_0$ such that for $n$ sufficiently large, and for all $k\in\{2,\hdots,\ceil{K_n}\}$, $\alpha\in(0,\alpha_0]$ we have $\|\psi_{n,k}\|_{\Lp{\infty}}\leq C\lambda_k^{d+1}$.
\end{proposition}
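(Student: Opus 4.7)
The plan is to apply Theorem \ref{thm:Proofs:Back:DisReg:LinftyCalder} separately to each eigenfunction $\psi_{n,k}$ with $k\in\{2,\dots,\lceil K_n\rceil\}$, then convert the resulting high-probability $\Lp{\infty}$ bound into an almost-sure statement via Borel--Cantelli. I would normalize so that $\|\psi_{n,k}\|_{\Lp{2}(\mu_n)}=1$; Cauchy--Schwarz together with $\mu_n\in\cP(\Omega)$ then gives $\|\psi_{n,k}\|_{\Lp{1}(\mu_n)}\leq 1$. Since $\Delta_{n,\eps_n}\psi_{n,k}=\lambda_{n,k}\psi_{n,k}$, the ratio $\|\Delta_{n,\eps_n}\psi_{n,k}\|_{\Lp{\infty}}/\|\psi_{n,k}\|_{\Lp{\infty}}$ equals $\lambda_{n,k}$, so the hypothesis of Theorem \ref{thm:Proofs:Back:DisReg:LinftyCalder} is met by any $\Lambda\geq \lambda_{n,k}$.

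Next I need a deterministic upper bound on $\lambda_{n,k}$ to make the probability estimates meaningful. Since $\lceil K_n\rceil\leq J_n$ for $n$ large, Lemma \ref{lem:Proofs:Back:EvalTails:Jn} together with Weyl's law (Proposition \ref{prop:Proofs:Back:Weyl:Law}) yields $\lambda_{n,k}\leq(1+o(1))\lambda_k$ eventually, and for $k\leq\lceil K_n\rceil$ we have $\lambda_k\leq C_W k^{2/d}\leq C_W K_n^{2/d}\leq C\eps_n^{-1}$. Hence for each $k$ I would take $\Lambda_k:=2\lambda_k$, which $\bbP$-a.s. dominates $\lambda_{n,k}$ for $n$ large. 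The side condition $\eps_n\leq\eps_0/(\Lambda_k+1)$ then translates to $\eps_n\lambda_k\leq \eps_0/2$, which by the Weyl bound above is equivalent to $C\alpha^{2/d}\leq\eps_0/2$. Choosing $\alpha_0:=(\eps_0/(2C))^{d/2}$ makes this hold uniformly for $k\leq\lceil K_n\rceil$ and $\alpha\in(0,\alpha_0]$. Theorem \ref{thm:Proofs:Back:DisReg:LinftyCalder} then gives
\[
\|\psi_{n,k}\|_{\Lp{\infty}}\leq C(2\lambda_k+1)^{d+1}\|\psi_{n,k}\|_{\Lp{1}(\mu_n)}\leq C(2\lambda_k+1)^{d+1}.
\]
Because $\lambda_k\geq c_W 2^{2/d}>0$ for $k\geq 2$, the factor $(2\lambda_k+1)^{d+1}$ is bounded by $C\lambda_k^{d+1}$, giving the desired bound.

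The main obstacle is the probabilistic bookkeeping. For each $k$ the failure probability in Theorem \ref{thm:Proofs:Back:DisReg:LinftyCalder} is
\[
p_{n,k}\leq C\eps_n^{-6d}e^{-cn\eps_n^{d+4}}+2ne^{-cn(\Lambda_k+1)^{-d}}.
\]
Taking a union bound over $k\in\{2,\dots,\lceil K_n\rceil\}$ introduces a factor of $K_n\sim\eps_n^{-d/2}$. The first contribution is bounded above by $C\eps_n^{-13d/2}e^{-cn\eps_n^{d+4}}$, which is summable in $n$ by the well-posed assumption \eqref{eq:Main:Ass:epsLBWellPosed}, since $n\eps_n^{d+4}/\log n\to\infty$ makes the exponential beat any polynomial. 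For the second contribution, observe that $(\Lambda_k+1)^{-d}\geq(2\lambda_{\lceil K_n\rceil}+1)^{-d}\gtrsim\eps_n^d$ by Weyl's law, so this term is bounded by $n\eps_n^{-d/2}e^{-cn\eps_n^d}$, which is again summable under \eqref{eq:Main:Ass:epsLBWellPosed} (since $\eps_n^{d+4}\leq \eps_n^d$). Borel--Cantelli then yields a $\bbP$-a.s. integer $N_0$ beyond which the bound holds simultaneously for all $k\in\{2,\dots,\lceil K_n\rceil\}$, completing the proof.
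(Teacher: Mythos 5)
Your proposal is correct and follows essentially the same route as the paper: you invoke Theorem~\ref{thm:Proofs:Back:DisReg:LinftyCalder} with a $k$-dependent choice of $\Lambda$ of the form (constant)$\times\lambda_k$, bound $\lambda_{n,k}$ in terms of $\lambda_k$ via Lemma~\ref{lem:Proofs:Back:EvalTails:Jn} and Weyl's law, verify the side condition $\eps_n\leq\eps_0/(\Lambda+1)$ by taking $\alpha$ small, and convert the high-probability bound to an almost-sure one via Borel--Cantelli. The one place you are actually more careful than the paper is the union bound over $k\in\{2,\dots,\lceil K_n\rceil\}$: since the proposition asserts the bound simultaneously for a growing range of eigenfunctions, the failure probabilities must be summed over $k$ (picking up a factor $K_n\sim\eps_n^{-d/2}$), and you correctly observe that this extra polynomial factor is still absorbed by the exponentials under \eqref{eq:Main:Ass:epsLBWellPosed}. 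The paper states the failure probability only for a fixed $k$ and then applies Borel--Cantelli in $n$ without explicitly aggregating over $k$, so your version is the more complete writeup of the same argument.
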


\begin{proof}
In the proof $C>0$ ($c>0$) will denote a constant that can be arbitrarily large (small), independent of $n$ and $k$ that may change from line to line.
Our choice of $K_n$ implies that (for $n$ sufficiently large, $\bbP$-a.s.) $K_n\leq J_n$ where $J_n$ is defined in Lemma~\ref{lem:Proofs:Back:EvalTails:Jn} as well as $\lambda_{\ceil{K_n}} \sim \alpha^{2/d}\eps_n^{-1}$ by Proposition \ref{prop:Proofs:Back:Weyl:Law}.
By Lemma~\ref{lem:Proofs:Back:EvalTails:Jn} and~Proposition~\ref{prop:Proofs:Back:Weyl:Law} we have, for any $k\in\{1,\hdots,\ceil{K_n}\}$,
\begin{align*}
\lambda_{n,k} & \leq \lambda_k \l 1 + C \l \eps_n + \sqrt{\lambda_k}\eps_n + \frac{\dWp{\infty}(\mu_n,\mu)}{\eps_n}\r \r \\
 & \leq \lambda_k \l 1 + \l \eps_n + \sqrt{\lambda_{\ceil{K_n}}}\eps_n + \frac{\dWp{\infty}(\mu_n,\mu)}{\eps_n}\r \r \\
 & \leq \lambda_k  \l 1 + C\l \eps_n + \alpha^{1/d}\sqrt{\eps_n} + \frac{\dWp{\infty}(\mu_n,\mu)}{\eps_n}\r \r \\
 & \leq C\lambda_k
\end{align*}
for $n$ sufficiently large.
We choose 
$\Lambda=C\lambda_k$
in Theorem~\ref{thm:Proofs:Back:DisReg:LinftyCalder}.
Therefore, assuming 
$\eps_n\leq \frac{\eps_0}{C\lambda_k + 1}$
and since 
\[ \frac{\|\Delta_{n,\eps_n}\psi_{n,k}\|_{\Lp{\infty}}}{\|\psi_{n,k}\|_{\Lp{\infty}}} = \lambda_{n,k} \leq C\lambda_k = \Lambda, \] 
we have
\[ \|\psi_{n,k}\|_{\Lp{\infty}} \leq C^\prime \l C\lambda_k +1 \r^{d+1} \] 
with probability at least $1 - C\eps_n^{-6d}e^{-cn\eps_n^{d+4}} - 2ne^{-cn(C\lambda_{k}+1)^{-d}}$. 
Using the fact that $\lambda_{\ceil{K_n}} \sim \alpha^{2/d}\eps_n^{-1}$, we can simplify. 
Indeed, the condition on $\eps_n$ is implied by 
\begin{equation} \label{eq:Proofs:Back:DisReg:epsConAlt2}
C\alpha_0^{\frac{2}{d}}+\eps_n\leq \eps_0.
\end{equation}
Since $\eps_n\to 0$ then we can choose $\alpha_0$ sufficiently small so that~\eqref{eq:Proofs:Back:DisReg:epsConAlt2} holds for $n$ sufficiently large.
We now fix $\alpha_0$ and absorb it into our constants $C,c$.
We can write
\[ \|\psi_{n,k}\|_{\Lp{\infty}} \leq C \lambda_k^{d+1} \]
with probability at least $1 - C\eps_n^{-6d}e^{-cn\eps_n^{d+4}} - 2ne^{-cn(C\lambda_{k}+1)^{-d}}$. 
We note that
\begin{align*}
\frac{n}{(C\lambda_{k}+1)^{d}} & \geq \frac{n}{(C\lambda_{\ceil{K_n}}+1)^{d}} \\
 & \geq \frac{n}{(C\eps_n^{-1} +1)^{d}} \\
 & \geq Cn\eps^{d}.
\end{align*}
The assumption that $\frac{n\eps_n^{d+4}}{\log n}\gg 1$ implies that we can bound $\eps_n^{-6d} e^{-cn\eps_n^{d+4}} \leq \eps_n^A$ for any $A$ we choose and for $n$ sufficiently large (where ``sufficiently large'' depends on the choice of $A$).
We choose $A$ so that 
$\sum_{n=1}^d \eps_n^A<+\infty$
and therefore by the Borel-Cantelli lemma we can conclude that, $\bbP$-a.s. the result holds for $n$ sufficiently large.
\end{proof}

Using the above results we derive a second regularity result better suited to our setting. While in Proposition \ref{prop:Proofs:Back:DisReg:EVecAlt} we showed that $\|\psi_{n,k}\|_{\Lp{\infty}}\leq C\lambda_{k}^{d+1}$, we state the following results for an estimate of the form $\|\psi_{n,k}\|_{\Lp{\infty}}\leq C_\psi\lambda_{k}^{\alpha}$ for some $\alpha > 0$.

\begin{theorem}
\label{thm:Proofs:Back:DisReg:DisRegAlt}
Regularity of functions II.
Assume Assumptions~\ref{ass:Main:Ass:S1}, \ref{ass:Main:Ass:M1}, \ref{ass:Main:Ass:M2}, \ref{ass:Main:Ass:D1}, \ref{ass:Main:Ass:W1}, \ref{ass:Main:Ass:W2} and~\ref{ass:Main:Ass:L1} hold, $\eps_n$ satisfies~\eqref{eq:Main:Ass:epsLBWellPosed}, and $\rho\in\Ck{2}$.
Let $K_n$ be as in Proposition \ref{prop:Proofs:Back:DisReg:EVecAlt} and assume that $\Vert \psi_{n,k} \Vert_{\Lp{\infty}} \leq C_{\psi} \lambda_{k}^{\alpha}$ for all $k \leq \ceil{K_n}$ and some $\alpha > 0$. Let $s > 2\alpha + 2 + d/2$.
Then, for any $\gamma > 0$, there exists $C>0$ such that, $\bbP$-a.s., for $n$ sufficiently large we have 
\begin{equation} \label{eq:Proofs:Back:DisReg:DisReg}
\vert u_n(x_i) - u_n(x_j)  \vert \leq C (\dTorus(x_i,x_j) + \eps_n) \l \sqrt{\EnergySnepsn(u_n)} + \Vert u_n \Vert_{\Lp{2}} \r + C \gamma n \eps_n^{s/2 + 1/2} \sqrt{\EnergySnepsn(u_n)}
\end{equation} 
for all $x_i,x_j\in\Omega_n$ with $ \dTorus(x_i,x_j) \leq \gamma \eps_n$ and any $u_n:\Omega_n\to \bbR$.
\end{theorem}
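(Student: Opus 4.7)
The plan is to decompose $u_n$ along the eigenbasis of $\Delta_{n,\eps_n}$ with cutoff at $\lceil K_n \rceil$ where $K_n = \alpha \eps_n^{-d/2} + 1$ as in Proposition \ref{prop:Proofs:Back:DisReg:EVecAlt}, writing
\[
u_n = u_n^{\mathrm{low}} + u_n^{\mathrm{high}}, \qquad u_n^{\mathrm{low}} := \sum_{k=1}^{\lceil K_n \rceil} \langle u_n, \psi_{n,k}\rangle_{\Lp{2}(\mu_n)} \psi_{n,k}.
\]
The low-frequency part is Lipschitz enough that Theorem \ref{thm:Proofs:Back:DisReg:DisRegCalder} applies, while the high-frequency part has small $\Lp{2}(\mu_n)$ mass because the energy concentrates in high eigenmodes.

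For the low-frequency part, I would bound $\Vert u_n^{\mathrm{low}} \Vert_{\Lp{\infty}}$ and $\Vert \Delta_{n,\eps_n} u_n^{\mathrm{low}} \Vert_{\Lp{\infty}}$ by the triangle inequality and Cauchy--Schwarz. The $k=1$ term is handled separately: $\lambda_{n,1} = 0$ and $\psi_{n,1} \equiv 1$, so it contributes $\Vert u_n \Vert_{\Lp{2}(\mu_n)}$ to the $\Lp{\infty}$ bound and nothing to the $\Delta_{n,\eps_n} u_n^{\mathrm{low}}$ bound. For $k \geq 2$, invoking $\Vert \psi_{n,k} \Vert_{\Lp{\infty}} \leq C_\psi \lambda_k^\alpha$ and Cauchy--Schwarz in the energy pairing reduces matters to controlling
\[
\sum_{k=2}^{\infty} \lambda_k^{2\alpha}\, \lambda_{n,k}^{-s} \quad \text{and} \quad \sum_{k=2}^{\infty} \lambda_k^{2\alpha}\, \lambda_{n,k}^{2-s}.
\]
Using Remark \ref{rem:Proofs:Back:EvalTails:eigenvalueBounds} to write $\lambda_{n,k} \gtrsim \lambda_k$ and Proposition \ref{prop:Proofs:Back:Weyl:Law} to write $\lambda_k \sim k^{2/d}$, these tails become $\sum_k k^{(4\alpha - 2s)/d}$ and $\sum_k k^{(4\alpha + 4 - 2s)/d}$, the second of which converges exactly when $s > 2\alpha + 2 + d/2$. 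This yields $\Vert u_n^{\mathrm{low}} \Vert_{\Lp{\infty}} + \Vert \Delta_{n,\eps_n} u_n^{\mathrm{low}} \Vert_{\Lp{\infty}} \leq C\bigl(\Vert u_n \Vert_{\Lp{2}} + \sqrt{\EnergySnepsn(u_n)}\bigr)$, and Theorem \ref{thm:Proofs:Back:DisReg:DisRegCalder} then produces the first term of \eqref{eq:Proofs:Back:DisReg:DisReg}.

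For the high-frequency part, I would use Parseval together with Corollary \ref{cor:Proofs:Back:EvalTails:Kn}:
\[
\Vert u_n^{\mathrm{high}} \Vert_{\Lp{2}(\mu_n)}^2 = \sum_{k > \lceil K_n\rceil} \vert \langle u_n, \psi_{n,k}\rangle_{\Lp{2}(\mu_n)} \vert^2 \leq \lambda_{n,\lceil K_n\rceil + 1}^{-s}\, \EnergySnepsn(u_n) \leq C \eps_n^s \EnergySnepsn(u_n).
\]
Converting to $\Lp{\infty}$ via the elementary bound $\Vert v \Vert_{\Lp{\infty}} \leq \sqrt{n} \Vert v \Vert_{\Lp{2}(\mu_n)}$ gives $\vert u_n^{\mathrm{high}}(x_i) - u_n^{\mathrm{high}}(x_j) \vert \leq C\sqrt{n}\, \eps_n^{s/2} \sqrt{\EnergySnepsn(u_n)}$. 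Since \eqref{eq:Main:Ass:epsLBWellPosed} forces $n\eps_n \to \infty$, for $n$ sufficiently large we have $1 \leq \gamma \sqrt{n\eps_n}$, hence $\sqrt{n}\, \eps_n^{s/2} \leq \gamma n \eps_n^{s/2 + 1/2}$, matching the second term of \eqref{eq:Proofs:Back:DisReg:DisReg}.

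The principal technical point is ensuring that the tail series $\sum_{k\geq 2} \lambda_k^{2\alpha+2-s}$ converges uniformly in $n$, which is exactly what pins down the sharp threshold $s > 2\alpha + 2 + d/2$. The extra $+2$ over the naive Sobolev embedding threshold $s > 2\alpha + d/2$ is unavoidable in this approach because Theorem \ref{thm:Proofs:Back:DisReg:DisRegCalder} demands $\Lp{\infty}$ control of $\Delta_{n,\eps_n} u_n^{\mathrm{low}}$, which introduces an additional factor of $\lambda_{n,k}^2$ inside the sum. I anticipate no other substantive obstacle: the rest of the argument is a matter of carefully bookkeeping the Cauchy--Schwarz factors, combining Remark \ref{rem:Proofs:Back:EvalTails:eigenvalueBounds} with Weyl's law, and exploiting the fact that under \eqref{eq:Main:Ass:epsLBWellPosed} our tighter high-frequency bound dominates the stated looser one.
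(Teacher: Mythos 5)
Your proposal is correct, and the low-frequency part is essentially the same computation as the paper's (you apply Theorem \ref{thm:Proofs:Back:DisReg:DisRegCalder} once to $u_n^{\mathrm{low}}$ after bounding $\Vert u_n^{\mathrm{low}} \Vert_{\Lp{\infty}}$ and $\Vert \Delta_{n,\eps_n} u_n^{\mathrm{low}} \Vert_{\Lp{\infty}}$; the paper applies it to each $\psi_{n,k}$ and sums. Since $\Vert \Delta_{n,\eps_n} u_n^{\mathrm{low}}\Vert_{\Lp{\infty}} \leq \sum_k \lambda_{n,k}\vert\langle u_n,\psi_{n,k}\rangle\vert \Vert\psi_{n,k}\Vert_{\Lp{\infty}}$, both lead to identical Cauchy--Schwarz tails $\sum_k \lambda_k^{2\alpha}\lambda_{n,k}^{-s}$ and $\sum_k \lambda_k^{2\alpha}\lambda_{n,k}^{2-s}$, so the thresholds on $s$ come out the same.)

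Where you genuinely diverge is the high-frequency tail, and there you are doing something cleaner and strictly sharper than the paper. The paper bounds $\vert\psi_{n,k}(x_i^*)-\psi_{n,k}(x_j^*)\vert$ individually via the finite-difference estimate \cite[Lemma 4.1]{Slepcev} (which uses the constraint $\dTorus(x_i,x_j)\leq\gamma\eps_n$ to obtain $\lesssim\gamma\sqrt{n}\,\eps_n\lambda_{n,k}^{1/2}$) and then applies Cauchy--Schwarz with $\lambda_{n,k}^{1-s}$ factors, landing on $C\gamma n\eps_n^{s/2+1/2}\sqrt{\EnergySnepsn(u_n)}$. You instead bound the entire tail $u_n^{\mathrm{high}}$ at once via Parseval, Corollary \ref{cor:Proofs:Back:EvalTails:Kn}, and the trivial $\Vert v\Vert_{\Lp{\infty}}\leq\sqrt{n}\Vert v\Vert_{\Lp{2}(\mu_n)}$, which ignores the distance constraint entirely and yields $C\sqrt{n}\,\eps_n^{s/2}\sqrt{\EnergySnepsn(u_n)}$ — a factor $1/\sqrt{n\eps_n}$ smaller than the stated term, hence dominated by $\gamma n\eps_n^{s/2+1/2}$ once $n\eps_n\geq1/\gamma^2$, which you correctly extract from \eqref{eq:Main:Ass:epsLBWellPosed}. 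Two remarks. First, since $n\eps_n^{d+4}\to\infty$ and $\eps_n<1$ only give $n\eps_n\to\infty$ because $d+4\geq1$; you should make that one-line implication explicit rather than asserting it. Second, and more interestingly, your sharper bound propagates: chaining it as in Corollary \ref{cor:Proofs:Back:DisReg:GlobReg} would replace $n\eps_n^{s/2-1/2}$ by $\sqrt{n}\,\eps_n^{s/2-1}$, which for $s>3$ gives a weaker upper-bound requirement on $\eps_n$ (namely $\eps_n\lesssim n^{-1/(s-2)}$ rather than $n^{-2/(s-1)}$). That you recover the statement by deliberately relaxing your estimate to match the displayed form is fine, but it is worth noting that this route would in fact mildly improve the range of admissible $\eps_n$ in Theorem \ref{thm:Main:Res:ConsFracLap}.
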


\begin{proof}
In the proof $C>0$ will denote a constant that can be arbitrarily large, independent of $n$ and $k$ that may change from line to line.
With probability one, we can assume that the conclusions of Lemma~\ref{lem:Proofs:Back:EvalTails:Jn}, Remark~\ref{rem:Proofs:Back:EvalTails:eigenvalueBounds},
Corollary~\ref{cor:Proofs:Back:EvalTails:Kn}, 
Theorem~\ref{thm:Proofs:Back:DisReg:DisRegCalder}, Theorem~\ref{thm:Proofs:Back:DisReg:LinftyCalder}
and Proposition \ref{prop:Proofs:Back:DisReg:EVecAlt} hold.

Let $x_i,x_j \in \Omega_n$ with $ \dTorus(x_i,x_j) \leq \gamma \eps_n$. We note that $\dTorus(x_i,x_j)$ is the length of the shortest path between the equivalence classes of $x_i$ and $x_j$ in $\bbR^d/\mathbb{Z}^d$. Hence, there exists points $x_i^* \in x_i + \mathbb{Z}^d \subseteq \mathbb{R}^d$ and $x_j^* \in x_j + \mathbb{Z}^d \subseteq \mathbb{R}^d$ such that $\dTorus(x_i,x_j) = \Vert x_i^* - x_j^* \Vert$.

With $K_n$ from Proposition \ref{prop:Proofs:Back:DisReg:EVecAlt}, we start by estimating: 
\begin{align}
\vert u_n(x_i^*) - u_n(x_j^*) \vert & = \la \sum_{k=1}^n \langle u_n, \psi_{n,k}\rangle (\psi_{n,k}(x_i^*) - \psi_{n,k}(x_j^*)) \ra \notag \\
 & \leq \sum_{k=1}^{\ceil{K_n}-1} \vert \langle u_n, \psi_{n,k}\rangle \vert \vert \psi_{n,k}(x_i^*) - \psi_{n,k}(x_j^*) \vert + \sum_{k=\ceil{K_n} }^{n} \vert \langle u_n, \psi_{n,k}\rangle \vert \vert \psi_{n,k}(x_i^*) - \psi_{n,k}(x_j^*) \vert \notag \\
 & \leq C \sum_{k=1}^{\ceil{K_n}-1}\Vert \psi_{n,k}\Vert_{\Lp{\infty}} \vert \langle u_n, \psi_{n,k}\rangle \vert (\Vert x_i^* - x_j^* \Vert + \eps_n) \notag \\
 & \qquad + C \sum_{k=1}^{\ceil{K_n}-1} \lambda_{n,k} \Vert \psi_{n,k}\Vert_{\Lp{\infty}} \vert \langle u_n, \psi_{n,k}\rangle \vert (\Vert x_i^* - x_j^* \Vert + \eps_n) \label{eq:UijSecondInequality}\\
 & \qquad + \sum_{k=\ceil{K_n} }^{n} \vert \langle u_n, \psi_{n,k}\rangle \vert \vert \psi_{n,k}(x_i^*) - \psi_{n,k}(x_j^*) \vert \notag \\
 &=: A + B + D, \notag
\end{align}
where we use Theorem~\ref{thm:Proofs:Back:DisReg:DisRegCalder} 
for the eigenfunctions $\psi_{n,k}$ in~\eqref{eq:UijSecondInequality}.

We now proceed to bound the terms $A$, $B$ and $D$ individually.
Starting with $A$, we have
\begin{align}
A &= C \sum_{k=1}^{\ceil{K_n}-1} \Vert \psi_{n,k}\Vert_{\Lp{\infty}} \vert \langle u_n, \psi_{n,k}\rangle \vert (\Vert x_i^* - x_j^* \Vert + \eps_n) \notag \\
&= C (\Vert x_i^* - x_j^* \Vert + \eps_n) \left[ \sum_{k=2}^{\ceil{K_n}-1} \Vert \psi_{n,k}\Vert_{\Lp{\infty}} \vert \langle u_n, \psi_{n,k}\rangle \vert  + \Vert \psi_{n,1} \Vert_{\Lp{\infty}} \vert \langle u_n, \psi_{n,1}\rangle \vert \right] \notag \\
&\leq C (\Vert x_i^* - x_j^* \Vert + \eps_n) \left[ \sum_{k=2}^{\ceil{K_n}-1} \lambda_k^{\alpha} \vert \langle u_n, \psi_{n,k}\rangle \vert  + \Vert u_n \Vert_{\Lp{2}} \right] \label{eq:Proofs:Back:DisReg:A3} \\
&= C (\Vert x_i^* - x_j^* \Vert + \eps_n) \left[ \sum_{k=2}^{\ceil{K_n}-1} \lambda^{s/2}_{n,k} \vert \langle u_n, \psi_{n,k}\rangle \vert \lambda_k^{\alpha} \lambda_{n,k}^{-s/2} + \Vert u_n \Vert_{\Lp{2}} \right] \notag  \\
&\leq C (\Vert x_i^* - x_j^* \Vert + \eps_n) \left[  \sqrt{\EnergySnepsn(u_n)} \sqrt{\sum_{k=2}^{\ceil{K_n}-1} \lambda_{n,k}^{-s} \lambda_k^{2\alpha}}  + \Vert u_n \Vert_{\Lp{2}} \right] \notag\\
&\leq C (\Vert x_i^* - x_j^* \Vert + \eps_n) \left[ \sqrt{\EnergySnepsn(u_n)} \sqrt{\sum_{k=2}^{\infty} \lambda_k^{2\alpha - s}} +  \Vert u_n \Vert_{\Lp{2}} \right] \label{eq:Proofs:Back:DisReg:A6} \\
&\leq C (\Vert x_i^* - x_j^* \Vert + \eps_n) \left[\sqrt{\EnergySnepsn(u_n)} \sqrt{\sum_{k=2}^{\infty} k^{(2/d)(2\alpha - s)}} + \Vert u_n \Vert_{\Lp{2}} \right] \label{eq:Proofs:Back:DisReg:A7}
\end{align}
where we use the fact that $\Vert \psi_{n,k} \Vert_{\Lp{\infty}} \leq C_\psi \lambda^{\alpha}_{k}$ and Theorem \cite[Theorem 2.6]{Calder} for \eqref{eq:Proofs:Back:DisReg:A3},
the fact that $K_n \leq J_n$ and Remark \ref{rem:Proofs:Back:EvalTails:eigenvalueBounds} for \eqref{eq:Proofs:Back:DisReg:A6}
and Proposition \ref{prop:Proofs:Back:Weyl:Law} for \eqref{eq:Proofs:Back:DisReg:A7}. Since our assumption on $s$ implies $s > 2\alpha + d/2$, we finally obtain: \begin{equation} \label{eq:Proofs:Back:Disreg:A}
    A \leq C (\Vert x_i^* - x_j^* \Vert + \eps_n) \left[\sqrt{\EnergySnepsn(u_n)} + \Vert u_n \Vert_{\Lp{2}} \right]. 
\end{equation}

Similarly,
\begin{align}
    B  &=  C \sum_{k=1}^{\ceil{K_n}-1} \lambda_{n,k} \Vert \psi_{n,k}\Vert_{\Lp{\infty}} \vert \langle u_n, \psi_{n,k}\rangle \vert (\Vert x_i^* - x_j^* \Vert + \eps_n) \notag \\
    &\leq  C (\Vert x_i^* - x_j^* \Vert + \eps_n)  \sum_{k=1}^{\ceil{K_n}-1} \lambda_{n,k}^{s/2} \lambda_k^{\alpha} \vert \langle u_n, \psi_{n,k}\rangle \vert \lambda_{n,k}^{1-s/2} \label{eq:Proofs:Back:DisReg:B2} \\
    &\leq C  (\Vert x_i^* - x_j^* \Vert + \eps_n) \sqrt{\EnergySnepsn(u_n)} \sqrt{\sum_{k=1}^{\ceil{K_n}-1} \lambda_{k}^{2\alpha} \lambda_{n,k}^{2-s}} \notag\\
    &\leq C  (\Vert x_i^* - x_j^* \Vert + \eps_n) \sqrt{\EnergySnepsn(u_n)} \sqrt{\sum_{k=1}^{\infty} \lambda_k^{2\alpha + 2 - s} } \label{eq:Proofs:Back:DisReg:B4}\\
    &\leq C (\Vert x_i^* - x_j^* \Vert + \eps_n) \sqrt{\EnergySnepsn(u_n)} \sqrt{\sum_{k=1}^{\infty} k^{(2/d)(2\alpha + 2 - s)} } \label{eq:Proofs:Back:DisReg:B5}
\end{align}
where we use $\Vert \psi_{n,k} \Vert_{\Lp{\infty}} \leq C_\psi \lambda^\alpha_k$ for \eqref{eq:Proofs:Back:DisReg:B2},
the fact that $K_n \leq J_n$ and Remark \ref{rem:Proofs:Back:EvalTails:eigenvalueBounds} for \eqref{eq:Proofs:Back:DisReg:B4}
and Proposition \ref{prop:Proofs:Back:Weyl:Law} for \eqref{eq:Proofs:Back:DisReg:B5}. Since by assumption $s > 2\alpha + 2 + d/2$, we obtain: \begin{equation} \label{eq:Proofs:Back:Disreg:B}
    B \leq C (\Vert x_i^* - x_j^* \Vert + \eps_n) \sqrt{\EnergySnepsn(u_n)}. 
\end{equation}

Finally, for $D$, we have
\begin{align}
    D &= \sum_{k=\ceil{K_n}}^{n} \vert \langle u_n, \psi_{n,k}\rangle \vert \vert \psi_{n,k}(x_i^*) - \psi_{n,k}(x_j^*) \vert \notag \\
    &\leq C\gamma \sum_{k=\ceil{K_n}}^{n} \vert \langle u_n, \psi_{n,k}\rangle \vert \sqrt{n} \eps_n \lambda_{n,k}^{1/2} \label{eq:Proofs:Back:DisReg:D2} \\
    &= C \gamma n^{3/2} \eps_n \sqrt{ \l \frac{1}{n} \sum_{k = \ceil{K_n}}^n \lambda_{n,k}^{1/2} \vert \langle u_n, \psi_{n,k}\rangle \vert \r^2 } \notag\\
    &\leq C \gamma n \eps_n \l \sum_{k= \ceil{K_n}}^n \lambda_{n,k}^{s/2} \vert \langle u_n, \psi_{n,k}\rangle \vert \lambda_{n,k}^{1-s/2} \vert \langle u_n, \psi_{n,k}\rangle \vert \r^{1/2} \notag \\
    &\leq C \gamma n \eps_n \l \sqrt{\EnergySnepsn(u_n)} \l \sum_{k=\ceil{K_n}}^{n} \lambda_{n,k}^{2-2s} \lambda_{n,k}^{s} \vert \langle u_n, \psi_{n,k}\rangle \vert^2 \r^{1/2}  \r^{1/2}  \notag\\
    &\leq C \gamma n \eps_n \sqrt{\EnergySnepsn(u_n)}  \lambda_{n,\ceil{K_n}}^{\frac{2-2s}{4}} \label{eq:Proofs:Back:DisReg:D6}\\
    &\leq C \gamma n \eps_n^{s/2 + 1/2} \sqrt{\EnergySnepsn(u_n)}  \label{eq:Proofs:Back:DisReg:D7}
\end{align}
where we use \cite[Lemma 4.1]{Slepcev} for \eqref{eq:Proofs:Back:DisReg:D2}, the fact that $s > 2$ for \eqref{eq:Proofs:Back:DisReg:D6} 
and Corollary \ref{cor:Proofs:Back:EvalTails:Kn} for \eqref{eq:Proofs:Back:DisReg:D7}. 

Combining, \eqref{eq:Proofs:Back:Disreg:A}, \eqref{eq:Proofs:Back:Disreg:B} and \eqref{eq:Proofs:Back:DisReg:D7}, we have:
\[
\vert u_n(x_i^*) - u_n(x_j^*)  \vert \leq C (\Vert x_i^* - x_j^* \Vert + \eps_n) \l \sqrt{\EnergySnepsn(u_n)} + \Vert u_n \Vert_{\Lp{2}} \r + C \gamma n \eps_n^{s/2 + 1/2} \sqrt{\EnergySnepsn(u_n)}
\]
for $\Vert x_i^* - x_j^* \Vert \leq \gamma \eps_n$, which, since $\Vert x_i^* - x_j^* \Vert = \dTorus(x_i,x_j)$, yields the claim of the proposition.
\end{proof}


The next result is essential to show uniform compactness of our sequence of minimizers in Section \ref{subsec:compacteness}. In particular, our strategy in Proposition \ref{prop:Proofs:Compactness:L2Compactness} will be to use the Ascoli-Arzel\`a theorem on a sequence of mollified mimimizers. For the latter, we will need equicontinuity of our sequence which we deduce by the regularity properties of the minimizers proven in Corollary \ref{cor:Proofs:Back:DisReg:GlobReg}.

\begin{corollary}
\label{cor:Proofs:Back:DisReg:GlobReg}
Global regularity of functions.
Assume Assumptions~\ref{ass:Main:Ass:S1}, \ref{ass:Main:Ass:M1}, \ref{ass:Main:Ass:M2}, \ref{ass:Main:Ass:D1}, \ref{ass:Main:Ass:W1}, \ref{ass:Main:Ass:W2} and~\ref{ass:Main:Ass:L1} hold, $\eps_n$ satisfies~\eqref{eq:Main:Ass:epsLBWellPosed}, and $\rho\in\Ck{2}$.
Let $K_n$ be as in Proposition \ref{prop:Proofs:Back:DisReg:EVecAlt} and assume that $\Vert \psi_{n,k} \Vert_{\Lp{\infty}} \leq C_{\psi} \lambda_{k}^{\alpha}$ for all $k \leq \ceil{K_n}$ and some $\alpha > 0$. Let $s > 2\alpha + 2 + d/2$.
Then, there exists $C>0$ such that, $\bbP$-a.s., for $n$ sufficiently large we have
\begin{equation} \label{eq:Proofs:Back:DisReg:GlobReg}
\vert u_n(x_i) - u_n(x_j)  \vert \leq C \l \sqrt{\EnergySnepsn(u_n)} + \Vert u_n \Vert_{\Lp{2}} \r \dTorus(x_i,x_j) + C \sqrt{\EnergySnepsn(u_n)} n \eps_n^{s/2 -1/2} \dTorus(x_i,x_j)
\end{equation} 
for all $x_i,x_j\in\Omega_n$ and any $u_n:\Omega_n\to \bbR$.
\end{corollary}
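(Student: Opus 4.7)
The plan is to derive the global estimate from the local bound of Theorem~\ref{thm:Proofs:Back:DisReg:DisRegAlt} via a chaining argument along a path of sample points. Given $x_i,x_j\in\Omega_n$, the regime of interest is $\dTorus(x_i,x_j)>\eps_n$ (when $\dTorus(x_i,x_j)\le \eps_n$, Theorem~\ref{thm:Proofs:Back:DisReg:DisRegAlt} with $\gamma=1$ already yields~\eqref{eq:Proofs:Back:DisReg:GlobReg} after enlarging $C$ to absorb the extra $\eps_n$ term). In this regime I would construct a sequence $y_0=x_i,y_1,\ldots,y_M=x_j$ in $\Omega_n$ satisfying $\dTorus(y_k,y_{k+1})\le \gamma\eps_n$ for some fixed constant $\gamma$, with $M\le C\dTorus(x_i,x_j)/\eps_n$.

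To build the chain, first fix a minimising geodesic on the torus from $x_i$ to $x_j$ and place waypoints $z_0=x_i,z_1,\ldots,z_M=x_j$ along it with spacing $\dTorus(x_i,x_j)/M\le \eps_n/2$. Then, for each intermediate $z_k$, select a sample $y_k\in\Omega_n$ with $\dTorus(y_k,z_k)\le\dWp{\infty}(\mu_n,\mu)$. By Theorem~\ref{thm:Back:TLp:LinftyMapsRate} together with~\eqref{eq:Main:Ass:epsLBWellPosed}, we have $\dWp{\infty}(\mu_n,\mu)\ll \eps_n$ for $n$ large, $\bbP$-a.s., so the triangle inequality yields $\dTorus(y_k,y_{k+1})\le (1/2+2\dWp{\infty}(\mu_n,\mu)/\eps_n)\eps_n\le \gamma\eps_n$ for some $\gamma$ independent of $n$.

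Applying Theorem~\ref{thm:Proofs:Back:DisReg:DisRegAlt} to each consecutive pair $(y_k,y_{k+1})$ gives
\[
\vert u_n(y_k)-u_n(y_{k+1})\vert \le C\eps_n\l\sqrt{\EnergySnepsn(u_n)}+\Vert u_n\Vert_{\Lp{2}}\r + Cn\eps_n^{s/2+1/2}\sqrt{\EnergySnepsn(u_n)}.
\]
Summing by the triangle inequality over the $M$ steps and using $M\eps_n\le C\dTorus(x_i,x_j)$ turns the first contribution into $C\dTorus(x_i,x_j)\l\sqrt{\EnergySnepsn(u_n)}+\Vert u_n\Vert_{\Lp{2}}\r$ and the second into $Cn\eps_n^{s/2-1/2}\sqrt{\EnergySnepsn(u_n)}\dTorus(x_i,x_j)$, which are exactly the two terms of~\eqref{eq:Proofs:Back:DisReg:GlobReg}.

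The main obstacle is verifying the geometric fact that the chain of sample points can be built with the prescribed spacing; this reduces to the smallness of $\dWp{\infty}(\mu_n,\mu)/\eps_n$, which is guaranteed by~\eqref{eq:Main:Ass:epsLBWellPosed} and Theorem~\ref{thm:Back:TLp:LinftyMapsRate}. Once that is in hand, the rest is careful bookkeeping of constants, mirroring the telescoping strategy used for global Lipschitz bounds from local ones in~\cite[Lemma 4.1]{Slepcev}.
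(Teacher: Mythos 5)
Your chaining argument is, in substance, the same as the paper's proof: decompose the path from $x_i$ to $x_j$ into $O(\dTorus(x_i,x_j)/\eps_n)$ segments of length $O(\eps_n)$, apply the local Morrey-type bound of Theorem~\ref{thm:Proofs:Back:DisReg:DisRegAlt} on each segment, and telescope, with $M\eps_n \lesssim \dTorus(x_i,x_j)$ converting the additive $\eps_n$ and the flat $n\eps_n^{s/2+1/2}$ contributions into the two terms of~\eqref{eq:Proofs:Back:DisReg:GlobReg}. The only methodological difference is how you locate a sample point near each waypoint: you use the transport map $T_n$ and the fact that $\|\Id-T_n\|_{\Lp{\infty}}/\eps_n\to 0$ under~\eqref{eq:Main:Ass:epsLBWellPosed}, whereas the paper invokes the local density estimate $\mu_n(B(z,\gamma\eps_n/2))>0$ from \cite[Lemma 4.1]{Slepcev}. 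Both rest on the same almost-sure control of $\dWp{\infty}(\mu_n,\mu)$ and are interchangeable; yours even gives a slightly tighter localisation of the waypoints, though this buys nothing once the constant $\gamma$ is adjusted.

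One step of your argument does not go through as written, and you should be aware of it. You dismiss the regime $\dTorus(x_i,x_j)\le\eps_n$ by saying that Theorem~\ref{thm:Proofs:Back:DisReg:DisRegAlt} with $\gamma=1$ gives~\eqref{eq:Proofs:Back:DisReg:GlobReg} ``after enlarging $C$.'' That cannot be right as stated: the theorem's right-hand side contains $C(\dTorus(x_i,x_j)+\eps_n)(\sqrt{\EnergySnepsn(u_n)}+\|u_n\|_{\Lp{2}})$ and the additive term $Cn\eps_n^{s/2+1/2}\sqrt{\EnergySnepsn(u_n)}$, neither of which can be absorbed into the multiplicative factor $\dTorus(x_i,x_j)$ by a uniform constant when $\dTorus(x_i,x_j)\ll\eps_n$. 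In fact the paper's proof tacitly uses the same bound $C_n\eps_n\le C\|x_i^*-x_j^*\|$ with $C_n=1$ in this regime, which has the same defect, so the difficulty is inherited from the source; the honest conclusion of either argument is $\vert u_n(x_i)-u_n(x_j)\vert\le C(\sqrt{\EnergySnepsn(u_n)}+\|u_n\|_{\Lp{2}})(\dTorus(x_i,x_j)+\eps_n)+C\sqrt{\EnergySnepsn(u_n)}\,n\eps_n^{s/2-1/2}(\dTorus(x_i,x_j)+\eps_n)$, which suffices for all later uses of the corollary but is not literally the displayed estimate when $\dTorus(x_i,x_j)<\eps_n$.
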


\begin{proof}
In the proof $C>0$ will denote a constant that can be arbitrarily large, independent of $n$ that may change from line to line. Depending on context, we will write $z$ for the point $z \in \bbR^d$ and for its equivalence class $z + \mathbb{Z}^d \in \bbR^d/\mathbb{Z}^d$.
With probability one, we can assume that the conclusions of Theorem~\ref{thm:Back:TLp:LinftyMapsRate} and Theorem~\ref{thm:Proofs:Back:DisReg:DisRegAlt} hold. 

For $z \in \mathbb{R}^d$, let $B(z,\delta)$ and $B_{\dTorus}(z,\delta)$ be the balls of radius $\delta > 0$ centered at $z$ respectively in the Euclidean and manifold metric. Then, for $\alpha > 0$ and $n$ large enough, 
we recall the following density result from \cite[Lemma 4.1]{Slepcev}:
\begin{equation} \label{eq:Proofs:Back:DisReg:GlobReg:DensityAlt}
\mu_n \l B \l z,\frac{\alpha\eps_n}{2} \r\r > 0.
\end{equation}
By assumption \ref{ass:Main:Ass:S1}, for $\delta \ll 1/2$, for all $w \in B_{\dTorus}(z,\delta)$ there exists $w^* \in w + \mathbb{Z}^d$ with $w^* \in B(z,\delta)$ such that $\dTorus(z,w) = \Vert z - w^* \Vert$. Conversely, for all $w \in B(z,\delta)$, $w + \mathbb{Z}^d \in B_{\dTorus}(z,\delta)$ and $\dTorus(z,w) = \Vert z - w \Vert$ implying that \eqref{eq:Proofs:Back:DisReg:GlobReg:DensityAlt} holds for $B_{\dTorus}$ as well.

Let $x_i,x_j \in \Omega_n$, choose $\gamma > 0$ and let $n$ be large enough so that $\gamma \eps_n \ll 1/2$, \eqref{eq:Proofs:Back:DisReg:GlobReg:DensityAlt} and  \eqref{eq:Proofs:Back:DisReg:DisReg} hold. Let $x_i^* \in x_i + \mathbb{Z}^d \subseteq \mathbb{R}^d$ and $x_j^* \in x_j + \mathbb{Z}^d \subseteq \mathbb{R}^d$ be such that $\Vert x_i^* - x_j^* \Vert = \dTorus(x,y)$ and $r$ the straight path between $x_i^*$ and $x_j^*$. Define $C_n = \lceil \frac{2\Vert x_i^* - x_j^* \Vert}{\gamma \eps_n} \rceil$. Then, there exist points $\{x^{(i)}\}_{i=1}^{C_n+1}$ on $r$ with $x^{(1)} =x_i$, $x^{(C_n + 1)} = x_j$ and $\dTorus(x^{(i+1)},x^{(i)}) = \Vert x^{(i+1)} - x^{(i)} \Vert \leq \frac{\gamma \eps_n}{2}$ for $i = 1, \cdots, C_n$. Using the density result \eqref{eq:Proofs:Back:DisReg:GlobReg:DensityAlt}, we find points $\{x^{(i)}_{\Omega_n}\}_{i=2}^{C_n}$ such that $x^{(i)}_{\Omega_n} \in \Omega_n$ and $x^{(i)}_{\Omega_n} \in B\l x^{(i)}, \frac{\gamma \eps_n}{4} \r = B_{\dTorus}\l x^{(i)}, \frac{\gamma \eps_n}{4}\r$. Define $x^{(1)}_{\Omega_n} = x_i$ and $x_{\Omega_n}^{(C_n + 1)} = x_j$ and note that $\dTorus \l x_{\Omega_n}^{(i+1)},x_{\Omega_n}^{(i)}\r = \Vert x_{\Omega_n}^{(i+1)} - x_{\Omega_n}^{(i)} \Vert \leq \gamma \eps_n$. We can estimate as follows: 
\begin{align}
    \vert u_n(x_i^*) - u_n(x_j^*) \vert &\leq \sum_{i = 1}^{C_n} \vert u_n\l x^{(i)}_{\Omega_n} \r - u_n \l x_{\Omega_n}^{(i+1)} \r \vert \notag\\
    &\leq C \sum_{i=1}^{C_n} (\dTorus \l x_{\Omega_n}^{(i)}, x_{\Omega_n}^{(i+1)} \r + \eps_n ) \l \sqrt{\EnergySnepsn(u_n)} + \Vert u_n \Vert_{\Lp{2}} \r \label{eq:Proofs:Back:DisReg:GlobReg:Ineq2Alt} \\
& \qquad + C \sum_{i=1}^{C_n} \gamma n \eps_n^{s/2 + 1/2} \l  \EnergySnepsn(u_n)\r^{1/2} \notag\\
    &\leq C \l \sqrt{\EnergySnepsn(u_n)} + \Vert u_n \Vert_{\Lp{2}} \r C_n \eps_n + C \l  \EnergySnepsn(u_n) \r^{1/2} C_n n \eps_n^{s/2 + 1/2} \notag\\
    &\leq C \l \sqrt{\EnergySnepsn(u_n)} + \Vert u_n \Vert_{\Lp{2}} \r \Vert x_i^* - x_j^* \Vert \notag\\
    & \qquad + C  \sqrt{\EnergySnepsn(u_n)} n \eps_n^{s/2 - 1/2} \Vert x_i^*-x_j^* \Vert \notag
\end{align}
where we used \eqref{eq:Proofs:Back:DisReg:DisReg} for \eqref{eq:Proofs:Back:DisReg:GlobReg:Ineq2Alt}. Recalling $\Vert x_i^* - x_j^* \Vert = \dTorus(x_i,x_j)$ completes the proof of the proposition.
\end{proof}

\begin{remark}
Tail of the eigenvalues.
From the proof of Theorem \ref{thm:Proofs:Back:DisReg:DisRegAlt}, it is apparent that it is the lack of control on the tail of the eigenvalues, i.e. on $\{\lambda_{n,k}\}_{k \geq \lfloor K_n \rfloor + 1}$, that induces the term 
\[C \sqrt{\EnergySnepsn(u_n)} n \eps_n^{s/2 -1/2} \dTorus(x_i,x_j)
\]
in \eqref{eq:Proofs:Back:DisReg:GlobReg}. In turn, it is the latter that will imply the upper bound on $\eps_n$ as we will require $n \eps_n^{s/2 -1/2}$ to be bounded for Proposition \ref{prop:Proofs:Compactness:L2Compactness}.

In order to circumvent this, one could imagine only considering functions in the set $\mathrm{Tr}(n) = \{u:\Omega_n \to \bbR \spaceBar \langle u, \psi_{n,k} \rangle_{\Lp{2}(\Omega_n)} = 0 \text{ for $k \geq \lfloor K_n \rfloor + 1$}  \}$ and substitute the truncated energy $\EnergySnepsntrunc(u_n) = \sum_{k=1}^{\lfloor K_n \rfloor} \lambda_{n,k}^s \langle u, \psi_{n,k} \rangle_{\Lp{2}(\Omega_n)}^2$ for $\EnergySnepsn(\cdot)$. The authors believe the analysis to be analogous to the one presented here besides a few changes in Proposition \ref{prop:Proofs:GConvergence:WellPosed:limsup}. The particularly nice feature of this truncated problem is that $\eps_n$ would no longer have an upper bound: we expect to be in the well-posed regime whenever $s > 2\alpha + 2 + d/2$ (with the sharp bound still being $s>d/2$ as explained in Remark \ref{rem:Main:Res:Relationship}) and in the ill-posed regime when $s < d/2$.
\end{remark}

\subsection{Compactness} \label{subsec:compacteness}

In order to show compactness of our discrete functions, we use the following regularity result for a mollification of our discrete function extended to the continuum domain.


\begin{lemma} \label{lem:Proofs:Compactness:RegAlt}
Regularity of mollified sequences.
Assume Assumptions~\ref{ass:Main:Ass:S1}, \ref{ass:Main:Ass:M1}, \ref{ass:Main:Ass:M2}, \ref{ass:Main:Ass:D1}, \ref{ass:Main:Ass:W1}, \ref{ass:Main:Ass:W2} and~\ref{ass:Main:Ass:L1} hold, $\eps_n$ satisfies~\eqref{eq:Main:Ass:epsLBWellPosed}, and $\rho\in\Ck{2}$.
Let $K_n$ be as in Proposition \ref{prop:Proofs:Back:DisReg:EVecAlt} and assume that $\Vert \psi_{n,k} \Vert_{\Lp{\infty}} \leq C_{\psi} \lambda_{k}^{\alpha}$ for all $k \leq \ceil{K_n}$ and some $\alpha > 0$. Let $s > 2\alpha + 2 + d/2$.
Let $J$ be a radially symmetric, positive mollifier supported in the unit ball with $\Vert J \Vert_{\Lp{1}} = 1$ and write $J_{\eps_n}(\cdot) = \eps_n^{-d}J(\cdot / \eps_n)$. For any $u_n: \Omega_n \mapsto \bbR$, let $\tilde{u}_n = J_\epsilon \ast (u_n \circ T_n)$ where $\{T_n\}_{n=1}^\infty$ are the transport maps from Theorem~\ref{thm:Back:TLp:LinftyMapsRate}. Then, $\bbP$-a.s., for $n$ sufficiently large we have
\begin{align}
\vert \tilde{u}_n(x) - \tilde{u}_n(y)  \vert &\leq C \l \sqrt{\EnergySnepsn(u_n)} + \Vert u_n \Vert_{\Lp{2}} \r \l 2 \Vert \Id-T_n \Vert_{\Lp{\infty}} + \dTorus(x,y)  \r \label{eq:Proofs:Compactness:Reg}\\
&+ \sqrt{\EnergySnepsn(u_n)} n \eps_n^{s/2 -1/2} \l 2 \Vert \Id-T_n \Vert_{\Lp{\infty}} + \dTorus(x,y) \r \notag
\end{align}
for all $x, y \in \Omega$.
\end{lemma}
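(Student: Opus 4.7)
The plan is to reduce the estimate for the mollified extension $\tilde{u}_n$ on the continuum domain $\Omega$ to the discrete regularity estimate of Corollary \ref{cor:Proofs:Back:DisReg:GlobReg} applied at shifted points that lie in $\Omega_n$. Writing the convolution with a change of variables,
\[
\tilde{u}_n(x) - \tilde{u}_n(y) = \int_{\Omega} J_{\eps_n}(z)\bigl( u_n(T_n(x-z)) - u_n(T_n(y-z)) \bigr)\,\dd z,
\]
and using that $J_{\eps_n}\geq 0$ with $\Vert J_{\eps_n}\Vert_{\Lp{1}}=1$, it suffices to bound the integrand pointwise in $z$ and integrate.

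For each fixed $z$ in the support of $J_{\eps_n}$, the points $T_n(x-z)$ and $T_n(y-z)$ lie in $\Omega_n$, so Corollary \ref{cor:Proofs:Back:DisReg:GlobReg} applies and yields
\[
\vert u_n(T_n(x-z)) - u_n(T_n(y-z)) \vert \leq C \bigl( \sqrt{\EnergySnepsn(u_n)} + \Vert u_n \Vert_{\Lp{2}} \bigr) \dTorus(T_n(x-z),T_n(y-z)) + C \sqrt{\EnergySnepsn(u_n)} n \eps_n^{s/2-1/2}\, \dTorus(T_n(x-z),T_n(y-z)).
\]
The remaining work is to estimate the manifold distance appearing on the right. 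Using the translation invariance of $\dTorus$ on the torus and the triangle inequality,
\[
\dTorus(T_n(x-z),T_n(y-z)) \leq \dTorus(T_n(x-z),x-z) + \dTorus(x-z,y-z) + \dTorus(y-z,T_n(y-z)) \leq 2\Vert \Id - T_n \Vert_{\Lp{\infty}} + \dTorus(x,y),
\]
where I use that $\dTorus$ is bounded above by the Euclidean norm on small scales, which applies here because $\Vert \Id-T_n\Vert_{\Lp{\infty}}\to 0$ by Theorem \ref{thm:Back:TLp:LinftyMapsRate}.

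Plugging this bound back into the pointwise estimate and integrating against $J_{\eps_n}(z)\,\dd z$ gives exactly \eqref{eq:Proofs:Compactness:Reg}, since the constant on the right does not depend on $z$. The only technical care needed is to justify the triangle inequality step above: one must make sure that for $n$ large enough (so $\Vert \Id-T_n\Vert_{\Lp\infty}$ is sufficiently small and so $z\in\supp J_{\eps_n}$ is of size $\eps_n$), the displacements $T_n(x-z)-(x-z)$ can be realised by representatives whose Euclidean norms equal $\dTorus(T_n(x-z),x-z)$, so that the torus triangle inequality matches the Euclidean one. This is the only mildly delicate point; everything else is a direct consequence of Corollary \ref{cor:Proofs:Back:DisReg:GlobReg} and the defining properties of the mollifier.
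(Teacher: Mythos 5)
Your proposal is correct and follows essentially the same route as the paper: write the difference of mollified functions as an integral of the pointwise difference, apply Corollary \ref{cor:Proofs:Back:DisReg:GlobReg} to the points $T_n(x-z)$ and $T_n(y-z)$ in $\Omega_n$, bound $\dTorus(T_n(x-z),T_n(y-z))$ by $2\Vert \Id - T_n \Vert_{\Lp{\infty}} + \dTorus(x,y)$ via the triangle inequality and translation invariance of $\dTorus$, and then integrate against $J_{\eps_n}$ using $\Vert J_{\eps_n}\Vert_{\Lp{1}}=1$. The paper also records the triangle inequality bound as a standalone inequality \eqref{eq:Proofs:Compactness:Reg:TranportDiff} before integrating, but the content is identical.
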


\begin{proof}
In the proof $C>0$ will denote a constant that can be arbitrarily large, independent of $n$ that may change from line to line.
With probability one, we can assume that the conclusion of
Corollary~\ref{cor:Proofs:Back:DisReg:GlobReg} holds.

For any $x,y,z \in \Omega$ we have, 
\begin{equation} \label{eq:Proofs:Compactness:Reg:TranportDiff}
     \dTorus(T_n(x - z),T_n(y - z)) \leq 2 \Vert \Id-T_n \Vert_{\Lp{\infty}} + \dTorus(x,y).
\end{equation}
Let $n$ be large enough so that \eqref{eq:Proofs:Back:DisReg:GlobReg} holds. Then, we can estimate as follows:
\begin{align}
    \vert \tilde{u}_n(x) - \tilde{u}_n(y) \vert \notag &= \left| \int J_{\eps_n}(z) [u_n \circ T_n (x - z ) - u_n \circ T_n(y - z)] \, \dd z \right| \notag\\
    &\leq C \int J_{\eps_n}(z) \l \sqrt{\EnergySnepsn(u_n)} + \Vert u_n \Vert_{\Lp{2}} \r \l 2 \Vert \Id-T_n \Vert_{\Lp{\infty}} + \dTorus(x, y) \r \, \dd z  \label{eq:Proofs:Compactness:Reg:Ineq1} \\
    &\qquad + C \int J_{\eps_n}(z) \sqrt{\EnergySnepsn(u_n)} n \eps_n^{s/2 - 1/2} \l 2 \Vert \Id-T_n \Vert_{\Lp{\infty}} + \dTorus(x,y) \r \, \dd z \notag\\
    &\leq C \l \sqrt{\EnergySnepsn(u_n)} + \Vert u_n \Vert_{\Lp{2}} \r \l 2 \Vert \Id-T_n \Vert_{\Lp{\infty}} + \dTorus(x,y)  \r \notag\\
    &\qquad+ \sqrt{\EnergySnepsn(u_n)} n \eps_n^{s/2 - 1/2} \l 2 \Vert \Id-T_n \Vert_{\Lp{\infty}} + \dTorus( x,y) \r \notag
\end{align}
where we used \eqref{eq:Proofs:Back:DisReg:GlobReg} and \eqref{eq:Proofs:Compactness:Reg:TranportDiff} for \eqref{eq:Proofs:Compactness:Reg:Ineq1}.
\end{proof}


We will use a variant of the Ascoli-Arzel\`a theorem in order to prove compactness of the mollified discrete functions in the $\Lp{\infty}$-norm. 
We state the result in the setting of Assumption~\ref{ass:Main:Ass:S1} (i.e. on the torus) but the result is true in more general compact manifolds and sets.

\begin{theorem} \label{thm:Proofs:Compactness:ArzelaAscoli}
Asymptotic Ascoli-Arzel\`a.
Let $\Omega$ be the unit torus and $\{u_n:\Omega \mapsto \mathbb{R}\}_{n=1}^\infty$ be a set of continuous functions such that
\begin{enumerate}
    \item $\sup_{n \in \mathbb{N}} \Vert u_n \Vert_{\Lp{\infty}} < \infty$;
    \item for all $\bar{\epsilon} > 0$, there exists $\delta > 0$ and $N$ such that, if $\dTorus(x, y) < \delta $ and $n \geq N$:
    \[
        \vert u_n(x) - u_n(y) \vert < \bar{\epsilon}.
    \]
\end{enumerate}
Then, there exists a continuous function $u:\Omega \mapsto \bbR$ and a subsequence $\{u_{n_k}\}_{k=1}^\infty$ such that $\Vert u_{n_k} - u \Vert_{\Lp{\infty}} \to 0 $ as $k \to \infty$. 
\end{theorem}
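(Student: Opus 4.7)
The plan is to reduce the statement to the classical Arzelà-Ascoli theorem on the compact metric space $(\Omega,\dTorus)$. The only nonstandard feature here is that equicontinuity is stated asymptotically, holding only for $n \geq N = N(\bar\epsilon)$. The observation that makes the proof straightforward is that the finitely many remaining functions $u_1, \ldots, u_{N-1}$ are each continuous on the compact torus $\Omega$, and therefore individually uniformly continuous. Consequently, asymptotic equicontinuity can be upgraded to genuine (uniform) equicontinuity of the whole sequence by taking a minimum of finitely many moduli.

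Concretely, I would fix $\bar\epsilon > 0$ and apply hypothesis (2) to obtain $\delta > 0$ and $N \in \bbN$ such that for all $n \geq N$, $\dTorus(x,y) < \delta$ implies $|u_n(x) - u_n(y)| < \bar\epsilon$. For each $n \in \{1,\hdots,N-1\}$, continuity of $u_n$ on the compact torus yields a uniform modulus, so there exists $\delta_n > 0$ such that $\dTorus(x,y) < \delta_n$ implies $|u_n(x)-u_n(y)| < \bar\epsilon$. Setting $\delta^\star = \min\{\delta,\delta_1,\hdots,\delta_{N-1}\} > 0$, one has $|u_n(x)-u_n(y)| < \bar\epsilon$ for every $n \in \bbN$ whenever $\dTorus(x,y) < \delta^\star$. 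This establishes that $\{u_n\}_{n=1}^\infty$ is uniformly equicontinuous on the compact metric space $(\Omega,\dTorus)$.

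Combining this uniform equicontinuity with the uniform $\Lp{\infty}$-bound in hypothesis (1), the classical Arzelà-Ascoli theorem applied in $\Ck{0}(\Omega)$ (with the uniform metric) furnishes a subsequence $\{u_{n_k}\}_{k=1}^\infty$ converging uniformly to some $u \in \Ck{0}(\Omega)$, i.e. $\Vert u_{n_k} - u \Vert_{\Lp{\infty}} \to 0$, which is the conclusion.

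There is no significant obstacle in this argument; the only point requiring minor care is the quantifier order in the asymptotic equicontinuity hypothesis, namely that the threshold $N$ is allowed to depend on $\bar\epsilon$, but this does not cause any difficulty because the reduction above handles each $\bar\epsilon$ separately. In particular, one does not need to invoke the Kolmogorov--Riesz characterization or any refined compactness criterion beyond the textbook Arzelà-Ascoli result.
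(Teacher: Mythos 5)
Your proof is correct and follows the same approach as the paper: upgrade the asymptotic equicontinuity hypothesis to genuine equicontinuity of the entire sequence by using uniform continuity of the finitely many initial functions $u_1,\hdots,u_{N-1}$ on the compact torus, then apply the classical Arzel\`a--Ascoli theorem.
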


\begin{proof}
Let $\bar{\epsilon} > 0$. Then, by assumption, there exists $\delta > 0$ and $N$ such that $\dTorus(x,y) < \delta $ and $n \geq N$ implies that
\[ \vert u_n(x) - u_n(y) \vert < \bar{\epsilon}. \]
For all $i < N$, since the functions $u_i$ are uniformly continuous, there exists $\delta_i > 0$ such that $\dTorus(x,y) < \delta_i$ implies that $\vert u_i(x) - u_i(y) \vert < \bar{\epsilon}$. Hence, if we define $\bar{\delta} = \min\{\delta_1,\hdots,\delta_{N-1},\delta\}$, then for all $n$, if $\dTorus(x,y) < \bar{\delta}$ we have $\vert u_n(x) - u_n(y) \vert < \bar{\epsilon}$.

This means that the set $\{u_n\}_{n=1}^\infty$ is uniformly bounded and equicontinuous and, by the Ascoli-Arzel\`a theorem \cite[Theorem 8.2.10]{engelking1989general}, we deduce the existence of a continuous function $u:\Omega \mapsto \bbR$ and a subsequence $\{u_{n_k}\}_{k=1}^\infty$ such that $\Vert u_{n_k} - u \Vert_{\Lp{\infty}} \to 0 $.
\end{proof}

We are now able to prove compactness of our discrete functions. 


\begin{proposition} \label{prop:Proofs:Compactness:L2Compactness}
$\TLp{2}$ and uniform compactness.
Assume Assumptions~\ref{ass:Main:Ass:S1}, \ref{ass:Main:Ass:M1}, \ref{ass:Main:Ass:M2}, \ref{ass:Main:Ass:D1}, \ref{ass:Main:Ass:W1}, \ref{ass:Main:Ass:W2} and~\ref{ass:Main:Ass:L1} hold, $\eps_n$ satisfies~\eqref{eq:Main:Ass:epsLBWellPosed}, and $\rho\in\Ck{2}$.
Let $K_n$ be as in Proposition \ref{prop:Proofs:Back:DisReg:EVecAlt} and assume that $\Vert \psi_{n,k} \Vert_{\Lp{\infty}} \leq C_{\psi} \lambda_{k}^{\alpha}$ for all $k \leq \ceil{K_n}$ and some $\alpha > 0$. Let $s > 2\alpha + 2 + d/2$ and assume that $n\eps_n^{s/2 - 1/2}$ is bounded.
Let $\{u_n:\Omega_n \mapsto \mathbb{R}\}_{n=1}^\infty$ be a set of functions with $\sup_n \Vert u_n \Vert_{\Lp{\infty}(\mu_n)} < M$ and $\sup_n \EnergySnepsn(u_n) < M$ for some constant $M$. Then, $\mathbb{P}$-a.s., there exists a continuous function $u : \Omega \mapsto \bbR$ and a subsequence $\{u_{n_k}\}_{k=1}^\infty$ such that 
\begin{equation} \label{eq:Proofs:Compactness:L2Compactness}
\max_{i = 1, \dots, n_k} \vert u_{n_k}(x_i) - u(x_i) \vert \to 0
\end{equation}
and $(\mu_{n_k},u_{n_k})$ converges to $(\mu,u)$ in $\TLp{2}$.
\end{proposition}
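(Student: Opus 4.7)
The strategy is to mollify each $u_n$ to a continuum function, extract a uniformly convergent subsequence via the Ascoli--Arzel\`a-type Theorem \ref{thm:Proofs:Compactness:ArzelaAscoli}, and then transfer the convergence back to the sample points and to $\TLp{2}$. With probability one, I will assume that the conclusions of Theorem \ref{thm:Back:TLp:LinftyMapsRate}, Lemma \ref{lem:Proofs:Compactness:RegAlt}, and Corollary \ref{cor:Proofs:Back:DisReg:GlobReg} hold for the realised sample sequence.

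Let $\{T_n\}$ be the transport maps from Theorem \ref{thm:Back:TLp:LinftyMapsRate} and set $\tilde{u}_n := J_{\eps_n} \ast (u_n \circ T_n)$ as in Lemma \ref{lem:Proofs:Compactness:RegAlt}. Uniform boundedness of $\{\tilde{u}_n\}$ in $\Lp{\infty}$ is immediate from $\Vert J_{\eps_n} \Vert_{\Lp{1}} = 1$ and $\Vert u_n \circ T_n \Vert_{\Lp{\infty}} = \Vert u_n \Vert_{\Lp{\infty}(\mu_n)} \leq M$. For asymptotic equicontinuity I invoke Lemma \ref{lem:Proofs:Compactness:RegAlt}: noting that $\Vert u_n \Vert_{\Lp{2}(\mu_n)} \leq \Vert u_n \Vert_{\Lp{\infty}(\mu_n)} \leq M$ since $\mu_n$ is a probability measure, that $\sqrt{\EnergySnepsn(u_n)} \leq \sqrt{M}$, that $n \eps_n^{s/2 - 1/2}$ is bounded by hypothesis, and that $\Vert \Id - T_n \Vert_{\Lp{\infty}} \to 0$, the right-hand side of \eqref{eq:Proofs:Compactness:Reg} collapses to $o(1) + C\,\dTorus(x,y)$ uniformly in $x,y$. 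Theorem \ref{thm:Proofs:Compactness:ArzelaAscoli} then furnishes a continuous $u:\Omega \to \bbR$ and a subsequence (still denoted $\tilde{u}_{n_k}$) with $\Vert \tilde{u}_{n_k} - u \Vert_{\Lp{\infty}} \to 0$.

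To obtain \eqref{eq:Proofs:Compactness:L2Compactness} I decompose
\[
\vert u_{n_k}(x_i) - u(x_i) \vert \leq \vert u_{n_k}(x_i) - \tilde{u}_{n_k}(x_i) \vert + \vert \tilde{u}_{n_k}(x_i) - u(x_i) \vert,
\]
in which the second term vanishes uniformly in $i$ by the previous step. For the first, I write
\[
\tilde{u}_{n_k}(x_i) - u_{n_k}(x_i) = \int J_{\eps_{n_k}}(z)\, [u_{n_k}(T_{n_k}(x_i - z)) - u_{n_k}(x_i)]\, \dd z,
\]
and exploit $\dTorus(T_{n_k}(x_i - z), x_i) \leq \Vert \Id - T_{n_k} \Vert_{\Lp{\infty}} + \eps_{n_k}$ whenever $\vert z \vert \leq \eps_{n_k}$; Corollary \ref{cor:Proofs:Back:DisReg:GlobReg} then yields a bound of order $(1 + n \eps_{n_k}^{s/2 - 1/2})(\Vert \Id - T_{n_k} \Vert_{\Lp{\infty}} + \eps_{n_k})$, which tends to zero uniformly in $i$. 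The $\TLp{2}$ conclusion follows from the same splitting applied to $\vert u(x) - u_{n_k}(T_{n_k}(x))\vert$: using $\dTorus(T_{n_k}(x - z), T_{n_k}(x)) \leq 2 \Vert \Id - T_{n_k} \Vert_{\Lp{\infty}} + \eps_{n_k}$, one gets $\sup_{x \in \Omega} \vert u(x) - u_{n_k}(T_{n_k}(x)) \vert \to 0$, and since $\mu_{n_k} \weakstarto \mu$, Proposition \ref{prop:Back:TLp} delivers $(\mu_{n_k}, u_{n_k}) \to (\mu, u)$ in $\TLp{2}$.

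The central obstacle is controlling the nonlocal residue $\sqrt{\EnergySnepsn(u_n)}\, n \eps_n^{s/2 - 1/2}\, \dTorus(x,y)$ in \eqref{eq:Proofs:Compactness:Reg}, without which equicontinuity would fail. The hypothesis $n \eps_n^{s/2 - 1/2} = O(1)$ is tailored precisely to this purpose and constitutes the upper bound on $\eps_n$ that characterizes the well-posed regime.
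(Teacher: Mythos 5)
Your proof is correct and follows essentially the same route as the paper: mollify via $J_{\eps_n} \ast (u_n \circ T_n)$, establish uniform boundedness and asymptotic equicontinuity from Lemma~\ref{lem:Proofs:Compactness:RegAlt} together with the hypothesis that $n\eps_n^{s/2-1/2}$ is bounded, invoke Theorem~\ref{thm:Proofs:Compactness:ArzelaAscoli}, and then transfer the uniform limit back to the sample points using Corollary~\ref{cor:Proofs:Back:DisReg:GlobReg}. The only (welcome) difference is that you spell out the final step from~\eqref{eq:Proofs:Compactness:L2Compactness} to $\TLp{2}$ convergence via Proposition~\ref{prop:Back:TLp}, which the paper leaves implicit.
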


\begin{proof}
In the proof $C>0$ will denote a constant that can be arbitrarily large, independent of $n$ that may change from line to line.
With probability one, we can assume that the conclusion of
Lemma~\ref{lem:Proofs:Compactness:RegAlt} holds.
Let the functions $\tilde{u}_n = J_{\eps_{n}} \ast (u_n \circ T_n)$ be defined as in Lemma \ref{lem:Proofs:Compactness:RegAlt}. By assumption on $\{u_n\}_{n=1}^\infty$ we have
\[
       \Vert \tilde{u}_n \Vert_{\Lp{\infty}} = \frac{1}{\eps_n^d} \Vert J(\cdot/\eps_n) \ast (u_n \circ T_n) \Vert_{\Lp{\infty}} \leq \frac{1}{\eps_n^d} \Vert J(\cdot/\eps_n) \Vert_{\Lp{1}} \Vert u_n \circ T_n \Vert_{\Lp{\infty}(\mu)} <C.
\]
Moreover, using \eqref{eq:Proofs:Compactness:Reg} and the fact that $n\eps_n^{s/2 - 1/2}$ is a bounded sequence, for any  $x, y \in \Omega$ we obtain, for $n$ sufficiently large:
\begin{align}
\vert \tilde{u}_n(x) - \tilde{u}_n(y)  \vert &\leq C \l\Vert \Id-T_n \Vert_{\Lp{\infty}} + \dTorus(x,y)\r (1 + n \eps_n^{s/2-1/2}) \notag \\
&\leq C \l\Vert \Id-T_n \Vert_{\Lp{\infty}} + \dTorus( x,y) \r.  \label{eq:Proofs:Compactness:Linfty:mollifierReg}
\end{align}
Let $\bar{\eps} > 0$. By the estimates in Theorem \ref{thm:Back:TLp:LinftyMapsRate}, for $n$ sufficiently large, we have $C(\Vert \Id-T_n \Vert_{\Lp{\infty}}) < \bar{\eps}/2$. Let $x,y$ be such that $\dTorus(x,y) < \bar{\eps}/2C =: \delta$. Inserting the latter two estimates in \eqref{eq:Proofs:Compactness:Linfty:mollifierReg}, we have
\[
    \vert \tilde{u}_n(x_i) - \tilde{u}_n(x_j)  \vert < \bar{\eps}.
\]
We now apply Theorem \ref{thm:Proofs:Compactness:ArzelaAscoli} to deduce the existence of a subsequence $\{\tilde{u}_{n_k}\}_{k=1}^\infty$ and a continuous function $u:\Omega \mapsto \bbR$ such that \begin{equation}
   \Vert \tilde{u}_{n_k} - u \Vert_{\Lp{\infty}} \to 0.
    \label{eq:Proofs:Compactness:Linfty:mollifierConvergence}
\end{equation}
For any $i \leq n_k$, we can write 
\[
\vert u_{n_k}(x_i) - u(x_i) \vert \leq \vert u_{n_k}(x_i) - \tilde{u}_{n_k}(x_i) \vert + \vert \tilde{u}_{n_k}(x_i) - u(x_i) \vert =: A + B.
\]
By \eqref{eq:Proofs:Compactness:Linfty:mollifierConvergence}, we know that $B$ tends to 0. Let $k$ be large enough so that~\eqref{eq:Proofs:Back:DisReg:GlobReg} holds for $u_{n_k}$. For the term $A$, we estimate as follows: 
\begin{align}
    A &= \left| u_{n_k}(x_i) - \int J_{\eps_n}(x_i - y) u_{n_k}(T_{n_k}(y)) \,\dd y \right| \notag \\
    &\leq \int J_{\eps_n}(x_i - y) \left|  u_{n_k}(T_{n_k}(y)) - u_{n_k}(x_i)   \right| \, \dd y \notag \\
    &\leq C \int J_{\eps_n}(x_i - y) \dTorus(T_{n_k}(y),x_i) \, \dd y \label{eq:Proofs:Compactness:Linfty:Ineq1} \\
    & \leq C\int J_{\eps_n}(x_i-y) \l \dTorus(T_{n_k}(y),y) + \dTorus(y,x_i)\r \notag \\
    &\leq C \l \Vert T_n - \Id \Vert_{\Lp{\infty}} +\eps_n\r \notag
\end{align}
where we used \eqref{eq:Proofs:Back:DisReg:GlobReg} and the fact that $n\eps_n^{s/2-1/2}$ is bounded for \eqref{eq:Proofs:Compactness:Linfty:Ineq1}. By Theorem \ref{thm:Back:TLp:LinftyMapsRate}, we deduce that $A$ tends to 0 and obtain the claim of the proposition.
\end{proof}

\subsection{\texorpdfstring{$\Gamma$}{Gamma} Convergence}

We now proceed to prove the $\Gamma$-convergence for our energy functionals.
Our proofs use the following $\Gamma$-convergence result.

\begin{theorem}\label{thm:Proofs:GConvergence:withoutConstraints}
$\Gamma$-convergence without constraints.
Assume that Assumptions \ref{ass:Main:Ass:S1}, \ref{ass:Main:Ass:M1}, \ref{ass:Main:Ass:M2}, \ref{ass:Main:Ass:D1}, \ref{ass:Main:Ass:W1}, \ref{ass:Main:Ass:W2} and \ref{ass:Main:Ass:L1} hold. Furthermore, assume that $\eps_n$ satisfies \eqref{eq:Main:Ass:epsLBIllPosed}. Then, $\bbP$-a.s., we have
\begin{enumerate}
    \item $\EnergySnepsn{}$ $\Gamma$-converges to $\EnergyS$;
    \item Any sequence of functions $\{u_n:\Omega_n \mapsto \bbR\}$ with $\sup_{n\in \bbN} \Vert u_n \Vert_{\Lp{2}} \leq C $ and $\sup_{n \in \bbN} \EnergySnepsn(u_n) \leq C$ is pre-compact in the $\TLp{2}$ topology.
\end{enumerate}
\end{theorem}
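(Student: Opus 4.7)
The plan is to prove both parts simultaneously, since they both hinge on the spectral decomposition combined with convergence of the eigenpairs of $\Delta_{n,\eps_n}$. To lighten notation, write $\psi_{n,k} := \psi_{n,\eps_n,k}$ and $\lambda_{n,k} := \lambda_{n,\eps_n,k}$. Under \eqref{eq:Main:Ass:epsLBIllPosed}, Lemma~\ref{lem:Proofs:Back:EvalTails:Jn} gives eigenvalue convergence $\lambda_{n,k} \to \lambda_k$ for $k \leq J_n$ with $J_n \to \infty$, and in the same regime the results of \cite{GARCIATRILLOS2018239} and \cite{Trillos} yield $\TLp{2}$-convergence of each eigenvector $\psi_{n,k} \to \psi_k$ for fixed $k$.

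For the compactness statement, expand $u_n = \sum_{k=1}^n \alpha_{n,k}\psi_{n,k}$ in the discrete eigenbasis so that the $\Lp{2}$-bound gives $\sum_k \alpha_{n,k}^2 \leq C$ and the energy bound gives $\sum_k \lambda_{n,k}^s \alpha_{n,k}^2 \leq C$. For any fixed $K$ one has $|\alpha_{n,k}| \leq C$ for each $k \leq K$, so a diagonal extraction produces a subsequence (still indexed by $n$) along which $\alpha_{n,k} \to \alpha_k$ for every $k \in \bbN$. The truncated sums $\sum_{k \leq K}\alpha_{n,k}\psi_{n,k}$ then converge in $\TLp{2}$ to $\sum_{k \leq K}\alpha_k\psi_k$. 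The high-frequency tail is controlled by
\[ \sum_{k > K} \alpha_{n,k}^2 \leq \lambda_{n,K+1}^{-s} \sum_{k > K} \lambda_{n,k}^s \alpha_{n,k}^2 \leq C\lambda_{n,K+1}^{-s}, \]
which tends to $0$ as $K \to \infty$, uniformly in $n$, by Weyl's law (Proposition~\ref{prop:Proofs:Back:Weyl:Law}) combined with Remark~\ref{rem:Proofs:Back:EvalTails:eigenvalueBounds}. A final diagonal argument splicing the low-frequency convergence with the uniform smallness of the tail yields $\TLp{2}$-convergence of a subsequence of $u_n$ to $u := \sum_k \alpha_k\psi_k$.

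For the liminf inequality in the $\Gamma$-convergence, suppose $(\mu_n,u_n) \to (\mu,u)$ in $\TLp{2}$. For any fixed $K$,
\[ \EnergySnepsn(u_n) = \sum_{k=1}^n \lambda_{n,k}^s \langle u_n, \psi_{n,k}\rangle_{\Lp{2}(\mu_n)}^2 \geq \sum_{k=1}^K \lambda_{n,k}^s \langle u_n, \psi_{n,k}\rangle_{\Lp{2}(\mu_n)}^2. \]
The $\TLp{2}$ convergence of $u_n$, together with the $\TLp{2}$ convergence of $\psi_{n,k}$ and the convergence $\lambda_{n,k} \to \lambda_k$ (for each $k \leq K$ once $n$ is large enough that $K \leq J_n$), gives $\liminf_n \EnergySnepsn(u_n) \geq \sum_{k=1}^K \lambda_k^s \langle u,\psi_k\rangle_{\Lp{2}(\mu)}^2$; sending $K \to \infty$ by monotone convergence finishes this step. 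For the limsup inequality, given $u \in \cH^s(\Omega)$ with spectral expansion $u = \sum_k \beta_k \psi_k$, set $u_n^K = \sum_{k=1}^K \beta_k \psi_{n,k}$. Then $u_n^K \to \sum_{k \leq K}\beta_k\psi_k$ in $\TLp{2}$ as $n \to \infty$ and $\EnergySnepsn(u_n^K) = \sum_{k=1}^K \lambda_{n,k}^s \beta_k^2 \to \sum_{k=1}^K \lambda_k^s \beta_k^2 \leq \EnergyS(u)$; a standard diagonal argument in $K = K_n \to \infty$ produces a recovery sequence converging to $u$ in $\TLp{2}$ with $\limsup_n \EnergySnepsn(u_n) \leq \EnergyS(u)$.

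The main obstacle is bookkeeping: every spectral argument above is valid only for indices $k \leq J_n$, so the uniform tail estimate and the diagonal extractions must be arranged so that $K$ grows to infinity strictly slower than $J_n$. This is the reason why assumption \eqref{eq:Main:Ass:epsLBIllPosed} (via the quantitative bound on $J_n$ in Lemma~\ref{lem:Proofs:Back:EvalTails:Jn}) appears; in particular, for any fixed $K$ we have $K \leq J_n$ eventually, which is all we need once the low-frequency and tail contributions have been decoupled.
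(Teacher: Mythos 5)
The paper does not actually prove Theorem~\ref{thm:Proofs:GConvergence:withoutConstraints}; it is stated as a background result, with the sentence immediately preceding it reading ``We now proceed to prove the $\Gamma$-convergence for our energy functionals. Our proofs use the following $\Gamma$-convergence result.'' The result is imported from the paper cited as \cite{Stuart}, and the later in-text reference to ``the proof of Theorem \ref{thm:Proofs:GConvergence:withoutConstraints}'' in Proposition~\ref{prop:Proofs:GConvergence:WellPosed:limsup} points to the argument in that source. So there is no proof in the present paper against which to compare your sketch.

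That said, your sketch captures the structure one expects: spectral expansion in the discrete eigenbasis, $\Lp{2}$ and energy bounds to control the coefficients, truncation with a tail estimate driven by Remark~\ref{rem:Proofs:Back:EvalTails:eigenvalueBounds} together with Weyl's law, and diagonal extraction both for compactness and for the recovery sequence. One genuine gap, however, is the repeated use of individual eigenvector convergence $\psi_{n,k} \to \psi_k$ in $\TLp{2}$ for each fixed $k$. If an eigenvalue $\lambda_k$ of $\Delta_\rho$ has multiplicity $m > 1$, the discrete eigenvectors corresponding to that eigencluster are only determined up to an orthogonal rotation of an $m$-dimensional space, so $\psi_{n,k}$ need not converge to any particular $\psi_k$. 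The rigorous statements in \cite{GARCIATRILLOS2018239} and \cite{Trillos} are about convergence of spectral projections onto eigenclusters, not individual eigenvectors. Your liminf, limsup and compactness steps all use the convergence $\langle u_n, \psi_{n,k}\rangle_{\Lp{2}(\mu_n)} \to \langle u,\psi_k\rangle_{\Lp{2}(\mu)}$ eigenvector-by-eigenvector; to make these rigorous you must either group the sums over indices in the same eigencluster and work with projections, or fix an $n$-dependent orthogonal rotation of the discrete eigenbasis within each cluster before passing to the limit. With that adjustment the strategy is sound; the rest of your bookkeeping around $J_n$ and the slow growth of the truncation level is reasonable, though your ``uniformly in $n$'' claim for the tail should be phrased more carefully as holding for all $n$ large depending on $K$, which is all the diagonal argument needs.
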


We will split the $\Gamma$ convergence proofs in two separate propositions for convenience.

\subsubsection{Well-Posed Case} 

The $\liminf$-inequality is a consequence of the compactness Proposition~\ref{prop:Proofs:Compactness:L2Compactness} and the following Poincaré inequality which is similar to \cite[Lemma 3.4]{DBLP:journals/corr/abs-1909-10221}.

\begin{lemma} \label{lem:Proofs:GConvergence:IllPosed:Minkowski}
Minkowski inequality for the discrete energy.
Assume Assumptions~\ref{ass:Main:Ass:S1}, \ref{ass:Main:Ass:M1}, \ref{ass:Main:Ass:M2}, \ref{ass:Main:Ass:D1} hold.
Then, we have \[
    \sqrt{\EnergySnepsn(u+v)} \leq \sqrt{\EnergySnepsn(u)} + \sqrt{\EnergySnepsn(v)}
\]
for any $u,v:\Omega_n \mapsto \mathbb{R}$ and $n \in \bbN$.
\end{lemma}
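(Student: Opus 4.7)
The plan is to recognize that $\sqrt{\EnergySnepsn(\cdot)}$ is in fact a seminorm on $\Lp{2}(\mu_n)$, after which the desired inequality is simply the triangle inequality for that seminorm. There are two essentially equivalent routes to make this rigorous, and I would present whichever is cleanest in the surrounding text.

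The first route uses the operator directly. Since $\Delta_{n,\eps_n} = \frac{2}{\sigma_\eta n \eps_n^2}(D_{n,\eps_n} - W_{n,\eps_n})$ is symmetric and positive semi-definite with respect to $\langle\cdot,\cdot\rangle_{\Lp{2}(\mu_n)}$, its functional calculus yields a symmetric positive semi-definite square root, and more generally $\Delta_{n,\eps_n}^{s/2}$ is symmetric and positive semi-definite. Setting $A := \Delta_{n,\eps_n}^{s/2}$, we then have
\[
\EnergySnepsn(u) = \langle u, A^2 u\rangle_{\Lp{2}(\mu_n)} = \langle Au, Au\rangle_{\Lp{2}(\mu_n)} = \|Au\|_{\Lp{2}(\mu_n)}^2,
\]
so $\sqrt{\EnergySnepsn(u)} = \|Au\|_{\Lp{2}(\mu_n)}$. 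Then by linearity of $A$ and the triangle inequality in $\Lp{2}(\mu_n)$,
\[
\sqrt{\EnergySnepsn(u+v)} = \|A(u+v)\|_{\Lp{2}(\mu_n)} \leq \|Au\|_{\Lp{2}(\mu_n)} + \|Av\|_{\Lp{2}(\mu_n)} = \sqrt{\EnergySnepsn(u)} + \sqrt{\EnergySnepsn(v)}.
\]

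The second, more elementary route avoids introducing fractional operators and uses only the eigenfunction expansion~\eqref{eq:Main:Not:Setting:DisEnergyDecomposition}. Setting $a_k = \lambda_{n,\eps_n,k}^{s/2}\langle u,\psi_{n,\eps_n,k}\rangle_{\Lp{2}(\mu_n)}$ and $b_k = \lambda_{n,\eps_n,k}^{s/2}\langle v,\psi_{n,\eps_n,k}\rangle_{\Lp{2}(\mu_n)}$, which are well-defined real numbers since $\lambda_{n,\eps_n,k}\geq 0$, linearity of the inner product in its first argument gives $\lambda_{n,\eps_n,k}^{s/2}\langle u+v,\psi_{n,\eps_n,k}\rangle_{\Lp{2}(\mu_n)} = a_k + b_k$. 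The claim then reduces to the Minkowski inequality in $\bbR^n$:
\[
\Bigl(\sum_{k=1}^n (a_k+b_k)^2\Bigr)^{1/2} \leq \Bigl(\sum_{k=1}^n a_k^2\Bigr)^{1/2} + \Bigl(\sum_{k=1}^n b_k^2\Bigr)^{1/2}.
\]

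There is essentially no obstacle: the only prerequisite is that $\Delta_{n,\eps_n}$ has non-negative eigenvalues (equivalently, that $\EnergySnepsn$ is a non-negative quadratic form), which is already established in the notation section where the eigenvalues are ordered as $0 = \lambda_{n,\eps_n,1} \leq \lambda_{n,\eps_n,2} \leq \cdots$. I would likely present the second (eigenfunction) argument in the paper since it is self-contained and matches the style of the surrounding analysis.
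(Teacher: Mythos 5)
Your second route (eigenfunction expansion reducing to Minkowski in $\bbR^n$) is essentially the paper's argument: the paper uses the same decomposition $\EnergySnepsn(w) = \sum_k \lambda_{n,k}^s \langle w,\psi_{n,k}\rangle^2$ and simply writes out the standard Cauchy--Schwarz derivation of Minkowski inline rather than citing it. Both of your routes are correct, and your first (operator square root) is a conceptually equivalent packaging of the same fact that $\sqrt{\EnergySnepsn(\cdot)}$ is a seminorm.
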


\begin{proof}
The proof is a computation:\begin{align*}
    \EnergySnepsn(u+v) &= \sum_{k=1}^n \lambda_{n,k}^s \langle u + v, \psi_{n,k} \rangle^2\\
    &= \sum_{k=1}^n \lambda_{n,k}^{s} \langle u , \psi_{n,k} \rangle \langle u + v, \psi_{n,k} \rangle + \sum_{k=1}^n \lambda_{n,k}^{s} \langle v , \psi_{n,k} \rangle \langle u + v, \psi_{n,k} \rangle\\
    &\leq \sqrt{\sum_{k=1}^n \lambda_{n,k}^s \langle u , \psi_{n,k} \rangle^2 } \sqrt{\sum_{k=1}^n \lambda_{n,k}^{s} \langle u+v , \psi_{n,k} \rangle^2} \\
    & \qquad \qquad + \sqrt{\sum_{k=1}^n \lambda_{n,k}^s \langle v , \psi_{n,k} \rangle^2 } \sqrt{\sum_{k=1}^n \lambda_{n,k}^{s} \langle u+v , \psi_{n,k} \rangle^2}\\
    &= \sqrt{\EnergySnepsn(u+v)}\left( \sqrt{\EnergySnepsn(u)} + \sqrt{\EnergySnepsn(v)} \right). \qedhere
\end{align*}
\end{proof}

\begin{remark}
Minkowski inequality for the continuum energy.
We note that the proof of Lemma \ref{lem:Proofs:GConvergence:IllPosed:Minkowski} can equally be applied to prove the same inequality for $\EnergyS(\cdot)$ on the set $\mathcal{H}^s$.
\end{remark}

\begin{proposition} \label{prop:Proofs:Bounds:Poincaré}
Discrete Poincaré inequality.
Assume Assumptions~\ref{ass:Main:Ass:S1}, \ref{ass:Main:Ass:M1}, \ref{ass:Main:Ass:M2}, \ref{ass:Main:Ass:D1}, \ref{ass:Main:Ass:W1}, \ref{ass:Main:Ass:W2} and~\ref{ass:Main:Ass:L1} hold, $\eps_n$ satisfies~\eqref{eq:Main:Ass:epsLBWellPosed}, and $\rho\in\Ck{2}$.
Let $K_n$ be as in Proposition \ref{prop:Proofs:Back:DisReg:EVecAlt} and assume that $\Vert \psi_{n,k} \Vert_{\Lp{\infty}} \leq C_{\psi} \lambda_{k}^{\alpha}$ for all $k \leq \ceil{K_n}$ and some $\alpha > 0$. Let $s > 2\alpha + 2 + d/2$ and assume that $n\eps_n^{s/2 - 1/2}$ is bounded.
For a function $u:\Omega \mapsto \mathbb{R}$, we define $\bar{u} = \frac{1}{N}\sum_{i=1}^N u(x_i)$. Then, there exists $C >0$ such that, $\mathbb{P}$-a.e, we have
\begin{equation}
    \Vert u_n - \bar{u}_n \Vert_{\Lp{\infty}(\mu_n)} \leq C \sqrt{\EnergySnepsn(u_n)}
    \label{eq:Proofs:Bounds:Poincaré}
\end{equation}
for all $n$ and any $u_n:\Omega_n \mapsto \mathbb{R}$.
\end{proposition}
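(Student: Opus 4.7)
The plan is to bound $\|u_n - \bar{u}_n\|_{\Lp{\infty}(\mu_n)}$ by a suitable application of Corollary~\ref{cor:Proofs:Back:DisReg:GlobReg}, but first I must dispose of an obstacle. Setting $v_n = u_n - \bar u_n$, the naive approach of plugging $v_n$ directly into Corollary~\ref{cor:Proofs:Back:DisReg:GlobReg} fails: one obtains $|v_n(x_i) - v_n(x_j)| \leq C(\sqrt{\EnergySnepsn(u_n)} + \Vert v_n\Vert_{\Lp{2}})$ where $C$ incorporates the (fixed) torus diameter, so the resulting estimate $\Vert v_n\Vert_{\Lp{\infty}(\mu_n)} \leq C\sqrt{\EnergySnepsn(u_n)} + C\Vert v_n\Vert_{\Lp{2}}$ cannot be closed (since $\Vert v_n\Vert_{\Lp{2}} \leq \Vert v_n\Vert_{\Lp{\infty}(\mu_n)}$ is the wrong direction). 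The circularity comes from the $k=1$ (constant eigenmode) contribution to the $\Lp{2}$ norm, which the energy cannot see. This is the main conceptual difficulty.

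To circumvent this, I would work with a different centering. Set $w_n := u_n - \tfrac{1}{n}\sum_i u_n(x_i)$, so that $w_n$ is orthogonal to $\psi_{n,1}$ in $\Lp{2}(\mu_n)$. Two observations: first, $\EnergySnepsn(w_n) = \EnergySnepsn(u_n)$ since $w_n$ and $u_n$ differ by a constant; second, the eigenfunction decomposition yields
\[
\Vert w_n\Vert_{\Lp{2}(\mu_n)}^2 = \sum_{k\geq 2}\langle w_n,\psi_{n,k}\rangle^2 \leq \lambda_{n,2}^{-s}\EnergySnepsn(u_n) \leq C\,\EnergySnepsn(u_n),
\]
where for the last step I use Remark~\ref{rem:Proofs:Back:EvalTails:eigenvalueBounds} together with Proposition~\ref{prop:Proofs:Back:Weyl:Law} to ensure $\lambda_{n,2}$ is bounded away from $0$ for $n$ large.

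Now applying Corollary~\ref{cor:Proofs:Back:DisReg:GlobReg} to $w_n$, and using that $\dTorus$ is uniformly bounded on $\Omega$ (Assumption~\ref{ass:Main:Ass:S1}), that $n\eps_n^{s/2-1/2}$ is bounded by hypothesis, and the $\Lp{2}$ bound above, I obtain $|w_n(x_i) - w_n(x_j)| \leq C\sqrt{\EnergySnepsn(u_n)}$ for every $x_i,x_j\in\Omega_n$. Because $w_n$ has zero $\mu_n$-mean, $\min_i w_n(x_i) \leq 0 \leq \max_i w_n(x_i)$, so $\Vert w_n\Vert_{\Lp{\infty}(\mu_n)} \leq \mathrm{osc}(w_n) \leq C\sqrt{\EnergySnepsn(u_n)}$.

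Finally, I translate back to $v_n$. Writing $m_n := \tfrac{1}{n}\sum_j u_n(x_j) - \bar u_n$, one has $v_n = w_n + m_n$. The defining identity $\sum_{k=1}^N v_n(x_k) = 0$ gives $\sum_{k=1}^N w_n(x_k) + N m_n = 0$, hence $|m_n| \leq \tfrac{1}{N}\sum_{k=1}^N |w_n(x_k)| \leq \Vert w_n\Vert_{\Lp{\infty}(\mu_n)} \leq C\sqrt{\EnergySnepsn(u_n)}$. The triangle inequality then yields $\Vert v_n\Vert_{\Lp{\infty}(\mu_n)} \leq \Vert w_n\Vert_{\Lp{\infty}(\mu_n)} + |m_n| \leq C\sqrt{\EnergySnepsn(u_n)}$, establishing~\eqref{eq:Proofs:Bounds:Poincaré}. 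The crucial role of the labels enters only in this last step: it is precisely the $N$ pointwise anchors that convert control on the $\mu_n$-centered function $w_n$ into control of the $\bar u_n$-centered function $v_n$.
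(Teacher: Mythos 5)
Your proof is correct and takes a genuinely different route from the paper's. The paper argues by contradiction: it supposes the inequality fails along a subsequence, normalizes to get $\Vert v_{n_m}\Vert_{\Lp{\infty}(\mu_{n_m})}=1$, $\bar v_{n_m}=0$, and vanishing energy, applies the $\TLp{2}$/$\Lp{\infty}$ compactness result (Proposition~\ref{prop:Proofs:Compactness:L2Compactness}) to extract a continuous limit $v$, then uses the $\Gamma$-liminf inequality (Theorem~\ref{thm:Proofs:GConvergence:withoutConstraints}) to force $\EnergyS(v)=0$, so that $v$ is constant and therefore zero, contradicting $\Vert v\Vert_{\Lp{\infty}}=1$. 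Your argument is direct: you recenter at the $\mu_n$-mean so that the constant mode is killed, exploit the spectral gap $\lambda_{n,2}\geq \lambda_2\delta_0>0$ (Remark~\ref{rem:Proofs:Back:EvalTails:eigenvalueBounds}) to control $\Vert w_n\Vert_{\Lp{2}(\mu_n)}$ by $\sqrt{\EnergySnepsn(u_n)}$, and then feed this into Corollary~\ref{cor:Proofs:Back:DisReg:GlobReg}; the mean-zero condition converts the resulting oscillation bound into an $\Lp{\infty}$ bound, and the label-centering constant is absorbed using $\sum_{k\leq N}v_n(x_k)=0$. Your identification of the circularity problem in a naive application of the global regularity estimate to $u_n - \bar u_n$ is exactly right, and the two-stage centering is a clean fix. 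The payoff of your approach is that it is more quantitative and avoids the compactness and $\Gamma$-convergence machinery altogether (the paper's proof requires both, making it logically heavier even though the final inequality is the same); the payoff of the paper's soft argument is that it would survive in settings where one cannot explicitly bound $\lambda_{n,2}$ from below. Both proofs share the same implicit restriction to $n$ large enough (needed for the eigenvalue comparison and, in the paper's case, for compactness), even though the statement says ``for all $n$''.
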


\begin{proof}
In the proof $C>0$ will denote a constant that can be arbitrarily large, independent of $n$ that may change from line to line. With probability one, we can assume that the conclusions of Proposition \ref{prop:Proofs:Compactness:L2Compactness}, Theorem \ref{thm:Proofs:GConvergence:withoutConstraints} hold.

Assume that \eqref{eq:Proofs:Bounds:Poincaré} does not hold. Then, there exists a sequence $\{n_m\}_{m=1}^\infty \subseteq \mathbb{N}$ and functions $\{u_{n_m}:\Omega_{n_m} \mapsto \mathbb{R}\}_{m=1}^\infty$ such that \begin{equation}
    \Vert u_{n_m} - \bar{u}_{n_m} \Vert_{\Lp{\infty}(\mu_{n_m})} > m \sqrt{\mathcal{E}^{(s)}_{n_m,\eps_{n_m}}(u_{n_m})}.
    \label{eq:Proofs:Bounds:Poincaré:contradiction}
\end{equation}

Define 
\[
v_{n_m} = \frac{u_{n_m} - \bar{u}_{n_m}}{\Vert u_{n_m} - \bar{u}_{n_m} \Vert_{\Lp{\infty}(\mu_{n_m})}}
\]
and note that $\Vert v_{n_m} \Vert_{\Lp{\infty}(\mu_{n_m})} = 1$ as well as 
\[
    \bar{v}_{n_m} 
    = \frac{\bar{v}_{n_m} - \bar{v}_{n_m}} {\Vert u_{n_m} - \bar{u}_{n_m} \Vert_{\Lp{\infty}(\mu_{n_m})}} = 0.
\]
Using Lemma \ref{lem:Proofs:GConvergence:IllPosed:Minkowski} and \eqref{eq:Proofs:Bounds:Poincaré:contradiction}, we furthermore obtain
\[
    \sqrt{\mathcal{E}_{n_m}^{(s)}(v_{n_m})} \leq \frac{1}{\Vert u_{n_m} - \bar{u}_{n_m} \Vert_{\Lp{\infty}(\mu_{n_m})}} \left(\sqrt{ \mathcal{E}^{(s)}_{n_m}(u_{n_m})} + \sqrt{\mathcal{E}^{(s)}_{n_m}(\bar{u}_{n_m})}\right)
    < \frac{1}{m}.
\]
Hence, we can apply Proposition \ref{prop:Proofs:Compactness:L2Compactness} to deduce the existence of a subsequence $\{n_{m_k}\}_{k=1}^\infty$ and a continuous function $v$ such that $ \max_{i \leq n_{m_k}} \vert v_{n_{m_k}}(x_i) - v(x_i) \vert \to 0$.

We estimate as follows:
\begin{align*}
    \vert \Vert v_{n_{m_k}} \Vert_{\Lp{\infty}(\mu_{n_{m_k}})} - \Vert v \Vert_{\Lp{\infty}(\mu)} \vert &= \vert \Vert v_{n_{m_k}} \circ T_{n_{m_k}} \Vert_{\Lp{\infty}(\mu)} - \Vert v \Vert_{\Lp{\infty}(\mu)} \vert \\
    &\leq \Vert v_{n_{m_k}} \circ T_{n_{m_k}} - v \Vert_{\Lp{\infty}(\mu)} \\
    &\leq \Vert v_{n_{m_k}} \circ T_{n_{m_k}} - v \circ T_{n_{m_k}} \Vert_{\Lp{\infty}(\mu)} + \Vert v \circ T_{n_{m_k}} - v\Vert_{\Lp{\infty}(\mu)} \\
    &=: A + B.
\end{align*}
The $A$ term tends to 0 by \eqref{eq:Proofs:Compactness:L2Compactness}.
For the $B$ term, let $\bar{\epsilon} > 0$. Since $v$ is a uniformly continuous function, there exists $\delta > 0$ such that $\vert x - y \vert < \delta$ implies $\vert v(x) - v(y) \vert < \bar{\epsilon}$.  By Theorem~\ref{thm:Back:TLp:LinftyMapsRate}, we know that $\Vert \Id - T_n \Vert_{\Lp{\infty}} \to 0$ and hence, there exists a $n_0$ such that for $n \geq n_0$, $\Vert \Id - T_n \Vert_{\Lp{\infty}} < \delta$. Therefore, for $n_{m_k} \geq n_0$, we have $\vert v(T_{n_{m_k}}(x)) - v(x) \vert < \bar{\epsilon}$ which implies that
$B$ tends to 0. 
Combining the latter, we obtain $\Vert v \Vert_{\Lp{\infty}(\mu)} = \lim_{k \to \infty} \Vert v_{n_{m_k}} \Vert_{\Lp{\infty}(\mu_{n_{m_k}})} = 1.$
Furthermore,
\[
    \vert \bar{v} - \bar{v}_{n_{m_k}} \vert \leq \frac{1}{N} \sum_{i=1}^N \vert v(x_i) - v_{n_{m_k}}(x_i) \vert \leq \max_{i \leq n_{m_k}} \vert v(x_i) - v_{n_{m_k}}(x_i) \vert
\]
which tends to 0 by \eqref{eq:Proofs:Compactness:L2Compactness} and we have $\bar{v} = \lim_{k \to \infty} \bar{v}_{n_{m_k}} = 0$.

Finally, by an application of Theorem \ref{thm:Proofs:GConvergence:withoutConstraints}, we obtain 
\[
    0 = \liminf_{k \to \infty} \frac{1}{m_k^2} > \liminf_{k \to \infty} \mathcal{E}^{(s)}_{n_{m_k}} (v_{n_{m_k}}) \geq \EnergyS(v).
\]
We conclude that $v$ has to be a constant function (as with probability one the graph is connected) and, since $\bar{v} = 0$, $v = 0$ which contradicts $\Vert v \Vert_{\Lp{\infty}} = 1$.

\end{proof}

\begin{proposition} \label{prop:Proofs:GConvergence:WellPosed:liminf}
$\liminf$-inequality in the well-posed case. 
Assume Assumptions~\ref{ass:Main:Ass:S1}, \ref{ass:Main:Ass:M1}, \ref{ass:Main:Ass:M2}, \ref{ass:Main:Ass:D1}, \ref{ass:Main:Ass:W1}, \ref{ass:Main:Ass:W2} and~\ref{ass:Main:Ass:L1} hold, $\eps_n$ satisfies~\eqref{eq:Main:Ass:epsLBWellPosed}, and $\rho\in\Ck{2}$.
Let $K_n$ be as in Proposition \ref{prop:Proofs:Back:DisReg:EVecAlt} and assume that $\Vert \psi_{n,k} \Vert_{\Lp{\infty}} \leq C_{\psi} \lambda_{k}^{\alpha}$ for all $k \leq \ceil{K_n}$ and some $\alpha > 0$. Let $s > 2\alpha + 2 + d/2$ and assume that $n\eps_n^{s/2 - 1/2}$ is bounded.
Then, $\bbP$-a.s., we have 
\begin{equation} \label{eq:Proofs:GConvergence:WellPosed:liminf}
    \liminf_{n \to \infty} \cF_{n,\eps_n}((\nu_n,v_n)) \geq \cF(\nu,v)
\end{equation} 
for any $(\nu,v) \in \TLp{2}(\Omega)$ and $\{(\nu_n,v_n)\}_{n=1}^\infty \subseteq \TLp{2}(\Omega)$ such that $(\nu_n, v_n) \to (\nu, v)$ in $\TLp{2}$. 
\end{proposition}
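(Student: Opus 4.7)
The plan is to reduce the statement to the unconstrained $\Gamma$-convergence of Theorem \ref{thm:Proofs:GConvergence:withoutConstraints} after verifying that the constraints $v(x_i) = \ell_i$ for $i \leq N$ survive the limit, which is where the uniform compactness from Proposition \ref{prop:Proofs:Compactness:L2Compactness} does the decisive work.

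First I would assume without loss of generality that $\liminf_{n\to\infty}\cF_{n,\eps_n}((\nu_n,v_n)) < +\infty$; otherwise the inequality is trivial. Passing to a subsequence (not relabeled) that realizes this $\liminf$ as a genuine limit, I may further assume that $\cF_{n,\eps_n}((\nu_n,v_n)) < +\infty$ for all $n$, hence by the definition \eqref{eq:Main:Not:Fn} we have $\nu_n = \mu_n$, $v_n(x_i) = \ell_i$ for all $i \leq N$, and $\sup_n \EnergySnepsn(v_n) < +\infty$. Since $(\mu_n,v_n)\to(\nu,v)$ in $\TLp{2}$ and $\mu_n\weakstarto\mu$ $\bbP$-a.s., we also have $\nu = \mu$.

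Next I would establish a uniform $\Lp{\infty}(\mu_n)$ bound on $v_n$ using the discrete Poincaré inequality of Proposition \ref{prop:Proofs:Bounds:Poincaré}. Since the label constraint forces $\bar v_n = \frac{1}{N}\sum_{i=1}^N \ell_i$ to be a fixed, $n$-independent constant, the inequality $\|v_n - \bar v_n\|_{\Lp{\infty}(\mu_n)} \leq C\sqrt{\EnergySnepsn(v_n)}$ together with $\sup_n \EnergySnepsn(v_n) < +\infty$ gives $\sup_n \|v_n\|_{\Lp{\infty}(\mu_n)} < +\infty$. Proposition \ref{prop:Proofs:Compactness:L2Compactness} now applies: along a further subsequence there is a continuous $\tilde v$ with $\max_{i \leq n_k} |v_{n_k}(x_i) - \tilde v(x_i)| \to 0$ and $(\mu_{n_k},v_{n_k}) \to (\mu,\tilde v)$ in $\TLp{2}$. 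By uniqueness of $\TLp{2}$ limits we must have $\tilde v = v$ $\mu$-a.e., and since $\tilde v$ is continuous we use it as the canonical representative of $v$. The uniform convergence at the labeled points then yields $v(x_i) = \lim_{k\to\infty} v_{n_k}(x_i) = \ell_i$ for all $i \leq N$, so the pointwise constraints pass to the limit.

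Finally, applying Theorem \ref{thm:Proofs:GConvergence:withoutConstraints} (the unconstrained $\Gamma$-$\liminf$) to the convergent sequence $(\mu_{n_k},v_{n_k}) \to (\mu,v)$ in $\TLp{2}$ gives
\[
\liminf_{k\to\infty} \EnergySnepsn[n_k](v_{n_k}) \geq \EnergyS(v),
\]
which forces $v \in \cH^s(\Omega)$ because the left-hand side is finite. Combining this with the constraint information shows $\cF((\mu,v)) = \EnergyS(v)$, and so
\[
\liminf_{n\to\infty} \cF_{n,\eps_n}((\nu_n,v_n)) = \lim_{k\to\infty}\EnergySnepsn[n_k](v_{n_k}) \geq \EnergyS(v) = \cF((\nu,v)),
\]
proving \eqref{eq:Proofs:GConvergence:WellPosed:liminf}. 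The only nontrivial step is the transfer of the pointwise label constraints across the limit, which is precisely why the uniform convergence \eqref{eq:Proofs:Compactness:L2Compactness} of Proposition \ref{prop:Proofs:Compactness:L2Compactness}, rather than mere $\TLp{2}$ convergence, is essential; everything else follows from the Poincaré inequality plus the unconstrained $\Gamma$-$\liminf$.
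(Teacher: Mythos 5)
Your proof is correct and follows essentially the same strategy as the paper's: reduce to the constrained sequence, use the discrete Poincar\'e inequality to get an $\Lp{\infty}$ bound, invoke Proposition~\ref{prop:Proofs:Compactness:L2Compactness} to extract a continuous limit $\tilde v$ with uniform convergence at the sample points, pass the label constraints to the limit, and apply Theorem~\ref{thm:Proofs:GConvergence:withoutConstraints}. The one place you deviate is in identifying $\tilde v$ with $v$: you appeal directly to uniqueness of limits in the $\TLp{2}$ metric (using the $\TLp{2}$-convergence conclusion of Proposition~\ref{prop:Proofs:Compactness:L2Compactness}), whereas the paper spells this out by an explicit three-term $\Lp{2}$ estimate showing $\int_\Omega|v-\hat v|^2\,\dd\mu=0$. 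Both routes are valid; your shortcut is a little cleaner since the $\TLp{2}$ convergence is already part of the compactness proposition's conclusion, while the paper's expansion makes the same fact visible at the cost of a few lines.
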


\begin{proof}
With probability one, we can assume that the conclusions of Proposition ~\ref{prop:Proofs:Compactness:L2Compactness} and Theorem \ref{thm:Proofs:GConvergence:withoutConstraints} hold.

We start by noting that if $\liminf_{n \to \infty} \mathcal{F}_{n,\eps_n}((\nu_n,v_n)) = \infty$, then \eqref{eq:Proofs:GConvergence:WellPosed:liminf} is trivial. We therefore assume that $\liminf_{n \to \infty} \mathcal{F}_{n,\eps_n}((\nu_n,v_n)) < \infty$ and hence, there exists a subsequence $\{n_k\}_{k=1}^\infty$ such that 
\[
\lim_{k \to \infty} \mathcal{F}_{n_k,\eps_{n_k}}((\nu_{n_k},v_{n_k})) = \liminf_{n \to \infty} \mathcal{F}_{n,\eps_n}((\nu_n,v_n)) < \infty.
\]
Hence, \eqref{eq:Proofs:GConvergence:WellPosed:liminf} is equivalent to showing 
\[
    \liminf_{n \to \infty} \mathcal{F}_{n,\eps_n}((\nu_n,v_n)) = \lim_{k \to \infty} \mathcal{F}_{n_k,\eps_{n_k}}((\nu_{n_k},v_{n_k})) \geq \mathcal{F}((\nu,v)).
\]
In particular, this shows that we might assume that $\{\mathcal{F}_{n,\eps_n}((\nu_n,v_n))\}_{n=1}^\infty$ is uniformly bounded by a constant~$C$. By~\eqref{eq:Main:Not:Fn}, this also means that $\nu_n = \mu_n$ for all $n \in \bbN$ and that for $i \leq N$, $v_n(x_i) = \ell_i$. Consequently, we have 
\[
    \mathcal{F}_{n,\eps_n}((\nu_n,v_n)) = \EnergySnepsn(v_n)
\]
and, by Proposition \ref{prop:Proofs:Bounds:Poincaré},
\[ \Vert u_n \Vert_{\Lp{\infty}} \leq \Vert u_n - \frac{1}{N} \sum_{i=1}^N u_n(x_i) \Vert_{\Lp{\infty}} + \Vert \frac{1}{N} \sum_{i=1}^N u_n(x_i) \Vert_{\Lp{\infty}} \leq C \sqrt{\EnergySnepsn(u_n)} + \frac{1}{N} \sum_{i=1}^N \vert\ell_i\vert \leq C \]
which implies $\sup_{n\in\bbN}\Vert u_n \Vert_{\Lp{\infty}}<+\infty$.
By Proposition \ref{prop:Back:TLp}, $(\nu_n,v_n) \to (\nu,v)$ in $\TLp{2}$ implies that $\{\nu_n\}_{n=1}^\infty = \{\mu_n\}_{n=1}^\infty$ converges weakly to $\nu$. As weak limits are unique and $\{\mu_n\}_{n=1}^\infty$ converge weakly to $\mu$, we deduce that $\mu = \nu$. Hence, there exists transport maps $\{T_n\}_{n=1}^\infty$ from $\mu$ to $\mu_n$ satisfying the rates in Theorem \ref{thm:Back:TLp:LinftyMapsRate}.

We apply Proposition \ref{prop:Proofs:Compactness:L2Compactness} in order to deduce the existence of a continuous function $\hat{v}$ and a subsequence $\{n_k\}_{k=1}^\infty$ such that $\max_{i \leq n_k} \vert v_{n_k}(x_i) - \hat{v}(x_i) \vert \to 0$. In particular, this means that for $i \leq N$ and any $n_k$: 
\[
    \vert \hat{v}(x_i) - \ell_i \vert = \vert \hat{v}(x_i) - v_{n_k}(x_i) \vert \leq \max_{j \leq n_k} \vert \hat{v}(x_j) - v_{n_k}(x_j) \vert \to 0,
\]
from which we deduce that $\hat{v}(x_i) = \ell_i $ for all $i \leq N$. 

For any $n_k$, we estimate as follows: 
\begin{align}
    \int_\Omega \vert v(x) - \hat{v}(x) \vert^2 \, \dd\mu(x) &\leq 2\int_\Omega \vert v(x) - v_{n_k}(T_{n_k}(x))  \vert^2 \,\dd\mu(x) + 2 \int_\Omega \vert v_{n_k}(T_{n_k}(x)) - \hat{v}(T_{n_k}(x)) \vert^2 \, \dd\mu(x) \notag\\
    & \qquad + 2\int_\Omega \vert \hat{v}(T_{n_k}(x)) - \hat{v}(x) \vert^2 \,\dd\mu(x) \notag\\
    &=: 2(A + B + D). \notag
\end{align}
Note that $ B \leq 2\max_{i \leq n_k} \vert v_{n_k}(x_i) - \hat{v}(x_i) \vert^2$ and the latter tends to 0 by \eqref{eq:Proofs:Compactness:L2Compactness}. The $D$ term goes to 0 as shown in Proposition \ref{prop:Proofs:Bounds:Poincaré}.
Finally, $A$ tends to 0 by the assumption that $v_n \to v$ in $\TLp{2}$. Combining the latter three results, we deduce that $v = \hat{v} $ $\mu$-almost everywhere and since $\hat{v}$ is continuous then in fact $v=\hat{v}$ everywhere, in particular $v$ satisfies the constraints.
By Theorem \ref{thm:Proofs:GConvergence:withoutConstraints}

 \[ \liminf_{n \to \infty} \mathcal{F}_{n,\eps_n}((\nu_n,v_n)) = \liminf_{n \to \infty} \EnergySnepsn(v_n) \geq \EnergyS(v) \qedhere \] 
\end{proof}


The $\limsup$-inequality requires the following technical lemma.

\begin{lemma} \label{lem:Proofs:GConvergence:WellPosed:Bound}
Bounded energies.
Assume Assumptions~\ref{ass:Main:Ass:S1}, \ref{ass:Main:Ass:M1}, \ref{ass:Main:Ass:M2}, \ref{ass:Main:Ass:D1}, \ref{ass:Main:Ass:W1}, \ref{ass:Main:Ass:W2} and~\ref{ass:Main:Ass:L1} hold. 
Assume that $\rho \in \Ck{\infty}$ and that $\eps_n$ satisfies \eqref{eq:Main:Ass:epsLBWellPosed}. 
For $u \in \Ck{0}(\Omega)$, let $u_n$ be the restrictions of $u$ to $\Omega_n$.
Then, $\bbP$-a.s., for any $k\in \bbN$ and $u \in \Ck{\infty}(\Omega)$, there exists a constant $C=C(k,u) > 0$ such that  
\begin{equation} \label{eq:Proofs:GConvergence:WellPosed:Bound}
\sup_n \cE_{n,\eps_n}^{(2k)}(u_n) \leq C.
\end{equation}
\end{lemma}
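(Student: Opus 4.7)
The plan is to prove, by induction on $j \leq k$, the stronger statement that $\bbP$-a.s. $\sup_n \|\Delta_{n,\eps_n}^j u_n\|_{\Lp{\infty}(\mu_n)} < \infty$ for any $u \in \Ck{\infty}(\Omega)$. Since $\mu_n$ is a probability measure, this immediately yields $\cE_{n,\eps_n}^{(2k)}(u_n) = \|\Delta_{n,\eps_n}^k u_n\|_{\Lp{2}(\mu_n)}^2 \leq \|\Delta_{n,\eps_n}^k u_n\|_{\Lp{\infty}(\mu_n)}^2$, which is the desired bound.

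The base case $j=0$ is immediate from $\|u_n\|_{\Lp{\infty}(\mu_n)} \leq \|u\|_{\Lp{\infty}(\Omega)}$. For the inductive step, the core tool is the pointwise consistency estimate
\[
    \|\Delta_{n,\eps_n} v_n - (\Delta_\rho v)_n\|_{\Lp{\infty}(\mu_n)} \to 0 \quad \text{a.s.}
\]
valid for any $v \in \Ck{\infty}(\Omega)$ under assumption \eqref{eq:Main:Ass:epsLBWellPosed}, which follows from classical graph-Laplacian consistency results (in the spirit of those underlying Theorem~\ref{thm:Proofs:Back:DisReg:DisRegCalder}, see~\cite{Calder}). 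Applying this with $v = \Delta_\rho^{j-1} u$, which is smooth because $\rho \in \Ck{\infty}$, controls $\|\Delta_{n,\eps_n}(\Delta_\rho^{j-1} u)_n\|_{\Lp{\infty}(\mu_n)}$. Writing $\Delta_{n,\eps_n}^{j-1} u_n = (\Delta_\rho^{j-1} u)_n + R_n^{(j-1)}$, the inductive step then reduces to bounding $\Delta_{n,\eps_n} R_n^{(j-1)}$ in $\Lp{\infty}(\mu_n)$.

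The main obstacle is precisely this last step: a crude use of the operator-norm bound $\|\Delta_{n,\eps_n}\|_{\Lp{\infty} \to \Lp{\infty}} = O(\eps_n^{-2})$ together with $R_n^{(j-1)} = o(1)$ does not close. To get around this, I would strengthen the induction hypothesis to a quantitative rate $\|R_n^{(j-1)}\|_{\Lp{\infty}(\mu_n)} = O(\eps_n^2)$, which comes from second-order Taylor expansion of the smooth iterate $\Delta_\rho^{j-1} u$ combined with the radial symmetry of $\eta$ (killing the first-order contribution), plus a stochastic error coming from concentration of the sums $\frac{1}{n}\sum_j \eta_{\eps_n}(|x_i-x_j|) g(x_i,x_j)$ around their expectations; the scaling \eqref{eq:Main:Ass:epsLBWellPosed} is precisely what makes this stochastic error $o(\eps_n^2)$ uniformly in $i$, after passing to $\bbP$-a.s. statements via Borel--Cantelli. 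Once $\|R_n^{(j-1)}\|_{\Lp{\infty}} = O(\eps_n^2)$ is established, the operator-norm factor $O(\eps_n^{-2})$ absorbs the $\eps_n^2$, yielding $\|\Delta_{n,\eps_n} R_n^{(j-1)}\|_{\Lp{\infty}(\mu_n)} = O(1)$ and closing the induction. An alternative, more computational route is to expand $\Delta_{n,\eps_n}^k u_n$ directly as a $k$-fold nested finite-difference sum of $u$ against products of $\eta_{\eps_n}$ kernels and perform Taylor expansion of $u$ to order $2k$ at each base point, the leading term producing $(\Delta_\rho^k u)_n$ with remainders bounded by $\|u\|_{\Ck{2k+2}}$; the hard part there is the combinatorial bookkeeping of error terms across levels.
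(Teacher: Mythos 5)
Your proposal correctly identifies the obstruction — the crude bound $\|\Delta_{n,\eps_n}\|_{\Lp{\infty}\to\Lp{\infty}} = O(\eps_n^{-2})$ does not absorb an $o(1)$ error — but the fix you sketch does not close. Write $R_n^{(j)} = \Delta_{n,\eps_n}^j u_n - (\Delta_\rho^j u)_n$; the recursion is
\[
R_n^{(j)} = \bigl[\Delta_{n,\eps_n}(\Delta_\rho^{j-1}u)_n - (\Delta_\rho^j u)_n\bigr] + \Delta_{n,\eps_n}R_n^{(j-1)}.
\]
Even if you had $\|R_n^{(j-1)}\|_{\Lp{\infty}} = O(\eps_n^2)$, the crude operator-norm bound only gives $\|\Delta_{n,\eps_n}R_n^{(j-1)}\|_{\Lp{\infty}} = O(1)$, so at best $\|R_n^{(j)}\|_{\Lp{\infty}} = O(1)$, \emph{not} $O(\eps_n^2)$; the induction hypothesis is strictly lost after one step. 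Iterating once more then produces $\|R_n^{(j+1)}\|_{\Lp{\infty}} = O(\eps_n^{-2})$, which diverges, so the scheme fails already at $j=3$. Your observation that $\Delta_{n,\eps_n}R_n^{(j-1)}$ is $O(1)$ ``closes the induction'' is precisely the false step: $O(1)$ is not $O(\eps_n^2)$, and boundedness of $\Delta_{n,\eps_n}^j u_n$ does not on its own feed back a small-enough bound on $R_n^{(j)}$ to continue. To maintain the rate you would need $\Delta_{n,\eps_n}R_n^{(j-1)} = O(\eps_n^2)$, a gain of a factor $\eps_n^4$ over the operator-norm estimate, and nothing in the sketch provides that cancellation (it would require discrete higher-order regularity of $R_n^{(j-1)}$, not just smallness in $\Lp{\infty}$). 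A secondary issue: the stochastic contribution to $R_n^{(1)}$ is of order $\sqrt{\log n/(n\eps_n^{d+2})} = \eps_n\sqrt{\log n/(n\eps_n^{d+4})} = o(\eps_n)$ under~\eqref{eq:Main:Ass:epsLBWellPosed}, which is $o(\eps_n)$ but generically \emph{not} $O(\eps_n^2)$, so even the base case of the strengthened claim is not guaranteed.

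The paper sidesteps this blow-up entirely by never applying $\Delta_{n,\eps_n}$ to the error term. It works in $\Lp{2}(\mu_n)$ and compares $\langle\Delta_{n,\eps_n}^k u,\Delta_{n,\eps_n}^k u\rangle$ with $\langle\Delta_\rho^k u,\Delta_\rho^k u\rangle$, writing the difference via a telescoping expansion $\Delta_\rho^k - \Delta_{n,\eps_n}^k = \sum_i \Delta_{n,\eps_n}^i(\Delta_\rho-\Delta_{n,\eps_n})\Delta_\rho^{k-1-i}$. The key move is then to use \emph{self-adjointness} of $\Delta_{n,\eps_n}$ in $\Lp{2}(\mu_n)$ together with Cauchy--Schwarz so that the iterated factor $\Delta_{n,\eps_n}^i$ lands on the smooth function $\Delta_\rho^k u$, producing an energy $\cE_{n,\eps_n}^{(2i)}(\Delta_\rho^k u)$ that is bounded by the inductive hypothesis, while the consistency error $(\Delta_\rho-\Delta_{n,\eps_n})\Delta_\rho^{k-1-i}u$ is only ever estimated directly, never hit by $\Delta_{n,\eps_n}$. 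One cross term is handled by polarization plus the $\Gamma$-convergence result (Theorem~\ref{thm:Proofs:GConvergence:withoutConstraints}), yielding an inequality of the form $\tfrac12\cE_{n,\eps_n}^{(2k)}(u_n) \leq C(1+\sqrt{\cE_{n,\eps_n}^{(2k)}(u_n)})$, which is then closed by an elementary quadratic argument. The Hilbert-space structure is thus essential here, and an $\Lp{\infty}$-only argument of the kind you propose cannot reproduce it without an additional discrete-regularity mechanism controlling $\Delta_{n,\eps_n}$ applied to the accumulated error.
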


\begin{proof}
In the proof $C>0$ will denote a constant that can be arbitrarily large, independent of $n$ that may change from line to line.

By Assumption \ref{eq:Main:Ass:epsLBWellPosed}, with probability one, we can assume that the conclusion of \cite[Theorem 5]{Calder_2018}, Theorem \ref{thm:Back:TLp:LinftyMapsRate} and Theorem \ref{thm:Proofs:GConvergence:withoutConstraints} hold.

We proceed to the proof by induction. Let $k = 0$. Then,  $\cE_{n,\eps_n}^{(0)} = \Vert u_n \Vert_{\Lp{2}(\Omega_n)} \leq \Vert u \Vert_{\Lp{\infty}}$ so that \eqref{eq:Proofs:GConvergence:WellPosed:Bound} is satisfied. We now assume that for any $u \in \Ck{\infty}(\Omega)$, \eqref{eq:Proofs:GConvergence:WellPosed:Bound} holds for all $l \leq 2k-2$. Recalling that the operator $\Delta_{n,\eps_n}$ is self-adjoint, we start by estimating as follows:
\begin{align}
    \cE_{n,\eps_n}^{(2k)}(u_n) &= \langle \Delta_{n,\eps_n}^k u, \Delta_{n,\eps_n}^k u \rangle_{\Lp{2}(\Omega_n)} \notag\\
    &\leq \langle \Delta_\rho^k u, \Delta_\rho^k u \rangle_{\Lp{2}(\Omega_n)} + \vert \langle \Delta_\rho^k u - \Delta_{n,\eps_n}^k u, \Delta_{n,\eps_n}^k u + \Delta_\rho^k u \rangle_{\Lp{2}(\Omega_n)} \vert \notag \\
    &\leq \langle \Delta_\rho^k u, \Delta_\rho^k u \rangle_{\Lp{2}(\Omega_n)} + \vert \langle \Delta_\rho^k u - \Delta_{n,\eps_n}^k u, \Delta_\rho^k u \rangle_{\Lp{2}(\Omega_n)} \vert \notag \\
    & \qquad + \vert \langle \Delta_\rho^k u - \Delta_{n,\eps_n}^k u, \Delta_{n,\eps_n}^k u \rangle_{\Lp{2}(\Omega_n)} \vert \notag \\
    &=: T_1 + \vert T_2 \vert + \vert T_3 \vert. \notag
\end{align}

Since $\rho \in \Ck{\infty}(\Omega)$, we have that $u\Delta_\rho^ku \in \Ck{\infty}$ and the latter is bounded on $\Omega$ by Assumption  \ref{ass:Main:Ass:S1}. Hence, as $\mu_n$ converges weakly to $\mu$, by the Portmanteau lemma \cite[Theorem 13.16]{klenke2013probability} we obtain
\[
T_1 = \int_\Omega u \Delta_\rho^{2k} u \, \dd\mu_n \to \int_\Omega u \Delta_\rho^{2k} u \, \dd\mu = \langle u, \Delta_\rho^{2k} u \rangle_{\Lp{2}(\Omega)}
\]
and so $T_1 \leq C$.

For the $T_2$ term, we note that:
\begin{align}
    T_2 &= \langle \Delta_\rho^k u , (\Delta_\rho - \Delta_{n,\eps_n}) \Delta_\rho^{k-1} u \rangle_{\Lp{2}(\mu_n)} + \langle \Delta_\rho^k u , \Delta_{n,\eps_n}(\Delta_\rho^{k-1} - \Delta_{n,\eps_n}^{k-1}) u \rangle_{\Lp{2}(\mu_n)} \label{eq:Proofs:GConvergence:WellPosed:Bound:T2}\\
    &= \sum_{i=0}^{l} \langle \Delta_\rho^k u , \Delta_{n,\eps_n}^i(\Delta_\rho - \Delta_{n,\eps_n}) \Delta_\rho^{k-1-i} u \rangle_{\Lp{2}(\mu_n)} + \langle \Delta_\rho^k u , \Delta_{n,\eps_n}^{l+1}(\Delta_\rho^{k-1-l} - \Delta_{n,\eps_n}^{k-1-l}) u \rangle_{\Lp{2}(\mu_n)} \label{eq:Proofs:GConvergence:WellPosed:Bound:T2Iterated}
\end{align}
where $l \leq k - 1$ and we iterate \eqref{eq:Proofs:GConvergence:WellPosed:Bound:T2} for \eqref{eq:Proofs:GConvergence:WellPosed:Bound:T2Iterated}. Picking $l = k - 1 $ in \eqref{eq:Proofs:GConvergence:WellPosed:Bound:T2Iterated}, we obtain:
\begin{align}
    T_2 &= \sum_{i=0}^{k-1}  \langle \Delta_\rho^k u , \Delta_{n,\eps_n}^i(\Delta_\rho - \Delta_{n,\eps_n}) \Delta_\rho^{k-1-i} u \rangle_{\Lp{2}(\mu_n)} \notag \\
    &\leq \sum_{i=0}^{k-1} \Vert \Delta_{n,\eps_n}^i \Delta_\rho^k u \Vert_{\Lp{2}(\mu_n)} \Vert (\Delta_\rho - \Delta_{n,\eps_n}) \Delta_\rho^{k-1-i} u \Vert_{\Lp{2}(\mu_n)} \notag \\
    &= \sum_{i=0}^{k-1} \sqrt{\cE_{n,\eps_n}^{(2i)}(\Delta_\rho^{k}u)} \Vert (\Delta_\rho - \Delta_{n,\eps_n}) \Delta_\rho^{k-1-i} u \Vert_{\Lp{2}(\mu_n)} \notag \\
    &\leq C \sum_{i=0}^{k-1} \Vert (\Delta_\rho - \Delta_{n,\eps_n}) \Delta_\rho^{k-1-i} u \Vert_{\Lp{2}(\mu_n)} \label{eq:Proofs:GConvergence:WellPosed:Bound:T2Hypothesis} \\
    &= C \sum_{i=0}^{k-1} \left[ \frac{1}{n}  \sum_{j=1}^n \vert \Delta_\rho(\Delta_\rho^{k-1-i}u)(x_j) - \Delta_{n,\eps_n}(\Delta_\rho^{k-1-i}u)(x_j) \vert^2 \right]^{1/2} \notag \\
    &\leq C \sum_{i=0}^k \left[ \frac{C}{n} \sum_{j=1}^n \Vert \Delta_{\rho}^{k-1-i}u \Vert_{\Ck{3}(\Omega)}^2 \right]^{1/2} \label{eq:Proofs:GConvergence:WellPosed:Bound:T2Calder}\\
    &\leq C \label{eq:Proofs:GConvergence:WellPosed:Bound:T2Final}
\end{align}
where we used the induction hypothesis \eqref{eq:Proofs:GConvergence:WellPosed:Bound} for $i \leq 2k-2$ for \eqref{eq:Proofs:GConvergence:WellPosed:Bound:T2Hypothesis} and \cite[Theorem 5]{Calder_2018} as well as Assumption \ref{ass:Main:Ass:L1} for \eqref{eq:Proofs:GConvergence:WellPosed:Bound:T2Calder}.

Similarly to the iteration process that lead to \eqref{eq:Proofs:GConvergence:WellPosed:Bound:T2Iterated} and picking $l = k-1$, we can write: \begin{align}
    T_3 &= \sum_{i=0}^{k-2} \langle \Delta_{n,\eps_n}^{k}u, \Delta_{n,\eps_n}^i(\Delta_\rho - \Delta_{n,\eps_n})\Delta_{\rho}^{k-1-i}u \rangle_{\Lp{2}(\mu_n)} + \langle \Delta_{n,\eps_n}^{k}u, \Delta_{n,\eps_n}^{k-1}(\Delta_\rho - \Delta_{n,\eps_n}) u \rangle_{\Lp{2}(\mu_n)} \notag \\
    &\leq \sqrt{\cE_{n,\eps_n}^{(2k)}(u_n)}\sum_{i=0}^{k-2} \Vert \Delta_{n,\eps_n}^i(\Delta_\rho - \Delta_{n,\eps_n})\Delta_{\rho}^{k-1-i}u \Vert_{\Lp{2}(\mu_n)} + \langle \Delta_{n,\eps_n}^{k}u, \Delta_{n,\eps_n}^{k-1}(\Delta_\rho - \Delta_{n,\eps_n}) u \rangle_{\Lp{2}(\mu_n)} \notag \\
    &\leq \sqrt{\cE_{n,\eps_n}^{(2k)}(u_n)}\sum_{i=0}^{k-2} \ls \Vert \Delta_{n,\eps_n}^{i}(\Delta_\rho^{k-i}u) \Vert_{\Lp{2}(\mu_n)} + \Vert \Delta_{n,\eps_n}^{i+1}(\Delta_\rho^{k-1-i}u)  \Vert_{\Lp{2}(\mu_n)} \rs \notag \\
    &+ \langle \Delta_{n,\eps_n}^{k}u, \Delta_{n,\eps_n}^{k-1}(\Delta_\rho - \Delta_{n,\eps_n}) u \rangle_{\Lp{2}(\mu_n)} \notag \\
    &= \sqrt{\cE_{n,\eps_n}^{(2k)}(u_n)}\sum_{i=0}^{k-2} \left[ \sqrt{\cE_{n,\eps_n}^{(2i)}(\Delta_{\rho}^{k-i}u)} + \sqrt{\cE_{n,\eps_n}^{(2i+2)}(\Delta_{\rho}^{k-1-i}u)} \right] + \langle \Delta_{n,\eps_n}^{k}u, \Delta_{n,\eps_n}^{k-1}(\Delta_\rho - \Delta_{n,\eps_n}) u \rangle_{\Lp{2}(\mu_n)} \notag \\
    &\leq C \sqrt{\cE_{n,\eps_n}^{(2k)}(u_n)} + \langle \Delta_{n,\eps_n}^{k}u, \Delta_{n,\eps_n}^{k-1}(\Delta_\rho - \Delta_{n,\eps_n}) u \rangle_{\Lp{2}(\mu_n)} \label{eq:Proofs:GConvergence:WellPosed:Bound:T3Hypothesis} \\
    &= \frac{1}{2}\left[ \Vert \Delta_{n,\eps_n}^k u + \Delta_{n,\eps_n}^{k-1}(\Delta_\rho - \Delta_{n,\eps_n}) u \Vert_{\Lp{2}(\mu_n)}^2  - \Vert \Delta_{n,\eps_n}^k u \Vert_{\Lp{2}(\mu_n)}^2 - \Vert \Delta_{n,\eps_n}^{k-1}(\Delta_\rho - \Delta_{n,\eps_n}) u  \Vert_{\Lp{2}(\mu_n)}^2   \right] \notag \\
    &+ C \sqrt{\cE_{n,\eps_n}^{(2k)}(u_n)} \notag\\
    &= \frac{1}{2}\left[ \Vert \Delta_{n,\eps_n}^{k-1}(\Delta_\rho u) \Vert_{\Lp{2}(\mu_n)}^2  - \Vert \Delta_{n,\eps_n}^k u \Vert_{\Lp{2}(\mu_n)}^2 - \Vert \Delta_{n,\eps_n}^{k-1}(\Delta_\rho - \Delta_{n,\eps_n}) u  \Vert_{\Lp{2}(\mu_n)}^2   \right] + C \sqrt{\cE_{n,\eps_n}^{(2k)}(u_n)} \notag \\
    &= \frac{1}{2}\left[ \cE_{n,\eps_n}^{(2k-2)}(\Delta_\rho u)  - \cE_{n,\eps_n}^{(2k)}(u) - \cE_{n,\eps_n}^{(2k-2)}((\Delta_\rho-\Delta_{n,\eps_n})u)   \right] + C \sqrt{\cE_{n,\eps_n}^{(2k)}(u_n)} \label{eq:Proofs:GConvergence:WellPosed:Bound:T3}
\end{align}
where we used the induction hypothesis \eqref{eq:Proofs:GConvergence:WellPosed:Bound} for $i \leq 2k-2$ for \eqref{eq:Proofs:GConvergence:WellPosed:Bound:T3Hypothesis}. 

Now, by \cite[Theorem 5]{Calder_2018}, we have
\[
\int_\Omega \vert (\Delta_\rho - \Delta_{n,\eps_n})(u)(T_n(x)) \vert^2 \, \dd \mu \leq C \eps_n^2 \to 0
\]
where $\{T_n\}_{n = 1}^\infty$ are the transport maps from Theorem \ref{thm:Back:TLp:LinftyMapsRate} which shows that $(\Delta_\rho - \Delta_{n,\eps_n})u \to 0$ in $\TLp{2}(\Omega)$ by Proposition \ref{prop:Back:TLp}. By Theorem \ref{thm:Proofs:GConvergence:withoutConstraints}, we therefore obtain:
\[
\lim_{n\to \infty} \cE_{n,\eps_n}^{(2k-2)}((\Delta_\rho-\Delta_{n,\eps_n})u) \geq \liminf_{n \to \infty} \cE_{n,\eps_n}^{(2k-2)}((\Delta_\rho-\Delta_{n,\eps_n})u) \geq \cE_\infty^{(2k-2)}(0) = 0
\]
or equivalently 
\[
- \lim_{n\to \infty} \cE_{n,\eps_n}^{(2k-2)}((\Delta_\rho-\Delta_{n,\eps_n})u) \leq 0.
\]
In the latter, if $\lim_{n\to \infty} \cE_{n,\eps_n}^{(2k-2)}((\Delta_\rho-\Delta_{n,\eps_n})u) = 0$, then we know that the sequence $\{ \cE_{n,\eps_n}^{(2k-2)}((\Delta_\rho-\Delta_{n,\eps_n})u) \}_{n=1}^\infty$ is bounded. Else, we have $\lim_{n\to \infty} -\cE_{n,\eps_n}^{(2k-2)}((\Delta_\rho-\Delta_{n,\eps_n})u) < 0$ which implies that there are only finitely many $n$ such that $ -\cE_{n,\eps_n}^{(2k-2)}((\Delta_\rho-\Delta_{n,\eps_n})u) > 0$. Again, in that case, there exists $C > 0$ such that $ -\cE_{n,\eps_n}^{(2k-2)}((\Delta_\rho-\Delta_{n,\eps_n})u) \leq C$. Inserting this in \eqref{eq:Proofs:GConvergence:WellPosed:Bound:T3}, using our induction hypothesis \eqref{eq:Proofs:GConvergence:WellPosed:Bound} and $u=u_n$ on $\Omega_n$, we obtain
\begin{equation} \label{eq:Proofs:GConvergence:WellPosed:Bound:T3Final}
    \vert T_3 \vert \leq C\sqrt{\cE_{n,\eps_n}^{(2k)}(u_n)} + \frac{1}{2} \left[\cE_{n,\eps_n}^{(2k-2)}(\Delta_\rho u) + \cE_{n,\eps_n}^{(2k)}(u_n) + C \right] \leq  C\sqrt{\cE_{n,\eps_n}^{(2k)}(u_n)} + \frac{1}{2} \cE_{n,\eps_n}^{(2k)}(u_n) + C.  
\end{equation}

Combining the result about $T_1$, \eqref{eq:Proofs:GConvergence:WellPosed:Bound:T2Final} and \eqref{eq:Proofs:GConvergence:WellPosed:Bound:T3Final}, we have
\[
    \frac{1}{2}\cE_{n,\eps_n}^{(2k)}(u_n) \leq C\l 1 + \sqrt{\cE_{n,\eps_n}^{(2k)}(u_n)} \r. 
\]
The above implies
\[ \frac12 \cE_{n,\eps_n}^{(2k)}(u_n) \leq 2C \max\lb 1,\sqrt{\cE_{n,\eps_n}^{(2k)}(u_n)} \rb\]
which in turn yields
\[ \cE_{n,\eps_n}^{(2k)}(u_n) \leq 4C \max\lb 1, 4C\rb. \qedhere \]
\end{proof}

\begin{remark}
Finite-difference approximations of $\Wkp{s}{2}$-norms.
Recalling Remark \ref{rem:Main:Res:Approximating}, one could interpret Lemma \ref{lem:Proofs:GConvergence:WellPosed:Bound} as the statement that our finite-difference approximations of the $\Wkp{s}{2}$-norm are bounded by a constant. Drawing the parallel with \cite[Theorem 2]{Bourgain01anotherlook}, this is not surprising as we informally expect $\cE_{n,\eps_n}^{(s)}(u_n) \to \tilde{C}(s,d) \Vert u \Vert_{\Wkp{s}{2}}$ for some $\tilde{C}(s,d) > 0$ and any $s$. In turn, since we have $u \in \Ck{\infty}(\Omega)$ and assume \ref{ass:Main:Ass:S1}, $\Vert u \Vert_{\Wkp{s}{2}} \leq C(s,d)$. Furthermore, with the above intuition, one expects to be able to prove Lemma \ref{lem:Proofs:GConvergence:WellPosed:Bound} for any $s \in (0,\infty)$.
\end{remark}

\begin{proposition} \label{prop:Proofs:GConvergence:WellPosed:limsup}
$\limsup$-inequality in the well-posed case.
Assume Assumptions~\ref{ass:Main:Ass:S1}, \ref{ass:Main:Ass:M1}, \ref{ass:Main:Ass:M2}, \ref{ass:Main:Ass:D1}, \ref{ass:Main:Ass:W1}, \ref{ass:Main:Ass:W2} and~\ref{ass:Main:Ass:L1} hold. Assume that $\rho \in \Ck{\infty}$ and $\eps_n$ satisfies \ref{eq:Main:Ass:epsLBWellPosed}. Let $(\nu,v) \in \TLp{2}(\Omega)$. Then, $\mathbb{P}$-a.s., there exists $\{(\nu_n,v_n)\} \subseteq \TLp{2}(\Omega)$ such that $(\nu_n,v_n) \to (\nu,v)$ in $\TLp{2}$ and
\begin{equation}\label{eq:Proofs:GConvergence:WellPosed:limsup}
    \limsup_{n\to \infty} \cF_{n,\eps_n}((\nu_n,v_n)) \leq \cF((\nu,v)).
\end{equation}
\end{proposition}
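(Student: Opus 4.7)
The strategy is standard: approximate $v$ by smooth functions that preserve the pointwise constraints, restrict them to $\Omega_n$, and conclude by a diagonal argument. If $\cF((\nu, v)) = +\infty$ the inequality is vacuous, so assume $\nu = \mu$, $v \in \cH^s(\Omega)$ and $v(x_i) = \ell_i$ for $i \leq N$. Since $s > 2\alpha + 2 + d/2 > d/2 + 2$, the Sobolev embedding recalled in Remark~\ref{rem:Main:Res:Sobolev} gives $v \in \Ck{0,\gamma}(\Omega)$, so these pointwise constraints make sense.

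\textbf{Smooth approximation preserving the constraints.} Fix smooth bump functions $\eta_i \in \Ck{\infty}(\Omega)$ with $\eta_i(x_j) = \mathbf{1}_{\{i=j\}}$ for $i,j \leq N$, a standard mollifier $\phi_\delta$, and define
\[
v_\delta := v \ast \phi_\delta + \sum_{i=1}^N \bigl(\ell_i - (v\ast\phi_\delta)(x_i)\bigr)\eta_i.
\]
Then $v_\delta \in \Ck{\infty}(\Omega)$ and $v_\delta(x_j) = \ell_j$ for $j\leq N$. Standard mollification on the torus yields $\EnergyS(v \ast \phi_\delta) \to \EnergyS(v)$ and $v\ast\phi_\delta \to v$ in $\Lp{2}(\mu)$; continuity of $v$ gives $\ell_i - (v\ast\phi_\delta)(x_i) \to 0$ for each $i$; and the Minkowski-type inequality for $\EnergyS$ from the remark after Lemma~\ref{lem:Proofs:GConvergence:IllPosed:Minkowski} combines these into $\EnergyS(v_\delta) \to \EnergyS(v)$ and $v_\delta \to v$ in $\Lp{2}(\mu)$ as $\delta \to 0$.

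\textbf{Recovery sequence for smooth $v_\delta$.} For fixed $\delta$, set $v_{n,\delta} := v_\delta|_{\Omega_n}$. Continuity of $v_\delta$ together with the transport-map estimates of Theorem~\ref{thm:Back:TLp:LinftyMapsRate} yields $(\mu_n, v_{n,\delta}) \to (\mu, v_\delta)$ in $\TLp{2}$, and the constraints $v_{n,\delta}(x_i)=\ell_i$ hold automatically, so $\cF_{n,\eps_n}((\mu_n,v_{n,\delta})) = \EnergySnepsn(v_{n,\delta})$. The key inequality is
\[
\limsup_{n\to\infty}\EnergySnepsn(v_{n,\delta}) \leq \EnergyS(v_\delta),
\]
which I prove by truncating the discrete spectral expansion at a fixed level $K$. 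For $n$ large, Lemma~\ref{lem:Proofs:Back:EvalTails:Jn} ensures $K \leq J_n$ and $\lambda_{n,k} \to \lambda_k$ for $k \leq K$; combined with the $\TLp{2}$-convergence of the eigenvectors $\psi_{n,k} \to \psi_k$ from~\cite{Trillos,GARCIATRILLOS2018239} and the continuity of $v_\delta$, the low-frequency part converges to $\sum_{k\leq K}\lambda_k^s|\langle v_\delta,\psi_k\rangle_{\Lp{2}(\mu)}|^2 \leq \EnergyS(v_\delta)$. For the tail, pick an even integer $2m > s$; since $v_\delta \in \Ck{\infty}$, Lemma~\ref{lem:Proofs:GConvergence:WellPosed:Bound} gives $\cE_{n,\eps_n}^{(2m)}(v_{n,\delta}) \leq C_{m,\delta}$, so
\[
\sum_{k=K+1}^n \lambda_{n,k}^s\bigl|\langle v_{n,\delta},\psi_{n,k}\rangle_{\Lp{2}(\mu_n)}\bigr|^2 \leq \lambda_{n,K+1}^{s-2m}\,\cE_{n,\eps_n}^{(2m)}(v_{n,\delta}) \leq C_{m,\delta}\,\lambda_{n,K+1}^{s-2m}.
\]
Sending $n \to \infty$ and then $K \to \infty$, the tail vanishes by Weyl's law (Proposition~\ref{prop:Proofs:Back:Weyl:Law}) since $s - 2m < 0$. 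A standard diagonal extraction $v_n := v_{n,\delta_n}$ with $\delta_n \to 0$ sufficiently slowly then produces a recovery sequence for $v$ itself.

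The hard step is the low-frequency limit, which hinges on the $\TLp{2}$-convergence of the graph Laplacian eigenvectors to their continuum counterparts---a delicate spectral result that we import from the existing literature rather than re-derive here. The remaining pieces---the constraint-preserving mollification, the Minkowski-based control of the correction term, and the tail estimate via Lemma~\ref{lem:Proofs:GConvergence:WellPosed:Bound}---are then essentially routine.
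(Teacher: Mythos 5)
Your proposal is correct and follows essentially the same route as the paper: reduce to $\cF((\nu,v))<\infty$, restrict a smooth $v$ to $\Omega_n$, truncate the discrete spectral expansion at a fixed level $K$, control the tail via Lemma~\ref{lem:Proofs:GConvergence:WellPosed:Bound} applied to an even integer exponent $2m>s$ together with Weyl's law, use the spectral-convergence results behind Theorem~\ref{thm:Proofs:GConvergence:withoutConstraints} for the low modes, and close with a density argument. The only real deviation is that the paper dispatches the density step abstractly by citing \cite[Remark~2.7]{Trillos3}, while you construct explicit constraint-preserving approximants $v_\delta = v*\phi_\delta + \sum_i(\ell_i-(v*\phi_\delta)(x_i))\eta_i$ and conclude with a diagonal extraction; your variant is more self-contained but relies on the same ingredients, in particular the Minkowski inequality for $\EnergyS$ from the remark after Lemma~\ref{lem:Proofs:GConvergence:IllPosed:Minkowski}. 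Two small points to tidy: (i) when quoting eigenvector convergence for the low-frequency part, this should be read at the level of eigenprojections rather than individual eigenvectors because of possible multiplicity (the cited references account for this, and the paper sidesteps it by appealing directly to the pointwise-convergence claim proven inside Theorem~\ref{thm:Proofs:GConvergence:withoutConstraints}); (ii) the hypothesis $s>2\alpha+2+d/2$ you invoke for the Sobolev embedding is not stated in Proposition~\ref{prop:Proofs:GConvergence:WellPosed:limsup} itself—the paper effectively relies on the constraints being meaningful as part of the definition of $\cF$ in~\eqref{eq:Main:Not:F}—so you may wish to phrase that sentence as a standing convention rather than as a consequence of the proposition's assumptions.
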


\begin{proof}
In the proof $C>0$ will denote a constant that can be arbitrarily large, independent of $n$ and $k$ that may change from line to line.

With probability one, we can assume that the conclusion Lemma \ref{lem:Proofs:GConvergence:WellPosed:Bound} holds.

Let us start by noticing that if $\mathcal{F}((v,\nu)) = \infty$, \eqref{eq:Proofs:GConvergence:WellPosed:limsup} is trivial. We therefore assume that $\mathcal{F}((\nu,v))<\infty$ which, by \eqref{eq:Main:Not:F}, implies that we have to prove \eqref{eq:Proofs:GConvergence:WellPosed:limsup} on the set
\[
    S = \{(\nu,v) \spaceBar \nu = \mu, v \in \mathcal{H}^s(\Omega) \text{ and } v(x_i) = \ell_i \text{ for $i \leq N$} \} \subseteq \{\mu\} \times \Lp{2}(\mu) \subseteq \TLp{2}(\Omega).
\]
We will begin by considering a dense subset of $S$ and assume that $v \in \Ck{\infty}$ and that for $ i \leq N $, $v(x_i) = \ell_i$. Let $v_n$ be the restriction of $v$ to $\Omega_n$ and let us consider $\{(v_n,\mu_n)\}_{n=1}^\infty \subseteq \TLp{2}(\Omega)$. 

We start by showing that $(\mu_n,v_n) \to (\mu,v)$ in $\TLp{2}$. Let $\{T_n\}_{n=1}^\infty$ be the transport maps from Theorem \ref{thm:Back:TLp:LinftyMapsRate}. We note that
\[
    \int_\Omega \vert v_n \circ T_n - v \vert^2 \, \dd\mu = \int_\Omega \vert v \circ T_n - v \vert^2 \,\dd\mu
\]
and the latter tends to 0 as is already shown in the proof of Proposition \ref{prop:Proofs:GConvergence:WellPosed:liminf}.

We now show that that $ \limsup_{n \to \infty}\mathcal{F}_{n,\eps_n}((\mu_n,v_n)) = \limsup_{n \to \infty} \EnergySnepsn(v_n) \leq \mathcal{F}((\mu,v)) = \EnergyS(v)$. Let $K \in \bbN$ and recall $K_n$ from Proposition \ref{prop:Proofs:Back:DisReg:EVecAlt}. Since $K_n$ tends to infinity, for $n$ large enough, $\lfloor K_n \rfloor \geq K$. Let $n$ be large enough so that the latter holds and we can apply Remark \ref{rem:Proofs:Back:EvalTails:eigenvalueBounds}. For $\gamma > 0$ such that $s + \gamma = 2m$ for some $m \in \bbN$, we can estimate as follows:
\begin{align}
    \EnergySnepsn(v_n) &= \sum_{k=1}^K \lambda_{n,k}^s \langle v_n, \psi_{n,k} \rangle^2 +  \sum_{k=K+1}^n \lambda_{n,k}^s \langle v_n, \psi_{n,k} \rangle^2 \notag \\
    &\leq \sum_{k=1}^K \lambda_{n,k}^s \langle v_n, \psi_{n,k} \rangle^2 + \lambda_{n,K}^{-\gamma} \sum_{k=K+1}^n \lambda_{n,k}^{s+\gamma} \langle v_n, \psi_{n,k} \rangle^2 \notag \\
    &\leq \sum_{k=1}^K \lambda_{n,k}^s \langle v_n, \psi_{n,k} \rangle^2 + C\lambda_{K}^{-\gamma} \cE_{n,\eps_n}^{(s+\gamma)}(v_n) \label{eq:Proofs:GConvergence:WellPosed:limsup:Remark} \\
    &\leq \sum_{k=1}^K \lambda_{n,k}^s \langle v_n, \psi_{n,k} \rangle^2 + CK^{-2\gamma/d} \cE_{n,\eps_n}^{(s+\gamma)}(v_n) \label{eq:Proofs:GConvergence:WellPosed:limsup:Weyl} \\
    &\leq \sum_{k=1}^K \lambda_{n,k}^s \langle v_n, \psi_{n,k} \rangle^2 + CK^{-2\gamma/d} \label{eq:Proofs:GConvergence:WellPosed:limsup:Bound}
\end{align}
where we used Remark \ref{rem:Proofs:Back:EvalTails:eigenvalueBounds} for \eqref{eq:Proofs:GConvergence:WellPosed:limsup:Remark}, Proposition \ref{prop:Proofs:Back:Weyl:Law} for \eqref{eq:Proofs:GConvergence:WellPosed:limsup:Weyl} and Lemma \ref{lem:Proofs:GConvergence:WellPosed:Bound} for \eqref{eq:Proofs:GConvergence:WellPosed:limsup:Bound}. In the proof of Theorem \ref{thm:Proofs:GConvergence:withoutConstraints}, it is proven that $\lambda_{n,k}^s \langle v_n, \psi_{n,k} \rangle^2 \to \lambda_k^s \langle v, \psi_{k} \rangle^2$ since $(\mu_n,v_n) \to (\mu,v)$ in $\TLp{2}(\Omega)$. Inserting the latter in \eqref{eq:Proofs:GConvergence:WellPosed:limsup:Bound}, we obtain:
\begin{align}
    \limsup_{n \to \infty} \EnergySnepsn(v_n) &\leq \sum_{k=1}^K \lambda_k^s \langle v, \psi_k \rangle^2 + CK^{-2\gamma/d} \notag \\
    &\leq \EnergyS(v) + CK^{-2\gamma/d}. \label{eq:Proofs:GConvergence:WellPosed:limsup:limitn}
\end{align}
Finally, taking the limit as $K$ tends to $\infty$ in \eqref{eq:Proofs:GConvergence:WellPosed:limsup:limitn} yields 
\[
\limsup_{n \to \infty} \EnergySnepsn(v_n) \leq \EnergyS(v)
\]
which proves \eqref{eq:Proofs:GConvergence:WellPosed:limsup}. Using \cite[Remark 2.7]{Trillos3}, we extend this result to the whole space $S$ which concludes the proof.
\end{proof}

\subsubsection{Ill-Posed Case} 

We start with the liminf inequality.

\begin{proposition} \label{prop:Proofs:GConvergence:IllPosed:liminf}
$\liminf$-inequality in the ill-posed case. 
Assume Assumptions~\ref{ass:Main:Ass:S1}, \ref{ass:Main:Ass:M1}, \ref{ass:Main:Ass:M2}, \ref{ass:Main:Ass:D1}, \ref{ass:Main:Ass:W1}, \ref{ass:Main:Ass:W2} and~\ref{ass:Main:Ass:L1} hold, $\eps_n$ satisfies~\eqref{eq:Main:Ass:epsLBIllPosed}.
Then, $\bbP$-a.s., we have 
\[
    \liminf_{n \to \infty} \cF_{n,\eps_n}((\nu_n,v_n)) \geq \cG(\nu,v)
\]
for any $(\nu,v) \in \TLp{2}(\Omega)$ and $\{(\nu_n,v_n)\}_{n=1}^\infty \subseteq \TLp{2}(\Omega)$ such that $(\nu_n, v_n) \to (\nu, v)$ in $\TLp{2}$. 
\end{proposition}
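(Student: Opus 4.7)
The plan is straightforward once we observe that the target functional $\cG$ discards the pointwise label constraints present in $\cF_{n,\eps_n}$, so the liminf inequality reduces almost directly to the unconstrained $\Gamma$-convergence of Theorem~\ref{thm:Proofs:GConvergence:withoutConstraints}. Throughout I would fix a realisation of $\{x_i\}_{i=1}^\infty$ on which the conclusions of Theorem~\ref{thm:Proofs:GConvergence:withoutConstraints} and the weak convergence $\mu_n \weakstarto \mu$ both hold; this is a $\bbP$-almost sure event.

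First I would dispose of the trivial case: if $\liminf_{n\to\infty}\cF_{n,\eps_n}((\nu_n,v_n)) = +\infty$ there is nothing to prove. Otherwise, extract a subsequence $\{n_k\}_{k=1}^\infty$ along which $\cF_{n_k,\eps_{n_k}}((\nu_{n_k},v_{n_k}))$ converges to the (finite) liminf. By the definition~\eqref{eq:Main:Not:Fn}, finiteness forces $\nu_{n_k} = \mu_{n_k}$ and $v_{n_k}(x_i) = \ell_i$ for all $i \leq N$, so that $\cF_{n_k,\eps_{n_k}}((\nu_{n_k},v_{n_k})) = \cE^{(s)}_{n_k,\eps_{n_k}}(v_{n_k})$.

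Next I would identify the limit measure. Since $(\nu_{n_k},v_{n_k}) \to (\nu,v)$ in $\TLp{2}$, Proposition~\ref{prop:Back:TLp} gives that $\nu_{n_k} = \mu_{n_k}$ converges weakly to $\nu$; combined with $\mu_n \weakstarto \mu$ almost surely and uniqueness of weak limits, this forces $\nu = \mu$. Invoking Theorem~\ref{thm:Proofs:GConvergence:withoutConstraints} along the chosen subsequence then yields
\[ \liminf_{n\to\infty} \cF_{n,\eps_n}((\nu_n,v_n)) = \lim_{k\to\infty} \cE^{(s)}_{n_k,\eps_{n_k}}(v_{n_k}) \geq \EnergyS(v). \]
In particular $\EnergyS(v) < +\infty$, so $v \in \cH^s(\Omega)$ and therefore $\cG((\mu,v)) = \EnergyS(v)$, completing the proof.

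There is no substantial technical obstacle here, since all the work is concentrated in Theorem~\ref{thm:Proofs:GConvergence:withoutConstraints}. The structural reason the argument is so short is precisely that in the ill-posed regime the pointwise label constraints are discarded in the limit $\cG$, so they do not need to be passed through the limit --- in sharp contrast with Proposition~\ref{prop:Proofs:GConvergence:WellPosed:liminf}, where preserving the constraints in the limit required the $\Lp{\infty}$ compactness from Proposition~\ref{prop:Proofs:Compactness:L2Compactness} to evaluate the limit function at the labelled points.
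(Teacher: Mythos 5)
Your proof is correct and takes essentially the same approach as the paper: dispose of the trivial $+\infty$ case, pass to a subsequence on which $\cF_{n,\eps_n}$ is bounded so that $\nu_n = \mu_n$, identify $\nu = \mu$ via uniqueness of weak limits, and conclude by invoking the unconstrained $\Gamma$-liminf inequality from Theorem~\ref{thm:Proofs:GConvergence:withoutConstraints}. The only (harmless) difference is that you spell out the subsequence extraction and the verification that $v \in \cH^s(\Omega)$ more explicitly than the paper does.
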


\begin{proof}
With probability one, we can assume that the conclusion of Theorem \ref{thm:Proofs:GConvergence:withoutConstraints} holds.

As in the proof of Proposition \ref{prop:Proofs:GConvergence:WellPosed:liminf}, without loss of generality, assume that $\cF_{n,\eps_n}((\nu_n,v_n)) \leq C$ for some $C > 0$. In particular, this implies that $\nu_n = \mu_n$, and by Proposition \ref{prop:Back:TLp}, $\mu_n$ converges weakly to $\nu$. By the uniqueness of weak limits, we must have that $\nu = \mu$. 

We have \[
  C\geq  \liminf_{n \to \infty} \mathcal{F}_{n,\eps_n}( (v_n,\nu_n) ) \geq \liminf_{n \to \infty} \EnergySnepsn(v_n) \geq \EnergyS(v) = \mathcal{G}((\nu,v))
\]
where the last inequality follows from Theorem \ref{thm:Proofs:GConvergence:withoutConstraints}. 
\end{proof}

The $\limsup$ inequality requires two computational Lemmas. 

\begin{lemma} \label{lem:Proofs:GConvergence:IllPosed:diracEnergies}
Energy estimates of dirac deltas. 
Assume Assumptions~\ref{ass:Main:Ass:S1}, \ref{ass:Main:Ass:M1}, \ref{ass:Main:Ass:M2}, \ref{ass:Main:Ass:D1}, \ref{ass:Main:Ass:W1}, \ref{ass:Main:Ass:W2} and~\ref{ass:Main:Ass:L1} hold, $\eps_n$ satisfies~\eqref{eq:Main:Ass:epsLBIllPosed}. Then, there exists $C > 0$ such that, $\bbP$-a.s., for $n$ large enough we have
\begin{equation} \label{eq:Proofs:GConvergence:IllPosed:diracEnergies}
    \EnergySnepsn(\delta_{x_i}) \leq \frac{C}{n \eps_n^{2s}}
\end{equation}
for all $x_i \in \Omega_n$.
\end{lemma}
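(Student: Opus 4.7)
My plan is to expand $\delta_{x_i}$ in the eigenbasis $\{\psi_{n,k}\}_{k=1}^n$ of $\Delta_{n,\eps_n}$ and to dominate the entire spectrum by a single coarse upper bound. The finer eigenvalue/eigenfunction estimates from Lemma~\ref{lem:Proofs:Back:EvalTails:Jn} and Proposition~\ref{prop:Proofs:Back:DisReg:EVecAlt} will not be needed here; in particular, since we are in the ill-posed regime, we only have the weaker lower bound~\eqref{eq:Main:Ass:epsLBIllPosed} at our disposal, but that is enough for the coarse estimates below.

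First I would observe that, interpreting $\delta_{x_i}$ as the indicator $\one_{\{x_i\}}:\Omega_n\to\bbR$, a direct computation gives $\langle \delta_{x_i}, \psi_{n,k}\rangle_{\Lp{2}(\mu_n)} = \psi_{n,k}(x_i)/n$, and since $\|\delta_{x_i}\|_{\Lp{2}(\mu_n)}^2 = 1/n$, Parseval's identity in $\Lp{2}(\mu_n)$ yields the key identity $\sum_{k=1}^n \psi_{n,k}(x_i)^2 = n$.

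Next I would establish the uniform eigenvalue bound $\lambda_{n,k}\leq C\eps_n^{-2}$ for every $k$, $\bbP$-a.s.\ and for $n$ large. By Gershgorin applied to the symmetric positive semidefinite matrix $D_{n,\eps_n}-W_{n,\eps_n}$, it suffices to prove the degree bound $\max_i d_{n,\eps_n,ii}\leq Cn$ $\bbP$-a.s. Assumption~\ref{ass:Main:Ass:W1} implies $w_{\eps_n,ij}$ is nonzero only when $|x_i-x_j|\leq\eps_n$ and is at most $\|\eta\|_{\Lp{\infty}}/\eps_n^d$, so the degree bound reduces to showing that each $\eps_n$-ball contains at most $Cn\eps_n^d$ sample points. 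Using Theorem~\ref{thm:Back:TLp:LinftyMapsRate} together with the lower bound \eqref{eq:Main:Ass:epsLBIllPosed}, we have $\|\Id-T_n\|_{\Lp{\infty}}\ll\eps_n$, so transporting $\mu$-mass through $T_n$ yields $|\{j:|x_i-x_j|\leq\eps_n\}|/n\leq \mu(B(x_i,2\eps_n))\leq C\eps_n^d$ by Assumption~\ref{ass:Main:Ass:M2}. Dividing by the scaling factor $\sigma_\eta n\eps_n^2/2$ then gives the claimed spectral bound.

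Combining these two ingredients with the spectral decomposition~\eqref{eq:Main:Not:Setting:DisEnergyDecomposition}, I would conclude
\[
\EnergySneps(\delta_{x_i}) = \frac{1}{n^2}\sum_{k=1}^n \lambda_{n,k}^s\, \psi_{n,k}(x_i)^2 \leq \frac{C}{n^2\eps_n^{2s}}\sum_{k=1}^n \psi_{n,k}(x_i)^2 = \frac{C}{n\eps_n^{2s}},
\]
which is precisely~\eqref{eq:Proofs:GConvergence:IllPosed:diracEnergies}. The only mildly delicate point is the almost sure maximum-degree bound in the second step; the remainder is routine once the spectral expansion is in hand.
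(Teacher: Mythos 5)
Your proof is correct. The endgame is essentially equivalent to the paper's: writing
$\EnergySnepsn(\delta_{x_i}) = \frac{1}{n^2}\sum_k \lambda_{n,k}^s \psi_{n,k}(x_i)^2 \leq \frac{\lambda_{n,n}^s}{n^2}\sum_k \psi_{n,k}(x_i)^2 = \frac{\lambda_{n,n}^s}{n}$
via Parseval is the same bound the paper obtains by normalizing $\sqrt{n}\,\delta_{x_i}$ and invoking the variational characterization of $\lambda_{n,n}$; the eigenbasis expansion and the Courant--Fischer $\sup$ are two phrasings of one inequality. The genuine difference is in how the bound $\lambda_{n,n} \leq C\eps_n^{-2}$ is obtained: the paper cites \cite[Lemma 22]{Stuart} as a black box (after using Theorem~\ref{thm:Back:TLp:LinftyMapsRate} and~\eqref{eq:Main:Ass:epsLBIllPosed} to verify its hypotheses), whereas you re-derive it from scratch via Gershgorin applied to $D_{n,\eps_n}-W_{n,\eps_n}$ and an a.s.\ maximum-degree bound $\max_i d_{n,\eps_n,ii}\lesssim n$, itself proved by pushing $\mu$-mass through the transport map and using $\|\Id-T_n\|_{\Lp{\infty}}\ll\eps_n$. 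Your degree bound is correct: $\eta$ is bounded and compactly supported under~\ref{ass:Main:Ass:W1}, $\mu_n(B(x_i,\eps_n)) = \mu(T_n^{-1}(B(x_i,\eps_n))) \leq \mu(B(x_i,2\eps_n)) \leq C\eps_n^d$ once $\|\Id-T_n\|_{\Lp{\infty}}\leq\eps_n$, which~\eqref{eq:Main:Ass:epsLBIllPosed} guarantees for $n$ large. What your route buys is a self-contained elementary argument in place of the external reference; what the paper's route buys is brevity by reusing an existing spectral bound. Both rely on the same geometric input (the $\Lp{\infty}$ transport rate and the lower bound on $\eps_n$), and both correctly avoid the finer eigenvalue/eigenfunction machinery that is only available in the well-posed regime.
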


\begin{proof}
With probability one, we can assume that the conclusion of Theorem \ref{thm:Back:TLp:LinftyMapsRate} holds.

By Assumption \ref{eq:Main:Ass:epsLBIllPosed} and Theorem \ref{thm:Back:TLp:LinftyMapsRate}, we can apply \cite[Lemma 22]{Stuart} for $n$ large enough. We recall the variational definition of eigenvalues:
\[
    \lambda_{n,n}^s = \sup_{\Vert u \Vert_{\Lp{2}} = 1,\: u \in \mathbb{R}^n} \langle u, \Delta_{n,\eps_n}^s u \rangle_{\Lp{2}(\mu_n)}.
\]
Furthermore, for $x_i \in \Omega_n$, 
\[
    \Vert \sqrt{n}\delta_{x_i} \Vert_{\Lp{2}} = \sqrt{\frac{1}{n} \sum_{j=1}^n n \delta_{x_i}(x_j)^2} = 1
\]
so that we can estimate: 
\begin{align}
    \EnergySnepsn(\delta_{x_i}) &= \langle \delta_{x_i}, \Delta_{n,\eps_n}^s \delta_{x_i} \rangle_{\Lp{2}(\mu_n)} \notag\\
    &= \frac{1}{n} \langle \sqrt{n} \delta_{x_i} , \Delta_{n,\eps_n}^s \sqrt{n}\delta_{x_i} \rangle_{\Lp{2}(\mu_n)} \notag\\
    &\leq \frac{1}{n}\sup_{\Vert u \Vert_{\Lp{2}} = 1,\: u \in \mathbb{R}^n} \langle u, \Delta_{n,\eps_n}^s u \rangle_{\Lp{2}(\mu_n)} \notag\\
    &= \frac{\lambda_{n,n}^s}{n} \notag\\
    &\leq \frac{C}{n\eps_n^{2s}}\label{eq:Proofs:GConvergence:IllPosed:dirac:eigenvalueBound}
\end{align}
where we used \cite[Lemma 22]{Stuart} for \eqref{eq:Proofs:GConvergence:IllPosed:dirac:eigenvalueBound}.
\end{proof}

\begin{proposition} \label{prop:Proofs:GConvergence:IllPosed:limsup}
$\limsup$-inequality in the ill-posed case. 
Assume Assumptions~\ref{ass:Main:Ass:S1}, \ref{ass:Main:Ass:M1}, \ref{ass:Main:Ass:M2}, \ref{ass:Main:Ass:D1}, \ref{ass:Main:Ass:W1}, \ref{ass:Main:Ass:W2} and~\ref{ass:Main:Ass:L1} hold, $\eps_n$ satisfies~\eqref{eq:Main:Ass:epsLBIllPosed}.
Assume that $n\eps_n^{2s} \to \infty$ and $\rho \in \Ck{\infty}$. Let $(\nu,v) \in \TLp{2}(\Omega)$. Then, $\bbP$-a.s., there exists $\{(\nu_n,v_n)\} \subseteq \TLp{2}(\Omega)$ such that $(\nu_n,v_n) \to (\nu,v)$ in $\TLp{2}$ and
\begin{equation}\label{eq:Proofs:GConvergence:IllPosed:limsup}
    \limsup_{n\to \infty} \cF_{n,\eps_n}((\nu_n,v_n)) \leq \cG((\nu,v)).
\end{equation}
\end{proposition}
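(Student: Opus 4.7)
The plan is to build a recovery sequence by taking restrictions of smooth approximations to $v$ and then patching the values at the $N$ label points using Dirac masses whose energies vanish in the ill-posed regime $n\eps_n^{2s}\to\infty$.

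First I would dispose of the trivial case $\cG((\nu,v)) = \infty$, reducing to $\nu=\mu$ and $v\in\cH^s(\Omega)$ with $\EnergyS(v) < \infty$. Expanding $v = \sum_{k=1}^\infty c_k \psi_k$ in the continuum eigenbasis, and using that $\rho \in \Ck{\infty}$ forces each $\psi_k \in \Ck{\infty}(\Omega)$ by elliptic regularity, the truncations $v^{(K)} = \sum_{k=1}^K c_k \psi_k$ lie in $\Ck{\infty}(\Omega)$, converge to $v$ in $\Lp{2}(\mu)$, and satisfy $\EnergyS(v^{(K)}) \nearrow \EnergyS(v)$.

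For each $K \in \bbN$ and $n$, define $v_n^{(K)}:\Omega_n\to\bbR$ by $v_n^{(K)}(x_i) = \ell_i$ for $i \leq N$ and $v_n^{(K)}(x_j) = v^{(K)}(x_j)$ for $j > N$; equivalently
\[
v_n^{(K)} = v^{(K)}\lfloor_{\Omega_n} + \sum_{i=1}^N \l\ell_i - v^{(K)}(x_i)\r\delta_{x_i}.
\]
By construction $\cF_{n,\eps_n}((\mu_n, v_n^{(K)})) = \EnergySnepsn(v_n^{(K)})$. The $\TLp{2}$-convergence $(\mu_n, v_n^{(K)}) \to (\mu, v^{(K)})$ would be established using the transport maps of Theorem \ref{thm:Back:TLp:LinftyMapsRate} and uniform continuity of $v^{(K)}$ for the restriction piece, together with the observation that the Dirac correction contributes at most $\tfrac{1}{n}\sum_{i=1}^N (\ell_i - v^{(K)}(x_i))^2 \to 0$ in $\Lp{2}(\mu_n)$.

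For the energy, Lemma \ref{lem:Proofs:GConvergence:IllPosed:Minkowski} yields
\[
\sqrt{\EnergySnepsn(v_n^{(K)})} \leq \sqrt{\EnergySnepsn(v^{(K)}\lfloor_{\Omega_n})} + \sum_{i=1}^N |\ell_i - v^{(K)}(x_i)| \sqrt{\EnergySnepsn(\delta_{x_i})}.
\]
The last term vanishes by Lemma \ref{lem:Proofs:GConvergence:IllPosed:diracEnergies} combined with $n\eps_n^{2s}\to\infty$ and the uniform boundedness of the smooth $v^{(K)}$. For the first term, I would invoke the limsup direction of Theorem \ref{thm:Proofs:GConvergence:withoutConstraints}, with the standard choice of recovery sequence for smooth targets being their restrictions, to obtain $\limsup_n \EnergySnepsn(v^{(K)}\lfloor_{\Omega_n}) \leq \EnergyS(v^{(K)})$. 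A diagonal argument in the spirit of \cite[Remark 2.7]{Trillos3} then extracts a sequence $v_n = v_n^{(K(n))}$ with $K(n)\to\infty$ slowly enough that both $(\mu_n, v_n)\to(\mu,v)$ in $\TLp{2}$ and $\limsup_n \EnergySnepsn(v_n) \leq \EnergyS(v)$ hold.

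The main obstacle is the limsup-bound on $\EnergySnepsn(v^{(K)}\lfloor_{\Omega_n})$ for smooth $v^{(K)}$ under the weaker ill-posed scaling \eqref{eq:Main:Ass:epsLBIllPosed}: the argument used in Proposition \ref{prop:Proofs:GConvergence:WellPosed:limsup} relied on Lemma \ref{lem:Proofs:GConvergence:WellPosed:Bound}, which requires \eqref{eq:Main:Ass:epsLBWellPosed}. However, this pointwise-energy-convergence statement for smooth test functions is available from the unconstrained $\Gamma$-convergence result in \cite{Stuart} under only \eqref{eq:Main:Ass:epsLBIllPosed}, and is precisely what makes the Dirac correction strategy succeed here; the remaining verifications are routine given Lemmas \ref{lem:Proofs:GConvergence:IllPosed:Minkowski} and \ref{lem:Proofs:GConvergence:IllPosed:diracEnergies}.
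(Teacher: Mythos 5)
Your proposal follows essentially the same route as the paper: restrict a smooth approximant of $v$ to $\Omega_n$, overwrite the $N$ labelled vertices with $\ell_i$ (equivalently add Dirac corrections), split the energy via Lemma~\ref{lem:Proofs:GConvergence:IllPosed:Minkowski}, kill the Dirac terms with Lemma~\ref{lem:Proofs:GConvergence:IllPosed:diracEnergies} together with $n\eps_n^{2s}\to\infty$, and extend by density. The only cosmetic difference is that you approximate $v\in\cH^s$ by spectral truncations $v^{(K)}$ and then diagonalize, while the paper simply invokes density of $\Ck{\infty}$ in $\cH^s$ and extends via \cite[Remark~2.7]{Trillos3}; these are interchangeable.

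Your self-flagged concern about the $\limsup$-bound on $\EnergySnepsn(v^{(K)}\lfloor_{\Omega_n})$ under only \eqref{eq:Main:Ass:epsLBIllPosed} is legitimate and worth spelling out: the paper's own proof just says ``repeating the proof of Proposition~\ref{prop:Proofs:GConvergence:WellPosed:limsup}, we can show\dots $\limsup_{n \to \infty} \EnergySnepsn(\bar{v}_n) \leq \EnergyS(v)$,'' but that proof goes through Lemma~\ref{lem:Proofs:GConvergence:WellPosed:Bound}, whose statement assumes \eqref{eq:Main:Ass:epsLBWellPosed}. So the paper glosses over exactly the point you raise. Your resolution --- appealing to the unconstrained $\Gamma$-convergence from \cite{Stuart} (i.e.\ Theorem~\ref{thm:Proofs:GConvergence:withoutConstraints}), which holds under the weaker ill-posed scaling --- is the right instinct, but note that the $\Gamma$-$\limsup$ only asserts existence of \emph{some} recovery sequence; to splice on the Dirac masses you need the recovery sequence for smooth targets to actually be the restriction. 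This is true for the energies of \cite{Stuart} and is what the paper implicitly uses, but it should be stated rather than taken as ``standard.'' Everything else in your proposal is sound.
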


\begin{proof}
In the proof $C>0$ will denote a constant that can be arbitrarily large, independent of $n$ that may change from line to line. With probability one, we can assume that the conclusions of Theorem \ref{thm:Back:TLp:LinftyMapsRate} and Lemma \ref{lem:Proofs:GConvergence:IllPosed:diracEnergies} hold.

We note that \eqref{eq:Proofs:GConvergence:IllPosed:limsup} is trivial if $\mathcal{G}((\nu,v)) = \infty$. Hence, we might assume that $\mathcal{G}((\nu,v)) < \infty$ and in particular, this implies that $\nu = \mu$ and $\mathcal{G}((\nu,v)) = \EnergyS(v)$. We therefore need to prove \eqref{eq:Proofs:GConvergence:IllPosed:limsup} on $\{\mu\}\times\mathcal{H}^s(\Omega) \subseteq \TLp{2}$. We start by assuming that $v \in \Ck{\infty}$, which is dense in $\mathcal{H}^s(\Omega)$. Let $\bar{v}_n$ be the restriction of $v$ to $\Omega_n$ and consider the recovery sequence $\{(\mu_n,\bar{v}_n)\}_{i=1}^\infty \subseteq \TLp{2}(\Omega)$. By Theorem \ref{thm:Back:TLp:LinftyMapsRate}, we get transport maps $\{T_n\}_{n=1}^\infty$ from $\mu$ to $\mu_n$. Repeating the proof of Proposition \ref{prop:Proofs:GConvergence:WellPosed:limsup}, we can show that $(\mu_n,\bar{v}_n) \to (\mu,v)$ in $\TLp{2}$ and that $\limsup_{n \to \infty} \EnergySnepsn(\bar{v}_n) \leq \EnergyS(v)$.

We define the functions \[
    v_n(x_i) = \begin{cases}
    \bar{v}_n(x_i) &\text{if $i \geq N + 1$,}\\
    \ell_i & \text{if $i \leq N$.}
    \end{cases}
\]
and estimate as follows: 
\[
    \int_\Omega \vert v_n \circ T_n - v \vert^2 \,\dd\mu \leq 2 \int_\Omega \vert v_n \circ T_n - v \circ T_n \vert^2 \,\dd\mu + 2 \int_\Omega \vert v \circ T_n - v \vert^2 \,\dd\mu =: 2(A + B).
\]
As in the proof of Proposition \ref{prop:Proofs:GConvergence:WellPosed:liminf}, we see that $B \to 0$. For the $A$ term, we have
\begin{align}
    A &\leq \int_{\{x \spaceBar T_n(x) \neq x_i \text{ for $i \leq N$}\}} \vert v_n \circ T_n - v \circ T_n \vert^2 \, \dd\mu + \sum_{i=1}^N \int_{\{x \spaceBar T_n(x) = x_i\}} \vert v_n \circ T_n - v \circ T_n \vert^2 \, \dd\mu \notag\\
    &=\sum_{i=1}^N \vert v(x_i) - \ell_i \vert^2 \mu(\{x \spaceBar T_n(x) = x_i\}) \notag\\
    &= \sum_{i=1}^N \vert v(x_i) - \ell_i \vert^2 \mu_n(\{x_i\}) \notag\\
    &= \sum_{i=1}^N \frac{\vert v(x_i) - \ell_i \vert^2}{n} \notag
\end{align}
from which we deduce that $(\mu_n,v_n) \to (\mu,v)$ in $\TLp{2}$.

Let $n$ be large enough so that \eqref{eq:Proofs:GConvergence:IllPosed:diracEnergies} holds. We now show that 
\[
\limsup_{n \to \infty} \mathcal{F}_{n,\eps_n}(v_n,\mu_n) = \limsup_{n \to \infty} \EnergySnepsn(v_n) \leq \EnergyS(v).
\]
To this purpose, we note that 
\[
v_n = \bar{v}_n + \sum_{i = 1}^N \delta_{x_i} (\ell_i - \bar{v}_n(x_i))
\]
and applying Lemma \ref{lem:Proofs:GConvergence:IllPosed:Minkowski}, we obtain:
\begin{align}
    \sqrt{\EnergySneps{}(v_n)} &\leq \sqrt{\EnergySnepsn(\bar{v}_n)} + \sum_{i=1}^N \vert \ell_i - \bar{v}_n(x_i) \vert \sqrt{\EnergySnepsn(\delta_{x_i})} \notag\\
    &\leq \sqrt{\EnergySnepsn(\bar{v}_n)} + C \sum_{i=1}^N \sqrt{\EnergySnepsn(\delta_{x_i})} \label{eq:Proofs:GConvergence:IllPosed:limsup:linfty}\\
    &\leq \sqrt{\EnergySnepsn(\bar{v}_n)} + C \l \frac{1}{n\eps_n^{2s}} \r^{1/2} \label{eq:Proofs:GConvergence:IllPosed:limsup:energy}
\end{align}
where we used the fact that $\Vert v \Vert_{\Lp{\infty}}$ is bounded on $\Omega$ for \eqref{eq:Proofs:GConvergence:IllPosed:limsup:linfty} (and $\bar{v}_n$ is the restriction of $v$ to $\Omega_n$ and so is also bounded in $\Lp{\infty}$) and \eqref{eq:Proofs:GConvergence:IllPosed:diracEnergies} for \eqref{eq:Proofs:GConvergence:IllPosed:limsup:energy}. Taking $n \to \infty$ on the latter right hand side, we have 
\[
\limsup_{n \to \infty} \sqrt{\EnergySneps(v_n)} \leq \limsup_{n \to \infty} \sqrt{\EnergySnepsn(\bar{v}_n)} + \limsup_{n \to \infty} C \l \frac{1}{n\eps_n^{2s}} \r^{1/2} \leq \sqrt{\EnergyS(v)}
\]
since by assumption $n\eps_n^{2s} \to \infty$ and $\limsup_{n \to \infty} \EnergySnepsn(\bar{v}_n) \leq \EnergyS(v)$. 

Having shown \eqref{eq:Proofs:GConvergence:IllPosed:limsup} on $\{\mu\}\times \Ck{\infty}$, we now use the fact that it is sufficient to establish the existence of a recovery sequence on a dense subset \cite[Remark 2.7]{Trillos3}. 
\end{proof}

\subsection{Bounds on Minimizers} \label{subsec:Proofs:Linfty}

\begin{lemma} \label{lem:Proofs:Bounds:UniformBounds}
Uniform bound of energies for minimizers.
Assume Assumptions~\ref{ass:Main:Ass:S1}, \ref{ass:Main:Ass:M1}, \ref{ass:Main:Ass:M2}, \ref{ass:Main:Ass:D1}, \ref{ass:Main:Ass:W1}, \ref{ass:Main:Ass:W2} and~\ref{ass:Main:Ass:L1} hold. Assume $\rho \in \Ck{\infty}$. Let $\{(\mu_n,u_n)\}_{n=1}^\infty$ be the minimizers of $\cF_{n,\eps_n}(\cdot)$. Then, there exists $C >0$ such that, $\bbP$-a.s., we have
\[
    \sup_{n} \cF_{n,\eps_n}( (\mu_n,u_n) ) \leq C.
\]
\end{lemma}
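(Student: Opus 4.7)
The strategy is to exploit the minimality of $u_n$ subject to the pointwise constraints by comparing $\cF_{n,\eps_n}((\mu_n, u_n))$ to the energy of a fixed, smooth feasible competitor. Concretely, since $N$ is fixed and the labelled points $\{x_i\}_{i=1}^N$ are distinct, a standard partition-of-unity argument on the compact manifold $\Omega$ produces a function $v \in \Ck{\infty}(\Omega)$ with $v(x_i) = \ell_i$ for all $i \leq N$. Then $v\lfloor_{\Omega_n}$ automatically satisfies the discrete constraints, so minimality of $u_n$ yields
\[ \cF_{n,\eps_n}((\mu_n, u_n)) \leq \cF_{n,\eps_n}((\mu_n, v\lfloor_{\Omega_n})) = \EnergySnepsn(v\lfloor_{\Omega_n}). \]
It then suffices to bound the right-hand side uniformly in $n$.

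Write $v_n = v\lfloor_{\Omega_n}$, and fix an even integer $2k$ with $2k \geq s$. Using the spectral decomposition \eqref{eq:Main:Not:Setting:DisEnergyDecomposition} and Hölder's inequality with exponents $2k/s$ and $2k/(2k-s)$ applied to the factorisation $\lambda_{n,\eps_n,i}^s \langle v_n,\psi_{n,\eps_n,i}\rangle^2 = (\lambda_{n,\eps_n,i}^{2k}\langle v_n,\psi_{n,\eps_n,i}\rangle^2)^{s/(2k)} (\langle v_n,\psi_{n,\eps_n,i}\rangle^2)^{1-s/(2k)}$, we obtain the interpolation estimate
\[
\EnergySnepsn(v_n) \leq \bigl(\cE_{n,\eps_n}^{(2k)}(v_n)\bigr)^{s/(2k)} \Vert v_n \Vert_{\Lp{2}(\mu_n)}^{2-s/k}.
\]
The second factor is trivially dominated by $\Vert v \Vert_{\Lp{\infty}}^{2-s/k}$, which is finite because $v$ is smooth on the compact domain $\Omega$. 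For the first factor, Lemma~\ref{lem:Proofs:GConvergence:WellPosed:Bound} supplies $\sup_n \cE_{n,\eps_n}^{(2k)}(v_n) \leq C(k,v)$, and combining the two gives the uniform bound.

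The main technical obstacle is that Lemma~\ref{lem:Proofs:GConvergence:WellPosed:Bound} is stated under the stronger length-scale hypothesis \eqref{eq:Main:Ass:epsLBWellPosed}, which is not explicitly listed here. I would expect this to be an implicit running hypothesis, since Lemma~\ref{lem:Proofs:Bounds:UniformBounds} feeds into the well-posed compactness argument of Proposition~\ref{prop:Proofs:Compactness:L2Compactness} where \eqref{eq:Main:Ass:epsLBWellPosed} is in force. If instead one is in the ill-posed regime, the alternative competitor $v_n = c + \sum_{i=1}^N (\ell_i - c)\delta_{x_i}$ (for any constant $c$) combined with Lemma~\ref{lem:Proofs:GConvergence:IllPosed:Minkowski} and Lemma~\ref{lem:Proofs:GConvergence:IllPosed:diracEnergies} gives $\sqrt{\EnergySnepsn(v_n)} \leq C\sum_{i=1}^N |\ell_i - c| (n\eps_n^{2s})^{-1/2}$, which is bounded (indeed vanishing) whenever $n\eps_n^{2s}\to\infty$.
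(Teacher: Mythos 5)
Your proposal is correct, and you correctly diagnose the one subtlety: the paper's lemma statement omits the well-posed scaling hypothesis \eqref{eq:Main:Ass:epsLBWellPosed}, which is in fact needed (the paper's own proof invokes Proposition~\ref{prop:Proofs:GConvergence:WellPosed:limsup}, which carries that hypothesis) and which is indeed an implicit running assumption in the regime where the lemma is used.

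Your route is genuinely different from the paper's and, for this particular lemma, arguably cleaner. The paper's proof compares $u_n$ against the same smooth interpolant $v$ but then bounds $\sup_n\EnergySnepsn(v_n)$ indirectly by invoking the full recovery-sequence construction of Proposition~\ref{prop:Proofs:GConvergence:WellPosed:limsup} (specifically $\limsup_n\EnergySnepsn(v_n)\leq\EnergyS(v)<\infty$, upgraded to a $\sup$ by bounding the finitely many initial terms separately). Internally, the paper's limsup proof handles the non-integer exponent $s$ by splitting the spectral sum at a cutoff $K$, absorbing the tail via the factor $\lambda_{n,K}^{-\gamma}$ with $\gamma$ chosen so that $s+\gamma$ is an even integer, and then applying Weyl's law, Remark~\ref{rem:Proofs:Back:EvalTails:eigenvalueBounds}, and Lemma~\ref{lem:Proofs:GConvergence:WellPosed:Bound}. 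Your H\"older interpolation
\[
\EnergySnepsn(v_n)\leq\bigl(\cE_{n,\eps_n}^{(2k)}(v_n)\bigr)^{s/(2k)}\Vert v_n\Vert_{\Lp{2}(\mu_n)}^{2-s/k}
\]
achieves the same reduction to the even-integer case in a single line, without needing Weyl's law, the cutoff $K_n$, or the eigenvalue comparison estimates. Both arguments bottom out in Lemma~\ref{lem:Proofs:GConvergence:WellPosed:Bound}, so the dependence chain and the implicit use of \eqref{eq:Main:Ass:epsLBWellPosed} are the same. Your observation about the ill-posed alternative via Dirac competitors mirrors the structure of Proposition~\ref{prop:Proofs:GConvergence:IllPosed:limsup} and is accurate, though it is not how the paper handles this lemma (which is only used in the well-posed branch of the main theorem).
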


\begin{proof}
In the proof $C>0$ will denote a constant that can be arbitrarily large, independent of $n$ that may change from line to line.
With probability one, we can assume that the conclusion of Proposition~\ref{prop:Proofs:GConvergence:WellPosed:limsup} holds.

Let $v \in C^\infty(\Omega)$ be a function that interpolates all the points $\{(x_i,\ell_i)\}_{i=1}^N$. Since $\rho \in C^\infty$, we have that $\Delta^s_\rho v \in C^\infty$ implying that $v \Delta^s_\rho v \in C^\infty$. 
We have
\[
    \EnergyS(v) = \int_\Omega v \Delta^s_\rho v \,\dd\mu < K.
\]
for some $K > 0$. 

By Proposition \ref{prop:Proofs:GConvergence:WellPosed:limsup} there exists a sequence $v_n$ converging to $v$ and such that 
\[
    \lim_{n \to \infty} h_n := \lim_{n \to \infty} \sup_{m \geq n } \mathcal{E}^{(s)}_{m,\eps_m}(v_m) =  \limsup_{n \to \infty} \EnergySnepsn(v_n) = \limsup_{n \to \infty} \mathcal{F}_{n,\eps_n}((\mu_n,v_n)) \leq \EnergyS(v) < K.
\]
Let $h := \limsup_{n \to \infty} \EnergySnepsn(v_n)$ and let $\bar{\eps} = K - h > 0$. Then, there exists $n_0$ such that for all $n \geq n_0$, $\vert h_n - h \vert < \bar{\eps}/2$, which is equivalent to $h_n = \sup_{m \geq n} \mathcal{E}^{(s)}_{m,\eps_m}(v_m) < h + \bar{\eps}/2 < K$. Using the latter, we have 
\[
    \sup_{n} \EnergySnepsn(v_n) = \max\{\mathcal{E}_{1,\eps_1}^{(s)}(v_1),\dots,\mathcal{E}_{n_0,\eps_{n_0}}^{(s)}(v_{n_0}),\sup_{n \geq n_0} \EnergySnepsn(v_n)\} 
    \leq C.
\]
Since $\{u_n\}_{n=1}^\infty$ are minimizers, we have $\EnergySnepsn(u_n) \leq \EnergySnepsn(v_n)$ which implies 
\[
   \sup_n \mathcal{F}_{n,\eps_n}((\mu_n,u_n)) =  \sup_n \EnergySnepsn(u_n) \leq \sup_n \EnergySnepsn(v_n) \leq C. \qedhere
\]
\end{proof}

\subsection{Proof of Theorem~\ref{thm:Main:Res:ConsFracLap}}

\begin{proof}[Proof of Theorem \ref{thm:Main:Res:ConsFracLap}]
In the proof $C>0$ will denote a constant that can be arbitrarily large, independent of $n$ that may change from line to line.
\paragraph{Well-Posed Case.}
With probability one, we can assume that the conclusions of Lemma \ref{lem:Proofs:Bounds:UniformBounds}, Proposition \ref{prop:Proofs:Bounds:Poincaré}, Theorem \ref{thm:Proofs:GConvergence:withoutConstraints}, Proposition \ref{prop:Proofs:Compactness:L2Compactness} and
Proposition \ref{prop:Proofs:GConvergence:WellPosed:liminf} hold. 
Using Lemma \ref{lem:Proofs:Bounds:UniformBounds} and Proposition \ref{prop:Proofs:Bounds:Poincaré}, we have
\[ \Vert u_n \Vert_{\addmaths{\Lp{\infty}}{\removemaths{\Lp{2}}}} \leq \Vert u_n - \frac{1}{N} \sum_{i=1}^N u_n(x_i) \Vert_{\addmaths{\Lp{\infty}}{\removemaths{\Lp{2}}}} + \Vert \frac{1}{N} \sum_{i=1}^N u_n(x_i) \Vert_{\addmaths{\Lp{\infty}}\removemaths{\Lp{2}}} \leq C \sqrt{\EnergySnepsn(u_n)} + \frac{1}{N} \sum_{i=1}^N \vert\ell_i\vert \leq C. \]
Hence minimisers are bounded in $\TLp{2}$ and $\max\l\sup_{n} \Vert u_{n} \Vert_{\Lp{\infty}},\sup_{n} \mathcal{E}^{(s)}_{n}(u_{n}\r \leq C$.
By Proposition~\ref{prop:Proofs:Compactness:L2Compactness} there exists $u$ and a subsequence converging uniformly and in $\TLp{2}$.
By Propositions~\ref{prop:Proofs:GConvergence:WellPosed:liminf} and \ref{prop:Proofs:GConvergence:WellPosed:limsup} it follows that $(\mu,u)$ is a minimiser of $\cF$.
By uniqueness of the minimiser of $\cF$ it follows that the whole sequence $\{( \mu_n,u_n)\}$ converges in $\TLp{2}$ to $(\mu,u)$.

\paragraph{Ill-Posed Case.}
With probability one, we can assume that the conclusions of Theorem \ref{thm:Proofs:GConvergence:withoutConstraints}, Proposition \ref{prop:Proofs:GConvergence:IllPosed:liminf} and Proposition \ref{prop:Proofs:GConvergence:IllPosed:limsup}  hold.

By Proposition \ref{prop:Proofs:GConvergence:IllPosed:liminf} and Proposition \ref{prop:Proofs:GConvergence:IllPosed:limsup}, we know that $\mathcal{F}_{n,\eps_n}$ $\Gamma$-converges to $\mathcal{G}(\cdot)$.
By Theorem \ref{thm:Proofs:GConvergence:withoutConstraints} $\mathcal{F}_{n,\eps_n}(\cdot)$ satisfies the compactness property.
Hence, by Proposition~\ref{prop:Back:Gamma:minimizers} we can conclude the result.    
\end{proof}

\section{Numerical Experiments} \label{sec:NumExp}

\begin{figure}
\centering
\includegraphics[width = \textwidth]{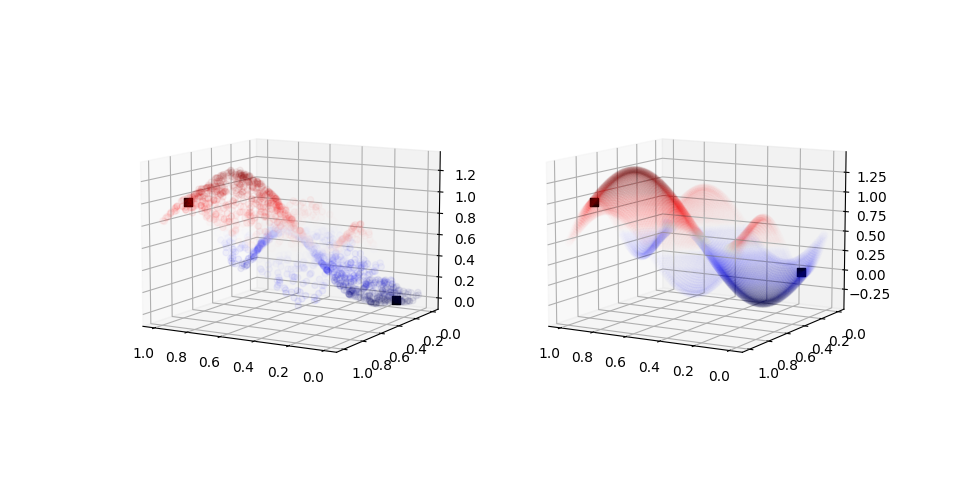}
\caption{Plots of the discrete and continuum minimizers with the setting described in Section \ref{subsec:criticalBoundary}. The values at the points $(0.1,0.1)$ and $(0.9,0.9)$ are marked with black squares. 
\textit{Left}: Discrete minimizer computed with $n = 1733$ points.
\textit{Right}: Continuum minimizer.
}
\end{figure}

\subsection{Critical boundary between well-posed and ill-posed regimes} \label{subsec:criticalBoundary}

In this section, we will investigate the gap in the upper bound alluded to in Remark \ref{rem:Main:Res:Relationship}. In particular, we will rely on the same methodology as in \cite{Slepcev}.

To test the gap between the well-posed and ill-posed regime, i.e. $n^{-\frac{2}{s-1}}\lesssim \eps_n\lesssim n^{-\frac{1}{2s}}$ (by Corollary~\ref{cor:Main:Res:ConsFracLap} we know that $\eps_n\lesssim n^{-\frac{2}{s-1}}$ implies asymptotic well-posedness and $\eps_n\gg n^{-\frac{1}{2s}}$ implies asymptotic ill-posedness)
we will consider the following setting: we choose the uniform measure on $[0,1]^2$ with periodic boundary conditions, the kernel function $\eta(t) = 1$ if $t\leq 1$ and $\eta(t) = 0$ else and $s = 16$. This choice of $s$ satisfies the constraint $s > 2d + 9 = 13$ from Remark \ref{rem:Main:Res:LowerBoundS}. The training set consists of the points $(0.1,0.1)$ and $(0.9,0.9)$ labelled $0$ and $1$ respectively. 

We will vary the number of points in our graph from $n = 100$ to $n = 5000$. For each $n$, we will also consider a wide range of $\eps_n$ ranging from roughly the connectivity radius to above $n^{-1/2s}$. For each combination of $(n,\eps_n)$, we then compute the discrete minimizer $u_n$ using Lagrange multipliers and compare it to the continuum solution $u$. The continuum solution is computed using a finite-difference scheme on a regular grid of 10000 points on $[0,1]^2$.
The error considered is 
\begin{equation} \label{eq:numExp:error}
\text{err}(n,\eps_n,u_n) = \Vert u_n - u \Vert_{\Lp{2}(\mu_n)}
\end{equation}
where, in order to evalutate $u$ on the graph, we use spline interpolation.
Finally, by re-sampling the points for each $n$, we repeat the experiments one hundred times and average the error. 

We are interested in finding the value of $\eps_n$ where fractional Laplacian learning switches from the well-posed to the ill-posed regime. In order to compute the latter, we start by smoothing the function $\eps_n \mapsto \text{err}(n,\eps_n,u_n)$ and compute the maximizer of its first derivative and the minimizer of its second derivative which we denote by $\hat{\eps}_n$ and $\eps_n^{*}$ respectively. 
We choose $\hat{\eps}_n$ and $\eps_n^*$ 
greater than the minimizer of $\text{err}(n,\cdot,u_n)$. 
Both $\hat{\eps}_n$ and $\eps_n^*$ could be taken as reasonable definitions of the transition point between the well-posed and ill-posed regime and, it is therefore interesting to understand how they scale with $n$.  Using the five largest values of $n$, the best linear fits between  $\log(\hat{\eps}_n)$, $\log(\eps_n^*)$ and $\log(n)$ yield that
\[
   \hat{\eps}_n \approx \frac{0.6541}{n^{0.05}} \quad \text{and} \quad \eps_n^* \approx \frac{0.7312}{n^{0.06}}.
\]

For $s = 16$, we have $1/(2s) = 0.03125$ and $2/(s-1) \approx 0.134$. Given the top plots in Figure \ref{fig:error}, we observe that both $\hat{\eps}_n$ and $\eps_n^*$ seem to scale accurately with powers that are different to $1/(2s)$. In fact, we note that $0.06 \approx 0.05 \approx 1/s = 0.0625$. On one hand, this indicates that we should be able to extend the well-posed regime of Theorem \ref{thm:Main:Res:ConsFracLap} to 
\[
\l \frac{1}{n} \r^{2/(s-1)} \ll \eps_n \ll \l \frac{1}{n}\r^{1/s}
\]
and, by Remark \ref{rem:Main:Res:LowerBoundS}, we could relax our assumption of $s > 2d + 9$ to $s > d + 4$. If we were able to also tackle the lower bound gap (see Remark \ref{rem:Main:Res:LowerBoundGap}), we could furthermore have $s > d$ in contrast to $s > d/2$ which is conjectured in Remark \ref{rem:Main:Res:Sobolev}.
On the other hand, we are not able to fully confirm the conjecture made in Remark \ref{rem:Main:Res:Relationship} numerically  and it remains an open question to accurately characterize the behaviour of fractional Laplacian semi-supervised learning when 
\[
\l \frac{1}{n} \r^{1/s} \ll \eps_n \ll \l \frac{1}{n}\r^{1/(2s)}.
\]

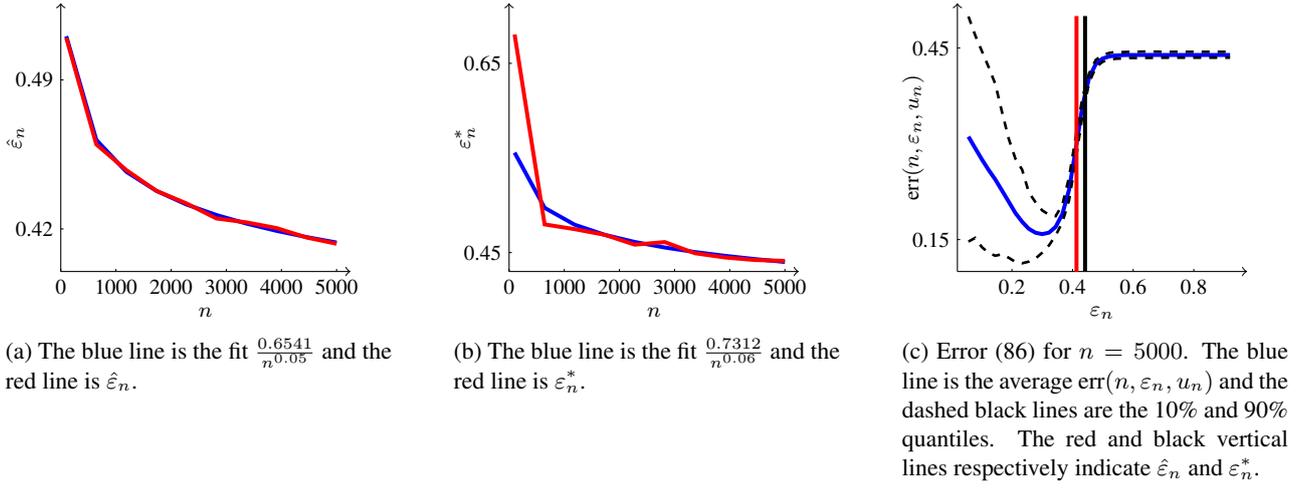
\begin{figure}[t!]
\setlength\figureheight{0.25\textwidth}
\setlength\figurewidth{0.3\textwidth}
\centering
\scriptsize
 
\begin{subfigure}[t]{0.3\textwidth}
\centering
\scriptsize
\begin{tikzpicture}
\def\xl{0}
\def\xu{5000}
\def\yl{0.40}
\def\yu{0.52}

\begin{axis}[%
width=\figurewidth,
height=\figureheight,
scale only axis,
xmin={\xl-(\xu-\xl)/5},
xmax={\xu+(\xu-\xl)/5},
ymin={\yl-(\yu-\yl)/5},
ymax={\yu+(\yu-\yl)*0.05},
hide axis,
axis background/.style={fill=white!100}
]
\draw [->] (axis cs: \xl,\yl) -- (axis cs: {\xu+(\xu-\xl)*0.05},\yl);
\draw [->] (axis cs: \xl,\yl) -- (axis cs: \xl,{\yu+(\yu-\yl)*0.05});
\node at (axis cs: {(\xu+(\xu-\xl)*0.05+\xl)/2},{\yl-0.16*(\yu-\yl)}) {$n$};
\node[rotate=90] at (axis cs: {\xl-0.165*(\xu-\xl)},{(\yu+(\yu-\yl)*0.05+\yl)/2}) {$\hat{\eps}_n$};
\foreach \yValue in {0.42,0.49} {
    \edef\temp{\noexpand\draw [-] ({\xl+(\xu-\xl)/100},\yValue) -- (\xl,\yValue) node[left] {\yValue};} 
    \temp
}
\foreach \xValue in {0,1000,2000,3000,4000,5000} {
	\edef\temp{\noexpand\draw [-] (\xValue,{\yl+(\yu-\yl)/100}) -- (\xValue,\yl) node[below] {\xValue};}    
    \temp
}
\addplot [
color=blue,
solid,
mark options={solid},
line width=1.5pt,
forget plot
]
table[row sep=crcr]{
100 0.5105180044902877 \\
644 0.4618272279200647 \\
1189 0.4468362265447278 \\
1733 0.4378678635984564 \\
2278 0.4314714530723095 \\
2822 0.4265274061822768 \\
3367 0.42249342252377287 \\
3911 0.41910174027509495 \\
4456 0.4161696013335309 \\
5000 0.4135977676403563 \\
};
\addplot [
color=red,
solid,
mark options={solid},
line width=1.5pt,
]
table[row sep=crcr]{
100 0.509749 \\
644 0.459700 \\
1189 0.447607 \\
1733 0.437894 \\
2278 0.431985 \\
2822 0.424900 \\
3367 0.423074 \\
3911 0.420404 \\
4456 0.415974 \\
5000 0.413030 \\
};
\end{axis}
\end{tikzpicture}%
\caption{
The blue line is the fit $\frac{0.6541}{n^{0.05}}$ and the red line is $\hat{\eps}_n$. 
}
\end{subfigure}
\hspace*{0.04\textwidth}
\begin{subfigure}[t]{0.3\textwidth}
\centering
\scriptsize
\begin{tikzpicture}
\def\xl{0}
\def\xu{5000}
\def\yl{0.43}
\def\yu{0.70}

\begin{axis}[%
width=\figurewidth,
height=\figureheight,
scale only axis,
xmin={\xl-(\xu-\xl)/5},
xmax={\xu+(\xu-\xl)/5},
ymin={\yl-(\yu-\yl)/5},
ymax={\yu+(\yu-\yl)*0.05},
hide axis,
axis background/.style={fill=white!100}
]
\draw [->] (axis cs: \xl,\yl) -- (axis cs: {\xu+(\xu-\xl)*0.05},\yl);
\draw [->] (axis cs: \xl,\yl) -- (axis cs: \xl,{\yu+(\yu-\yl)*0.05});
\node at (axis cs: {(\xu+(\xu-\xl)*0.05+\xl)/2},{\yl-0.16*(\yu-\yl)}) {$n$};
\node[rotate=90] at (axis cs: {\xl-0.16*(\xu-\xl)},{(\yu+(\yu-\yl)*0.05+\yl)/2}) {$\eps_n^*$};
\foreach \yValue in {0.45,0.65} {
    \edef\temp{\noexpand\draw [-] ({\xl+(\xu-\xl)/100},\yValue) -- (\xl,\yValue) node[left] {\yValue};} 
    \temp
}
\foreach \xValue in {0,1000,2000,3000,4000,5000} {
	\edef\temp{\noexpand\draw [-] (\xValue,{\yl+(\yu-\yl)/100}) -- (\xValue,\yl) node[below] {\xValue};}    
    \temp
}
\addplot [
color=blue,
solid,
mark options={solid},
line width=1.5pt,
forget plot
]
table[row sep=crcr]{
100 0.5556510885972593 \\
644 0.4972689052922075 \\
1189 0.47942386056165354 \\
1733 0.4687786589913229 \\
2278 0.4612005735642463 \\
2822 0.4553514216272993 \\
3367 0.4505843304065218 \\
3911 0.44658005046187366 \\
4456 0.4431211207493321 \\
5000 0.4400893842346224 \\
};
\addplot [
color=red,
solid,
mark options={solid},
line width=1.5pt,
]
table[row sep=crcr]{
100 0.680498 \\
644 0.479771 \\
1189 0.474747 \\
1733 0.468762 \\
2278 0.458361 \\
2822 0.461102 \\
3367 0.449320 \\
3911 0.444848 \\
4456 0.442129 \\
5000 0.441219 \\
};
\end{axis}
\end{tikzpicture}%
\caption{
The blue line is the fit $\frac{0.7312}{n^{0.06}}$ and the red line is $\eps_n^*$. 
}
\end{subfigure}
\hspace*{0.04\textwidth}
\begin{subfigure}[t]{0.3\textwidth}
\centering
\scriptsize
\begin{tikzpicture}
\def\xl{0.02}
\def\xu{0.93}
\def\yl{0.1}
\def\yu{0.5}

\begin{axis}[%
width=\figurewidth,
height=\figureheight,
scale only axis,
xmin={\xl-(\xu-\xl)/5},
xmax={\xu+(\xu-\xl)/5},
ymin={\yl-(\yu-\yl)/5},
ymax={\yu+(\yu-\yl)*0.05},
hide axis,
axis background/.style={fill=white!100}
]
\draw [->] (axis cs: \xl,\yl) -- (axis cs: {\xu+(\xu-\xl)*0.05},\yl);
\draw [->] (axis cs: \xl,\yl) -- (axis cs: \xl,{\yu+(\yu-\yl)*0.05});
\node at (axis cs: {(\xu+(\xu-\xl)*0.05+\xl)/2},{\yl-0.16*(\yu-\yl)}) {$\eps_n$};
\node[rotate=90] at (axis cs: {\xl-0.16*(\xu-\xl)},{(\yu+(\yu-\yl)*0.05+\yl)/2}) {$\text{err}(n,\eps_n,u_n)$};
\foreach \yValue in {0.15,0.45} {
    \edef\temp{\noexpand\draw [-] ({\xl+(\xu-\xl)/100},\yValue) -- (\xl,\yValue) node[left] {\yValue};} 
    \temp
}
\foreach \xValue in {0.2,0.4,0.6,0.8} {
	\edef\temp{\noexpand\draw [-] (\xValue,{\yl+(\yu-\yl)/100}) -- (\xValue,\yl) node[below] {\xValue};}    
    \temp
}
\addplot [
color=blue,
solid,
mark options={solid},
line width=1.5pt,
forget plot
]
table[row sep=crcr]{
0.0564    0.311140 \\
0.0785    0.293246 \\
0.1007    0.275495 \\
0.1228    0.259083 \\
0.1449    0.244105 \\
0.1671    0.226729 \\
0.1892    0.209286 \\
0.2113    0.191612 \\
0.2335    0.177896 \\
0.2556    0.166826 \\
0.2777    0.160687 \\
0.2999    0.158511 \\
0.3220    0.160818 \\
0.3441    0.169800 \\
0.3663    0.189866 \\
0.3884    0.227872 \\
0.4105    0.291495 \\
0.4327    0.359309 \\
0.4548    0.403903 \\
0.4769    0.424940 \\
0.4991    0.433497 \\
0.5212    0.436719 \\
0.5433    0.438319 \\
0.5655    0.438780 \\
0.5876    0.438991 \\
0.6097    0.438938 \\
0.6319    0.439002 \\
0.6540    0.439057 \\
0.6761    0.439016 \\
0.6983    0.438981 \\
0.7204    0.438989 \\
0.7425    0.438989 \\
0.7646    0.438989 \\
0.7868    0.438989 \\
0.8089    0.438989 \\
0.8310    0.438989 \\
0.8532    0.438989 \\
0.8753    0.438989 \\
0.8974    0.438989 \\
0.9196    0.438989 \\
};
\addplot [
color=black,
dashed,
line width=1pt,
forget plot
]
table[row sep=crcr]{
0.0564    0.14679 \\
0.0785    0.15261\\
0.1007    0.13815\\
0.1228    0.12920\\
0.1449    0.12467\\
0.1671    0.12699\\
0.1892    0.12154\\
0.2113    0.11503\\
0.2335    0.11294\\
0.2556    0.11545\\
0.2777    0.12027\\
0.2999    0.12951\\
0.3220    0.14081\\
0.3441    0.15624\\
0.3663    0.17705\\
0.3884    0.21127\\
0.4105    0.27427\\
0.4327    0.34495\\
0.4548    0.39608\\
0.4769    0.41989\\
0.4991    0.42839\\
0.5212    0.43150\\
0.5433    0.43319\\
0.5655    0.43350\\
0.5876    0.43438\\
0.6097    0.43429\\
0.6319    0.43449\\
0.6540    0.43449\\
0.6761    0.43459\\
0.6983    0.43469\\
0.7204    0.43478\\
0.7425    0.43478\\
0.7646    0.43478\\
0.7868    0.43478\\
0.8089    0.43478\\
0.8310    0.43478\\
0.8532    0.43478\\
0.8753    0.43478\\
0.8974    0.43478\\
0.9196    0.43478\\
};
\addplot [
color=black,
dashed,
line width=1pt,
forget plot
]
table[row sep=crcr]{
0.0564    0.49935\\
0.0785    0.47010\\
0.1007    0.44504\\
0.1228    0.42029\\
0.1449    0.39370\\
0.1671    0.34984\\
0.1892    0.30800\\
0.2113    0.27669\\
0.2335    0.25946\\
0.2556    0.22387\\
0.2777    0.20847\\
0.2999    0.19556\\
0.3220    0.18905\\
0.3441    0.18659\\
0.3663    0.20426\\
0.3884    0.24740\\
0.4105    0.31084\\
0.4327    0.37244\\
0.4548    0.41044\\
0.4769    0.43144\\
0.4991    0.43875\\
0.5212    0.44222\\
0.5433    0.44381\\
0.5655    0.44355\\
0.5876    0.44391\\
0.6097    0.44392\\
0.6319    0.44432\\
0.6540    0.44431\\
0.6761    0.44420\\
0.6983    0.44391\\
0.7204    0.44400\\
0.7425    0.44400\\
0.7646    0.44400\\
0.7868    0.44400\\
0.8089    0.44400\\
0.8310    0.44400\\
0.8532    0.44400\\
0.8753    0.44400\\
0.8974    0.44400\\
0.9196    0.4440\\
};
\addplot [
color=red,
solid,
mark options={solid},
line width=1.5pt,
forget plot
]
table[row sep=crcr]{
0.4130300287265496    0.1 \\
0.4130300287265496    0.2 \\
0.4130300287265496    0.3 \\
0.4130300287265496    0.4 \\
0.4130300287265496    0.5 \\
};
\addplot [
color=black,
solid,
mark options={solid},
line width=1.5pt,
forget plot
]
table[row sep=crcr]{
0.4412185112523447    0.1 \\
0.4412185112523447    0.2 \\
0.4412185112523447    0.3 \\
0.4412185112523447    0.4 \\
0.4412185112523447    0.5 \\
};
\end{axis}
\end{tikzpicture}%
\caption{
Error \eqref{eq:numExp:error} for $n = 5000$. The blue line is the average $\text{err}(n,\eps_n,u_n)$ and the dashed black lines are the 10\% and 90\% quantiles. The red and black vertical lines respectively indicate $\hat{\eps}_n$ and $\eps_n^*$.
}
\end{subfigure}

\caption{
Plots of the errors between the discrete and continuum minimizers with the setting described in Section \ref{subsec:criticalBoundary}.
\label{fig:error}	
}
\end{figure}

\subsection{Uniform Bounds on Eigenfunctions} \label{subsec:linear}

Proposition~\ref{prop:Proofs:Back:DisReg:EVecAlt} implies that we can upper bound the $\Lp{\infty}$ norm of the discrete eigenvectors $\psi_{n,k}$ by 
\begin{equation} \label{eq:numExp:lambda}
\|\psi_{n,k}\|_{\Lp{\infty}}\leq C\lambda_{k}^{d+1}
\end{equation}
when $k\in\{1,\hdots,\ceil{K_n}\}$ where $K_n=\alpha \eps_n^{-d/2} + 1$. We now investigate the following: on one hand, we want to see if the power of $\eps_n$ in the definition of $K_n$, namely $-d/2$, is the lowest we can get; and on the other hand, we are interested in the optimal power of $\lambda_{k}$ in \eqref{eq:numExp:lambda}.

We choose the uniform measure on $[0,1]^2$ with periodic boundary conditions and the kernel function $\eta(t) = 1$ if $t\leq 1$ and $\eta(t) = 0$ else. We proceed as follows: given a number of points ranging from $n = 400$ to $n = 5200$, we seek the smallest $\eps_n$ such that the graph is connected. We then compute $\lb \log\l \lambda_{k} \r, \log \l \|\psi_{n,k}\|_{\Lp{\infty}} \r \rb_{k=1}^{k_n^{(i)}}$ where $k_n^{(1)}  = \alpha_1 \eps_n^{-d/4}$, $k_n^{(2)}  = \alpha_2 \eps_n^{-d/2}$, $k_n^{(3)}  = \alpha_3 \eps_n^{-d}$, $k_n^{(4)}  = n$
and $\alpha_i$ for $1 \leq i \leq 3$ are constants. While we took the arbitrary choice of $\alpha_i = 4$ for $1 \leq i \leq 3$, empirical trials have shown that the overall conclusion does not change significantly. By re-sampling the points for each $n$, we repeat the experiments one hundred times and average the results.

\begin{figure}[t!]
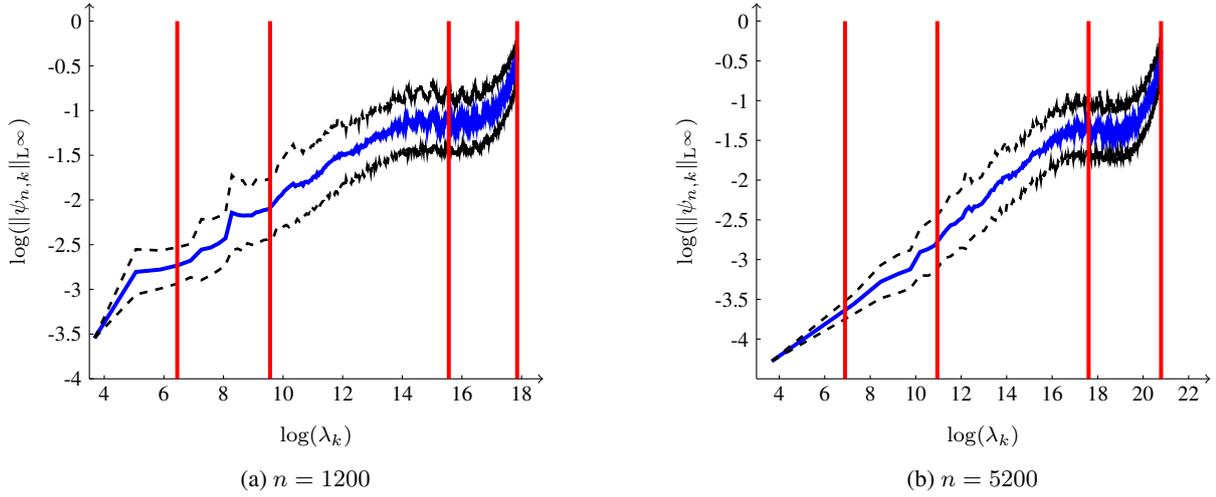

\setlength\figureheight{0.35\textwidth}
\setlength\figurewidth{0.47\textwidth}
\centering
\scriptsize
 
\begin{subfigure}[t]{0.47\textwidth}
\centering
\scriptsize
\input{eigenPairs1200.tikz}
\caption{
$n = 1200$
}
\end{subfigure}
\hspace*{0.04\textwidth}
\begin{subfigure}[t]{0.47\textwidth}
\centering
\scriptsize
\input{eigenPairs5200.tikz}
\caption{
$n = 5200$
}
\end{subfigure}

\caption{
Plots of $\log \l \|\psi_{n,k}\|_{\Lp{\infty}} \r$ versus $\log\l \lambda_{k} \r$ for $1 \leq k \leq n$ with the setting described in Section \ref{subsec:linear}. The blue line corresponds to the average of $\log \l \|\psi_{n,k}\|_{\Lp{\infty}} \r$, the dashed black lines are the mean plus/minus the standard deviation and the red lines indicate the values of $\log \l \|\lambda_{k_n^{(i),*}}\|_{\Lp{\infty}} \r$ for $1 \leq i \leq 4$ from left to right. \ref{subsec:criticalBoundary}.
\label{fig:eigenvalues}	
}
\end{figure}

\begin{figure}[t!]
\setlength\figureheight{0.25\textwidth}
\setlength\figurewidth{0.3\textwidth}
\centering
\scriptsize
 
\begin{subfigure}[t]{0.3\textwidth}
\centering
\scriptsize
\begin{tikzpicture}
\def\xl{8.3}
\def\xu{11.2}
\def\yl{-2.82}
\def\yu{-1.5}

\begin{axis}[%
width=\figurewidth,
height=\figureheight,
scale only axis,
xmin={\xl-(\xu-\xl)/5},
xmax={\xu+(\xu-\xl)/5},
ymin={\yl-(\yu-\yl)/5},
ymax={\yu+(\yu-\yl)*0.05},
hide axis,
axis background/.style={fill=white!100}
]
\draw [->] (axis cs: \xl,\yl) -- (axis cs: {\xu+(\xu-\xl)*0.05},\yl);
\draw [->] (axis cs: \xl,\yl) -- (axis cs: \xl,{\yu+(\yu-\yl)*0.05});
\node at (axis cs: {(\xu+(\xu-\xl)*0.05+\xl)/2},{\yl-0.12*(\yu-\yl)}) {$\log ( \Vert \psi_{n,k_n^{(2),*}} \Vert_{\Lp{\infty}} )$};
\node[rotate=90] at (axis cs: {\xl-0.12*(\xu-\xl)},{(\yu+(\yu-\yl)*0.05+\yl)/2}) {$\log(\lambda_{k_n^{(2),*}})$};
\foreach \yValue in {-2.6,-1.6} {
    \edef\temp{\noexpand\draw [-] ({\xl+(\xu-\xl)/100},\yValue) -- (\xl,\yValue) node[left] {\yValue};} 
    \temp
}
\foreach \xValue in {8.5,11} {
	\edef\temp{\noexpand\draw [-] (\xValue,{\yl+(\yu-\yl)/100}) -- (\xValue,\yl) node[below] {\xValue};}    
    \temp
}
\addplot [
color=blue,
solid,
mark options={solid},
line width=1.5pt,
]
table[row sep=crcr]{
8.471544678415432   -1.2540569433300188 \\
9.220931577298254   -1.7233199981534746 \\
9.564632091151571   -1.9385438742522458 \\
9.857839039535323   -2.122148883225771 \\
10.113505782555093   -2.282246366948902 \\
10.191947208861656   -2.3313660718546085 \\
10.41034579279164   -2.468126384607456 \\
10.478148896143002   -2.510584414568487 \\
10.607225938418145   -2.5914119380492266 \\
10.668769255751652   -2.6299501152430693 \\
10.786450255797519   -2.7036414830245574 \\
10.786450255797519   -2.7036414830245574 \\
10.950926452271462   -2.806635814147998 \\
};
\addplot [
color=red,
only marks,
mark size=3.0pt,
mark=x,
mark options={solid},
line width=1.5pt,
]
table[row sep=crcr]{
8.471544678415432   -1.5566685661277149 \\
9.220931577298254   -1.8463628343359824 \\
9.564632091151571   -2.088603803154077 \\
9.857839039535323   -2.129675348626022 \\
10.113505782555093   -2.1951743774725325 \\
10.191947208861656   -2.3334050732939327 \\
10.41034579279164   -2.429573656463954 \\
10.478148896143002   -2.5039722400392765 \\
10.607225938418145   -2.6023849755115283 \\
10.668769255751652   -2.655875802569578 \\
10.786450255797519   -2.7094539466160485 \\
10.786450255797519   -2.7761405886877966 \\
10.950926452271462   -2.7365904227770645 \\
};
\end{axis}
\end{tikzpicture}%
\caption{
The blue line is the fit $4.05 - 0.63 \log(\Vert \psi_{n,k_n^{(2),*}} \Vert)$ and the red line is $\lambda_{k_n^{(2),*}}$.  
}
\end{subfigure}
\hspace*{0.04\textwidth}
\begin{subfigure}[t]{0.3\textwidth}
\centering
\scriptsize
\begin{tikzpicture}
\def\xl{13}
\def\xu{18}
\def\yl{-1.3}
\def\yu{-0.9}

\begin{axis}[%
width=\figurewidth,
height=\figureheight,
scale only axis,
xmin={\xl-(\xu-\xl)/5},
xmax={\xu+(\xu-\xl)/5},
ymin={\yl-(\yu-\yl)/5},
ymax={\yu+(\yu-\yl)*0.05},
hide axis,
axis background/.style={fill=white!100}
]
\draw [->] (axis cs: \xl,\yl) -- (axis cs: {\xu+(\xu-\xl)*0.05},\yl);
\draw [->] (axis cs: \xl,\yl) -- (axis cs: \xl,{\yu+(\yu-\yl)*0.05});
\node at (axis cs: {(\xu+(\xu-\xl)*0.1+\xl)/2},{\yl-0.12*(\yu-\yl)}) {$\log ( \Vert \psi_{n,k_n^{(3),*}} \Vert_{\Lp{\infty}} )$};
\node[rotate=90] at (axis cs: {\xl-0.12*(\xu-\xl)},{(\yu+(\yu-\yl)*0.05+\yl)/2}) {$\log(\lambda_{k_n^{(3),*}})$};
\foreach \yValue in {-1.25,-0.95} {
    \edef\temp{\noexpand\draw [-] ({\xl+(\xu-\xl)/100},\yValue) -- (\xl,\yValue) node[left] {\yValue};} 
    \temp
}
\foreach \xValue in {13.5,18} {
	\edef\temp{\noexpand\draw [-] (\xValue,{\yl+(\yu-\yl)/100}) -- (\xValue,\yl) node[below] {\xValue};}    
    \temp
}
\addplot [
color=blue,
solid,
mark options={solid},
line width=1.5pt,
]
table[row sep=crcr]{
13.332381607423294   -0.921019815668136 \\
14.072748195350343   -0.9828226002805083 \\
15.561352883072093   -1.1070852512272855 \\
15.008607509043555   -1.0609443207022886 \\
15.737124653341217   -1.1217579620654312 \\
16.472944001889108   -1.1831811616715193 \\
16.213946700231215   -1.1615611227467946 \\
16.387969454210474   -1.1760878328350297 \\
17.059921617831947   -1.232179660201782 \\
17.2150380865237   -1.2451281506667957 \\
16.880929917197363   -1.2172381627029627 \\
17.032438362127355   -1.2298854700526651 \\
17.604025357655182   -1.2775992223314485 \\
};
\addplot [
color=red,
only marks,
mark size=3.0pt,
mark=x,
mark options={solid},
line width=1.5pt,
]
table[row sep=crcr]{
13.332381607423294   -0.9313156265672551 \\
14.072748195350343   -1.0300294427839842 \\
15.561352883072093   -1.0656442024342059 \\
15.008607509043555   -1.1363890575498448 \\
15.737124653341217   -1.109883714182371 \\
16.472944001889108   -1.0961466779492712 \\
16.213946700231215   -1.136096710207725 \\
16.387969454210474   -1.1932626570934897 \\
17.059921617831947   -1.2129107615556576 \\
17.2150380865237   -1.2228249089043661 \\
16.880929917197363   -1.244635961594016 \\
17.032438362127355   -1.2541350131356905 \\
17.604025357655182   -1.2758136090465209 \\
};
\end{axis}
\end{tikzpicture}%
\caption{
The blue line is the fit $0.2 - 0.08 \log(\Vert \psi_{n,k_n^{(3),*}} \Vert)$ and the red line is $\lambda_{k_n^{(3),*}}$. 
}
\end{subfigure}
\hspace*{0.04\textwidth}
\begin{subfigure}[t]{0.3\textwidth}
\centering
\scriptsize
\begin{tikzpicture}
\def\xl{15}
\def\xu{21}
\def\yl{-0.38}
\def\yu{-0.29}

\begin{axis}[%
width=\figurewidth,
height=\figureheight,
scale only axis,
xmin={\xl-(\xu-\xl)/5},
xmax={\xu+(\xu-\xl)/5},
ymin={\yl-(\yu-\yl)/5},
ymax={\yu+(\yu-\yl)*0.05},
hide axis,
axis background/.style={fill=white!100}
]
\draw [->] (axis cs: \xl,\yl) -- (axis cs: {\xu+(\xu-\xl)*0.05},\yl);
\draw [->] (axis cs: \xl,\yl) -- (axis cs: \xl,{\yu+(\yu-\yl)*0.05});
\node at (axis cs: {(\xu+(\xu-\xl)*0.1+\xl)/2},{\yl-0.12*(\yu-\yl)}) {$\log ( \Vert \psi_{n,k_n^{(4),*}} \Vert_{\Lp{\infty}} )$};
\node[rotate=90] at (axis cs: {\xl-0.12*(\xu-\xl)},{(\yu+(\yu-\yl)*0.05+\yl)/2}) {$\log(\lambda_{k_n^{(4),*}})$};
\foreach \yValue in {-0.37,-0.3} {
    \edef\temp{\noexpand\draw [-] ({\xl+(\xu-\xl)/100},\yValue) -- (\xl,\yValue) node[left] {\yValue};} 
    \temp
}
\foreach \xValue in {15.5,21} {
	\edef\temp{\noexpand\draw [-] (\xValue,{\yl+(\yu-\yl)/100}) -- (\xValue,\yl) node[below] {\xValue};}    
    \temp
}
\addplot [
color=blue,
solid,
mark options={solid},
line width=1.5pt,
]
table[row sep=crcr]{
15.653676966598418   -0.3699385869129731 \\
17.04247602435124   -0.35403829774242573 \\
17.85424044287372   -0.3447444485560981 \\
18.4300215584866   -0.3381523600994723 \\
18.876558801819492   -0.33303997810671626 \\
19.24136865849808   -0.32886328877136983 \\
19.5497891118496   -0.32533219882208986 \\
19.816941212717726   -0.3222735881234365 \\
20.05257674897675   -0.3195758089562776 \\
20.26335335051233   -0.31716263845757076 \\
20.45401917551613   -0.31497971520774737 \\
20.62807981653518   -0.31298690385756406 \\
20.788197289586122   -0.3111537271222005 \\
};
\addplot [
color=red,
only marks,
mark size=3.0pt,
mark=x,
mark options={solid},
line width=1.5pt,
]
table[row sep=crcr]{
15.653676966598418   -0.3426881730956601 \\
17.04247602435124   -0.3408879622222072 \\
17.85424044287372   -0.32965835970053553 \\
18.4300215584866   -0.3255109478911788 \\
18.876558801819492   -0.33889839903487484 \\
19.24136865849808   -0.3322643365633724 \\
19.5497891118496   -0.32482889303079693 \\
19.816941212717726   -0.3231200685385129 \\
20.05257674897675   -0.32500547257786294 \\
20.26335335051233   -0.3120709242530029 \\
20.45401917551613   -0.31619724901672647 \\
20.62807981653518   -0.30094861032534664 \\
20.788197289586122   -0.3212933628046467 \\
};
\end{axis}
\end{tikzpicture}%
\caption{
The blue line is the fit $-0.55 + 0.01 \log(\Vert \psi_{n,k_n^{(4),*}} \Vert)$ and the red line is $\lambda_{k_n^{(4),*}}$.  
}
\end{subfigure}

\caption{
Plots of $\log \l \|\psi_{n,k_n^{(i),*}}\|_{\Lp{\infty}} \r$ versus $\log\l \lambda_{k_n^{(i),*}} \r$ for $2 \leq i \leq 4$ with the setting described in Section \ref{subsec:linear}.
\label{fig:linear}	
}
\end{figure}
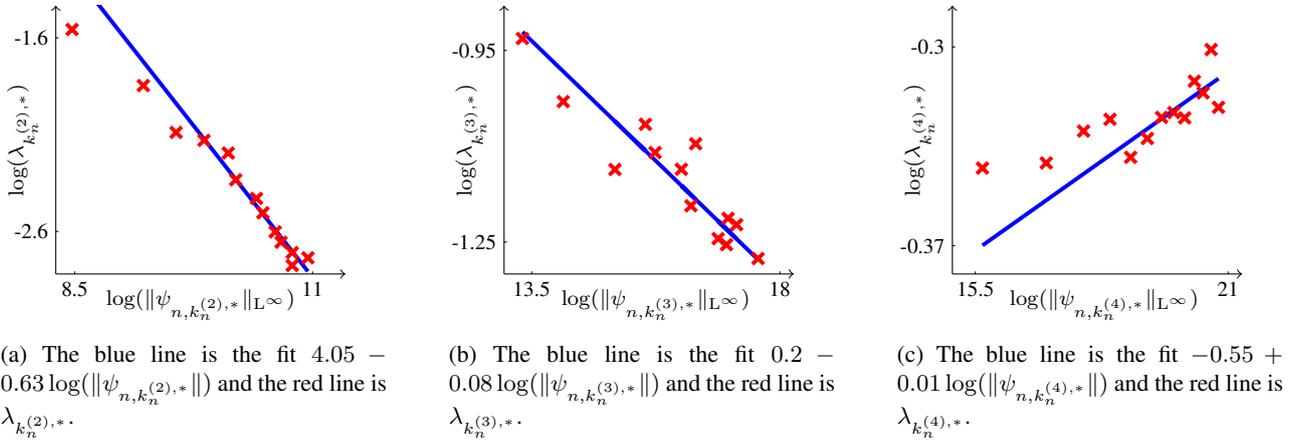

From Figure \ref{fig:eigenvalues} there appears to be different regimes of growth for the eigenpairs depending on the value of $1 \leq k \leq n$. In fact, we see that below $\log(\lambda_{k_n^{(3)}})$, $\log(\Vert \psi_{n,k} \Vert_{\Lp{\infty}})$ seems to be increasing monotonically, while it unexpectedly first decreases and then sharply increases from $\log(\lambda_{k_n^{(3)}})$ to $\log(\lambda_{k_n^{(4)}})$. It is the subject of future research to provide explanations for the latter phenomena. 

Let us define 
\[
k_n^{(i),*} = \argmax_{1 \leq k \leq k_n^{(i)}} \log \l \Vert \psi_{n,k} \Vert_{\Lp{\infty}} \r
\]
and, using the seven largest values of $n$, for $1 \leq i \leq 4$, we will be computing the best linear fits for the points 
\[
\lb( \log \l \lambda_{n,k_n^{(i),*}} \r, \log \l \Vert \psi_{k_n^{(i),*}} \Vert_{\Lp{\infty}} \r\rb_{n=400}^{5200}.
\]

For $k_n^{(i)}$ with $1 \leq i \leq 2$, the linear fits in the log-log domain appear to be very accurate as depicted in Figure \ref{fig:linear}, so we are able to confirm the theoretical guarantees of Proposition \ref{prop:Proofs:Back:DisReg:EVecAlt}.
In particular, we obtain
\[
\Vert \psi_{n,k_n^{(2),*}} \Vert_{\Lp{\infty}} \approx C \lambda_{k_n^{(2),*}}^{-0.63}
\]
and $\lambda_{k_n^{(2),*}}^{-0.63} \leq \lambda_{k_n^{(2),*}}^{d+1}$ since $\lambda_k \geq 1$ (in fact, we have $\{\lambda_k\}_{k=1}^\infty = \{ 4 \pi^2 k^2 \spaceBar k \in \bbN \}$ in this particular setting). However, it shows that the bound is not sharp for the flat torus.

The situation for the regime $k_n^{(3)}$ is different and we obtain that \[
\Vert \psi_{n,k_n^{(3),*}} \Vert_{\Lp{\infty}} \approx C \lambda_{k_n^{(3),*}}^{-0.08}.
\]
The much smaller exponent in the above compared to the $k_n^{(2),*}$ setting indicates that we seem to be switching from one regime of growth of $\Vert \psi_{n,k} \Vert_{\Lp{\infty}}$ to another one.

Finally, we note that the linear fit in Figure \ref{fig:linear} for the $k_n^{(4)}$ regime yields
\[
\Vert \psi_{n,k_n^{(4),*}} \Vert_{\Lp{\infty}} \approx C \lambda_{k_n^{(4),*}}^{0.01}
\]
and, making the crude approximation $0 \approx 0.01$, this confirms our intuition that on the flat torus, the graph Laplacian should have uniformly bounded $\Lp{\infty}$-norms of eigenfunctions: indeed, the continuum eigenfunctions are uniformly bounded in $\Lp{\infty}$, so we expect the same behaviour for their discrete counterparts. This suggests that one should be able to pick $\alpha = 0$ in Theorem \ref{thm:Main:Res:ConsFracLap} yielding the (almost) optimal Sobolev bound $s > d/2 + 2$ as discussed in Remark~\ref{rem:Main:Res:Sobolev}. This conclusion also suggests that, on the flat torus at least, one should be able to go above $\eps_n^{-d/2}$ in Proposition \ref{prop:Proofs:Back:DisReg:EVecAlt}.

\section*{Acknowledgements}

The authors would like to thank Nicol\'as Garc\'ia Trillos for his comments and insights on this paper.
MT was supported by the European Research Council under the European Union’s Horizon 2020 research and innovation programme Grant Agreement No. 777826 (NoMADS).

\bibliography{references}{}
\bibliographystyle{plain}

\end{document}